\newtheorem{example}{Example}
\newtheorem{remark}{Remark}
\newtheorem{definition}{Definition}
\newtheorem{proposition}{Proposition}
\newtheorem{lemma}{Lemma}
\newtheorem{theorem}{Theorem}
\newtheorem{corollary}{Corollary}
\tikzset{
  ncone/.pic={
	\draw (0,0)--(0,0.2);
  }
}
\tikzset{
  nctwo/.pic={
    \draw (0,0)--(0,0.2);
	\draw (0.1,0)--(0.1,0.2);
  }
}
\tikzset{
  nctwoW/.pic={
    \draw (0,0.2)--(0,0)--(0.1,0)--(0.1,0.2);
  }
}
\tikzset{
  nctwoWW/.pic={
    \draw (0,0.2)--(0,0)--(0.2,0)--(0.2,0.2);
  }
}
\tikzset{
  ncthreeWW/.pic={
    \draw (0,0.2)--(0,0)--(0.3,0)--(0.3,0.2);
	\draw (0.2,0)--(0.2,0.2);
  }
}
\tikzset{
  ncthree/.pic={
    \draw (0,0)--(0,0.2);
	\draw (0.1,0)--(0.1,0.2);
	\draw (0.2,0)--(0.2,0.2);
  }
}
\tikzset{
  ncthreeW/.pic={
    \draw (0,0.2)--(0,0)--(0.2,0)--(0.2,0.2);
	\draw (0.1,0)--(0.1,0.2);
  }
}
\tikzset{
  ncfour/.pic={
    \draw (0,0)--(0,0.2);
	\draw (0.1,0)--(0.1,0.2);
	\draw (0.2,0)--(0.2,0.2);
	\draw (0.3,0)--(0.3,0.2);
  }
}
\tikzset{
  ncfive/.pic={
    \draw (0,0)--(0,0.2);
	\draw (0.1,0)--(0.1,0.2);
	\draw (0.2,0)--(0.2,0.2);
	\draw (0.3,0)--(0.3,0.2);
	\draw (0.4,0)--(0.4,0.2);
  }
}
\tikzset{
  ncfourW/.pic={
    \draw (0,0.2)--(0,0)--(0.3,0)--(0.3,0.2);
	\draw (0.1,0)--(0.1,0.2);
	\draw (0.2,0)--(0.2,0.2);
  }
}
\tikzset{
  ncfiveW/.pic={
    \draw (0,0.2)--(0,0)--(0.4,0)--(0.4,0.2);
	\draw (0.1,0)--(0.1,0.2);
	\draw (0.2,0)--(0.2,0.2);
	\draw (0.3,0)--(0.3,0.2);
  }
}
\tikzset{
  nconeinsidetwoWW/.pic={
    \path (0,0) pic {nctwoWW}; \path (0.1,0.1) pic {ncone};
  }
}
\tikzset{
  nconeinsidethreeWW/.pic={
    \path (0,0) pic {ncthreeWW}; \path (0.1,0.1) pic {ncone};
  }
}
\tikzset{
  nconeinsidethreerightWW/.pic={
    \draw (0,0.2)--(0,0)--(0.3,0)--(0.3,0.2);
    \draw (0.1,0)--(0.1,0.2); \draw (0.2,0.1)--(0.2,0.3);
  }
}
\tikzset{
  nconeinsidethreeleftWW/.pic={
    \draw (0,0.2)--(0,0)--(0.3,0)--(0.3,0.2);
    \draw (0.2,0)--(0.2,0.2); \draw (0.1,0.1)--(0.1,0.3);
  }
}
\tikzset{
  nctwoWWW/.pic={
    \draw (0,0.2)--(0,0)--(0.3,0)--(0.3,0.2);
  }
}
\tikzset{
  nconeoneinsidetwoWW/.pic={
	\path (0,0) pic {ncone};
    \path (0.1,0) pic {nctwoWW}; 
	\path (0.2,0.1) pic {ncone};
  }
}
\long\def\ignore#1{}
\newcommand\fat[1]{\ThisStyle{\ooalign{%
  \kern.46pt$\SavedStyle#1$\cr\kern.33pt$\SavedStyle#1$\cr%
  \kern.2pt$\SavedStyle#1$\cr$\SavedStyle#1$}}}
\newcommand{\frontstick}{\,\raisebox{-1pt}{\begin{tikzpicture}
\draw [line width=1pt,] (0,0)--(0,0.25);
\end{tikzpicture}}\kern+2pt}
\newcommand{\ad}{\mop{ad}} 
\newcommand{\pl}{\ {\triangleright\hspace{-0.3cm}{-}}}
\font \sevenrm=cmr7
\font \fiverm=cmr5
\newcommand{\nc}{\newcommand}
\nc\smsc{0.8}
\def \restr#1{\mathstrut_{\textstyle |}\raise-6pt\hbox{$\scriptstyle #1$}}
\def \srestr#1{\mathstrut_{\scriptstyle |}\hbox to
-1.5pt{}\raise-4pt\hbox{$\scriptscriptstyle #1$}}
\nc{\mop}[1]{\mathop{\hbox {\rm #1} }\nolimits}
\nc{\gmop}[1]{\mathop{\hbox {\bf #1} }\nolimits}
\nc{\smop}[1]{\mathop{\hbox {\sevenrm #1} }\nolimits}
\nc{\ssmop}[1]{\mathop{\hbox {\fiverm #1} }\nolimits}
\nc{\mopl}[1]{\mathop{\hbox {\rm #1} }\limits}
\def\dbar{d\hskip-3pt \raise 4pt\hbox{-}}
\nc{\smopl}[1]{\mathop{\hbox {\sevenrm #1} }\limits}
\nc{\ssmopl}[1]{\mathop{\hbox {\fiverm #1} }\limits}
 \newcommand{\lbutcher}{{\raise 3.9pt\hbox{$\circ$}}\hskip -1.9pt{\scriptstyle \searrow}\,}
 \newcommand{\rbutcher}{{\raise 3.9pt\hbox{$\circ$}}\hskip -1.9pt{\scriptstyle \swarrow}}
 \newcommand{\lbc}{{[\hskip -2.9pt [\hskip -2.9pt [\hskip -2.9pt [}}
  \newcommand{\rbc}{{]\hskip -2.9pt ]\hskip -2.9pt ] \hskip -2.9pt ]}}
\title[Connection algebra]{\small{Algebraic aspects of connections:\\ 
from torsion, curvature, and post-Lie algebras to Gavrilov's double exponential\\ and special polynomials}}
\author[M.~J.~H.~Al-Kaabi]{Mahdi J.~Hasan~Al-Kaabi}
\address{Mathematics Dept., College of Science, Mustansiriyah University, Baghdad, Iraq.}
\email{mahdi.alkaabi@uomustansiriyah.edu.iq}
\author[K.~Ebrahimi-Fard]{Kurusch Ebrahimi-Fard}
\address{Dept.~of Mathematical Sciences, NTNU, Trondheim, Norway.}
\email{kurusch.ebrahimi-fard@ntnu.no}
\urladdr{https://folk.ntnu.no/kurusche/}
\author[D.~Manchon]{Dominique Manchon}
\address{Lab.~de Math\'ematiques Blaise Pascal, CNRS--Universit\'e Clermont-Auvergne, France.}
\email{Dominique.Manchon@uca.fr}
\urladdr{https://lmbp.uca.fr/~manchon/}
\author[H.~Munthe-Kaas]{Hans Munthe-Kaas}
\address{Dept.~of Mathematics, University of Bergen, Bergen, Norway.}       
\email{hans.munthe-kaas@uib.no}
\urladdr{http://hans.munthe-kaas.no}
\begin{document}

\begin{abstract}
Understanding the algebraic structure underlying a manifold with a general affine connection is a natural problem. In this context, A.~V.~Gavrilov introduced the notion of framed Lie algebra, consisting of a Lie bracket (the usual Jacobi bracket of vector fields) and a magmatic product without any compatibility relations between them. In this work we will show that an affine connection with curvature and torsion always gives rise to a post-Lie algebra as well as a $D$-algebra. The notions of torsion and curvature together with Gavrilov's special polynomials and double exponential are revisited in this post-Lie algebraic framework. We unfold the relations between the post-Lie Magnus expansion, the Grossman--Larson product and the $K$-map, $\alpha$-map and $\beta$-map, three particular functions introduced by Gavrilov with the aim of understanding the geometric and algebraic properties of the double-exponential, which can be understood as a geometric variant of the Baker--Campbell--Hausdorff formula. We propose a partial answer to a conjecture by Gavrilov, by showing that a particular class of geometrically special polynomials is generated by torsion and curvature. This approach unlocks many possibilities for further research such as numerical integrators and rough paths on Riemannian manifolds. 
\end{abstract}

\keywords{Post-Lie algebra; $D$-algebra; double exponential; framed Lie algebra; special polynomials; Magnus expansion; Grossman--Larson product; Baker--Campbell--Hausdorff formula.}


\maketitle


\tableofcontents


\section{Introduction}
\label{sec:intro}

Let $\mathcal M$ be a smooth manifold, and let $\mathcal{XM}=\mop{Der} C^\infty(\mathcal M)$ be the Lie algebra of vector fields on $\mathcal M$. An affine connection on $\mathcal M$ gives rise to a covariant derivative operator $\nabla$ on $\mathcal{XM}$. It is well-known that the induced binary product $\rhd$ defined by 
$$
	X\rhd Y:=\nabla_XY
$$ 
is left pre-Lie when the connection is flat and torsion free. This fact can be traced back to A.~Cayley's famous 1857 article on vector fields and rooted trees \cite{C1857}. See for example \cite{Burde2006, Manchon2011,McLMoMuVe2017}. An affine connection with constant torsion and vanishing curvature gives rise to a post-Lie algebra \cite{HA13}, a structure which appeared more recently in a paper by B.~Vallette on partition posets \cite{BV2007}. The closely related notion of $D$-algebra appeared independently in joint work by one of the present authors together with W.~Wright \cite{MKW08} in the context of numerical schemes for differential equations on Lie groups and homogeneous spaces. Pre-Lie algebras, also known as chronological algebras \cite{AG1981}, are rather well-studied in algebra, combinatorics and geometry. On the other hand, research on post-Lie and $D$-algebras is more recent. See for instance~\cite{AEM21, CEO20, ELM2015, FM2018, MQS2020, MunLun2011, HA13}.

\medskip

Understanding the algebraic structure underlying a manifold with a more general affine connection is a natural problem which attracted some attention \cite{Gavrilov2006, Gavrilov2008, Gavrilov2010, KN1963}. An obvious structure on the space $\mathcal{XM}$ is that of a so-called framed Lie algebra, consisting of a Lie bracket (the usual Jacobi bracket of vector fields) and a binary product (the right triangle $\rhd$) without any compatibility relations between them. This setting was studied in depth by A.~V.~Gavrilov in the 2006 work \cite{Gavrilov2006}. Our contribution aims at exhibiting a natural post-Lie algebra $\mathfrak g$ associated with these geometric data, and reinterpreting the main geometric notions in this framework, that is, torsion and curvature as well as Gavrilov's special polynomials and double exponential\footnote{The latter is a formal binary product on vector fields which can be understood as a geometric variant of the Baker--Campbell--Hausdorff formula. It is not associative in general, unless the connection is flat. Concrete examples can be found in applied differential geometry, for instance in the context of numerical schemes on manifolds \cite{GP2021}.}. To sum up, we advocate in the present paper the relevance of the post-Lie framework for a deeper and more refined understanding of Gavrilov's major findings on framed Lie algebras which are not necessarily post-Lie, including vector fields on a manifold endowed with an affine connection with curvature and torsion.\\

The paper is organized as follows. Section \ref{sec:postLie} is devoted to post-Lie and $D$-algebras. Several of the lengthier algebraic computations in the proofs of the algebraic statements in this section have been collected in Appendix \ref{app:proofs}. After the necessary background is recalled in Subsection \ref{ssec:reminders}, the free $D$-algebra as well as the free post-Lie algebra generated by a magmatic algebra $M$ are defined, in terms of the free unital associative algebra (the tensor algebra) $TM=\bigoplus_{k\ge 0}M^{{\textstyle\cdot} k}$ and the free Lie algebra $\mop{Lie}(M)$, respectively. The particular case of a free post-Lie and free $D$-algebra generated by a set $A$ is detailed in Paragraph \ref{ssec:freeDalg} using planar rooted trees and forests decorated by elements from $A$. In Appendix  \ref{ssec:multigrafting} we present briefly the notion of planar multi-grafting. Of particular interest is Equation \eqref{diamond-rhd} relating right Butcher product and left grafting on rooted trees. Reminders on the enveloping algebra of a post-Lie algebra (Subsection \ref{sect:enveloping}) and the post-Lie Magnus expansion (Subsection \ref{sec:plm}) are followed by a longer subsection on Gavrilov's $K$-map, $K: TM \to TM$, seen from a post-Lie algebra point of view (Subsection \ref{ssec:GavrilovK}). This map is re-interpreted as a linear automorphism of $TM$ mapping the Grossman--Larson product $*$ onto the defining product $\textstyle\cdot$ of $TM$ (Theorem \ref{thm:isoK}). An explicit non-recursive expression of the inverse map, $K^{-1}$, is given in terms of set partitions (Proposition \ref{Kinverse}). Paragraph \ref{par:Z} concludes this section by stating Equation \eqref{ZKchi} relating the $K$-map, the post-Lie Magnus expansion, $\chi$, and the logarithm $Z: y \mapsto \log^{\textstyle\cdot}\big(K(\exp^{\textstyle\cdot}(y)\big)$. 

\smallskip

Section \ref{sect:framed-pl} is devoted to Gavrilov's $\beta$-map \cite{Gavrilov2006}, obtained by pre-composing the aforementioned logarithm $Z$ with the canonical projection $p$ from the tensor algebra onto the enveloping algebra of a completed graded framed Lie algebra $(\widehat{\mathcal L},\rhd,\lbc.\,,.\rbc)$. This map is a keystone in the expression of Gavrilov's double exponential \cite{Gavrilov2006} considered in Section \ref{sec:double-exp}. A simple formula is obtained in terms of the $K$-map, the post-Lie Magnus expansion and the projection $p$ (Equation \eqref{beta-chi}).

\smallskip

In Section \ref{sec:covderiv}, we apply the mentioned algebraic results to the concrete setting of the framed Lie algebra $\mathcal{XM}$ of vector fields on a smooth manifold $\mathcal M$ endowed with an affine connection. Higher-order covariant derivatives are recast in the post-Lie framework in terms of the  $K$-map. We show in Proposition \ref{prop:g-post-lie} that the free Lie algebra $\mathfrak g$ (resp.~the tensor algebra $\mathcal A$ of $\mathcal{XM}$) over the ring $\mathcal R:=C^\infty(\mathcal M)$ is a post-Lie algebra (resp.~a $D$-algebra). As a consequence (Remark \ref{alt-beta}), we obtain in Equation \eqref{betachirho} an alternative expression of Gavrilov's $\beta$-map. The particular case of a flat connection with constant torsion, in which the framed Lie algebra $\mathcal{XM}$ itself is post-Lie, is detailed in Subsection~\ref{par:diff}.

\smallskip

Section \ref{sect:special} is devoted to Gavrilov's special polynomials \cite{Gavrilov2006}. We provide a partial answer to a conjecture put forward in  \cite{Gavrilov2006}, by showing that a natural (and rather broad) family of special polynomials can be expressed in terms of torsion, curvature and their higher-order covariant derivatives (Theorem \ref{thm:special}). An important intermediate result (Proposition \ref{prop:kernel}) expresses the kernel $\mathcal J$ of the action $\rho$ of $\mathfrak g$ by derivations on $C^\infty(\mathcal M)$ (which is an ideal for the Grossman--Larson Lie bracket) in terms of the so-called curvature elements, denoted $s(a.b)$ and introduced in Definition \ref{torsion-curvature-elts}. We also show that the kernel $\mathcal K$ of the action $\rhd$ of $\mathfrak g$ on $\mathcal{XM}$ is a Grossman--Larson ideal included in the kernel $\mathcal J$, and exhibit a nonzero element of $\mathcal K$ by means of the first Bianchi identity. The inclusion $\mathcal J \subset \mathcal K$ is in general strict, manifesting the presence of curvature.

\smallskip

Finally, Section \ref{sec:double-exp} discusses Gavrilov's double exponential, which we first describe heuristically by comparison of consecutive parallel transports. Then we express it in precise terms using the post-Lie Magnus expansion (Theorem \ref{thm:q}). 

\smallskip

We close the paper with a short synthesis of the results followed by a brief outlook\\

\noindent{\textbf{Acknowledgements:} This work was partially supported by the project \textsl{Pure Mathematics in Norway}, funded by Trond Mohn Foundation and Troms{\o} Research Foundation. MJHAK was funded by the Iraqi Ministry of Higher Education and Scientific Research and he would like to thank Mustanisiriyah University\footnote{https://uomustansiriyah.edu.iq}, College of Science, Mathematics Department for their continued support to carry out this work. KEF and HMK are supported by the Research Council of Norway through project 302831 \textsl{Computational Dynamics and Stochastics on Manifolds} (CODYSMA). KEF would also like to thank the Department of Mathematics at the Saarland University for warm hospitality during a sabbatical visit. DM is supported by Agence Nationale de la Recherche, projet \textsl{Combinatoire alg\'ebrique, r\'esurgence, probabilit\'es libres et op\'erades} CARPLO ANR20-CE40-0007.}

\section{Post-Lie and $D$-algebras}
\label{sec:postLie}

\subsection{Reminders on post-Lie and $D$-algebras}
\label{ssec:reminders}

Let $\mathbf k$ be a field of characteristic zero, which will be the real numbers $\mathbb R$ whenever differential geometry comes into play.

\begin{definition}\cite{BV2007}
A post-Lie algebra on $\mathbf k$ is a Lie algebra $(\mathfrak{g},[.\,,.])$ together with a bilinear mapping $\rhd$ : $\mathfrak{g} \times \mathfrak{g} \longrightarrow \mathfrak{g}$ compatible with the Lie bracket, in the following sense:
\begin{equation}\label{eq1}
	x \rhd [y,z] = [x\rhd y,z]+ [y, x \rhd z]
\end{equation}
\begin{equation}\label{eq2}
	[x,y]\rhd z = {\mathrm{a}}_{\rhd}(x,y,z) - {\mathrm{a}}_{\rhd}(y,x,z),
\end{equation}
for any elements $x, y, z \in \mathfrak{g}$. Here ${\mathrm{a}}_{\rhd}(x,y,z)$ is the associator with respect to the product $\rhd$ defined by:
$$
	{\mathrm{a}}_{\rhd}(x,y,z) := x \rhd (y \rhd z) - (x \rhd y) \rhd z.
$$
\end{definition} 

Any Lie algebra can be seen as a post-Lie algebra by setting the second product $\rhd$ to zero. Another possibility is to take for the second product $\rhd$ the opposite of the Lie bracket. A (left) pre-Lie algebra is just an Abelian post-Lie algebra, i.e., a post-Lie algebra with trivial Lie bracket, implying that \eqref{eq2} reduces to the (left) pre-Lie identity
$$
	0= {\mathrm{a}}_{\rhd}(x,y,z) - {\mathrm{a}}_{\rhd}(y,x,z).
$$
In other words, for a (left) pre-Lie algebra the associator is symmetric in the first two entries. On any post-Lie algebra, particular combinations of the Lie bracket and the $\rhd$ product yield two other operations, as follows:
\allowdisplaybreaks
\begin{align}
	\llbracket x, y\rrbracket & := x \rhd y - y \rhd x + [x, y], \label{twoLie}\\
	x \RHD y & := x \rhd y + [x, y], \nonumber
\end{align}
for all $x, y \in \mathfrak{g}$. From \eqref{eq1} and \eqref{eq2} above, one can easily deduce that $\big(\mathfrak{g}, \llbracket.\,, .\rrbracket\big)$ forms a Lie algebra, and the triple $\big(\mathfrak{g}, - [.\,, .], \RHD\big)$ is another post-Lie algebra \cite{CEO20, HA13} sharing the same double Lie bracket:
\allowdisplaybreaks
\begin{align*}
	\llbracket x, y\rrbracket 
	&= x \rhd y - y \rhd x + [x, y]\\
	&=x \RHD y - y \RHD x - [x, y]\\
	&=x\rhd y-y\RHD x.
\end{align*}

\begin{example}
\label{exm:connection}\rm
Let $\mathcal{XM}$ be the space of vector fields on a smooth manifold $\mathcal{M}$, which is equipped with an affine connection. For vector fields $X,Y \in \mathcal{XM}$, the covariant derivative of $Y$ in the direction of $X$ is denoted $\nabla_X Y =: X \rhd Y$. This defines an $\mathbb{R}$-linear, non-associative binary product on $\mathcal{XM}$. The torsion $t$ is {defined by}
\begin{align}
\label{torsion}
	t(X,Y) := X\rhd Y - Y\rhd X - \lbc X,Y \rbc,
\end{align}
where the bracket $\lbc .\,, . \rbc$ on the right is the usual Jacobi bracket of vector fields. It admits a covariant differential $\nabla t$. The curvature tensor $r$ is given by
\[
	r(X,Y)Z := X\rhd(Y\rhd Z) - Y\rhd(X\rhd Z) 
	- \lbc X,Y \rbc \rhd Z.
\]
It is known that for a flat connection with constant torsion, $r=0=\nabla t$, we have that $\big(\mathcal{XM},-t( . \, , . ),\rhd\big)$ defines a post-Lie algebra. The first Bianchi identity (see \eqref{bianchi-one} below) shows that $-t( . \, , . )$ obeys the Jacobi identity; skew-symmetry of $t$ implies anti-symmetry. Flatness is equivalent to identity $(\ref{eq2})$, whereas property $(\ref{eq1})$ follows from the definition of the covariant differential of $t$:
\[
	0 = (\nabla_X t) (Y,Z) = X \rhd t(Y,Z) 
						- t(Y,X \rhd Z) 
							- t(X \rhd Y,Z).
\]
\end{example}

\medskip

\begin{definition} \cite{MKW08}
\label{def:Dalgebra}
Let $(D,\fat{\cdot},\rhd)$ be an associative algebra with product~$m_D(u \otimes v)= u \fat{\cdot} v$ and unit $\mathbf{1}$, carrying another product $\rhd: D \otimes D \rightarrow D$ such that $\mathbf{1} \rhd v =v$ for all $v \in D$. Let 
$$
	\mathfrak d (D)
	:=\{u \in D \ |\ u \rhd (v \fat{\cdot} w) 
	= (u \rhd v) \fat{\cdot} w + v \fat{\cdot} (u \rhd w),\ \forall v,w \in D\}.
$$

The triple $(D,\fat{\cdot},\rhd)$ is called a $D$-algebra if the algebra product $\fat{\cdot}$ generates $D$  from $\{\mathbf{1},\mathfrak d (D)\}$ and furthermore for any $x \in \mathfrak d (D)$  and $v,w \in D$
\allowdisplaybreaks
\begin{align}
\label{D1}
	v \rhd x &\in \mathfrak d (D)\\
\label{D2}
	(x \fat{\cdot} v) \rhd w &= {\mathrm{a}}_{\rhd}(x,v,w).
\end{align}  
\end{definition} 

\begin{lemma}
\label{lem:DpostLie}
Let $(D,\fat{\cdot},\rhd)$ be as in Definition \ref{def:Dalgebra}. Then $\mathfrak d (D)$ together with $\rhd$ and Lie bracket $[u,v]:=u \fat{\cdot} v - v \fat{\cdot} u$ is a post-Lie algebra.
\end{lemma}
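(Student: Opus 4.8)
The plan is to verify, one by one, the data and axioms making $\big(\mathfrak d(D),\rhd,[.\,,.]\big)$ a post-Lie algebra, observing that three of the four ingredients are essentially immediate and isolating the single computation that carries the weight of the lemma. Since $[u,v]=u\fat{\cdot}v-v\fat{\cdot}u$ is the commutator of the associative product $\fat{\cdot}$, its antisymmetry and the Jacobi identity hold automatically, so the only substantive point in asserting that $\big(\mathfrak d(D),[.\,,.]\big)$ is a Lie algebra is closure of $\mathfrak d(D)$ under $[.\,,.]$ (deferred to the end). Closure of $\rhd$ on $\mathfrak d(D)$ — that is, $x\rhd y\in\mathfrak d(D)$ whenever $x,y\in\mathfrak d(D)$ — is simply an instance of \eqref{D1}, taking the distinguished element of $\mathfrak d(D)$ to be $y$ and the free factor to be $x$.

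Next I would dispatch the two compatibility axioms. Identity \eqref{eq1} is nothing but the defining Leibniz property of $x\in\mathfrak d(D)$ applied to $y\fat{\cdot}z$ and to $z\fat{\cdot}y$: expanding $x\rhd(y\fat{\cdot}z-z\fat{\cdot}y)$ by the derivation rule and regrouping yields precisely $[x\rhd y,z]+[y,x\rhd z]$. Identity \eqref{eq2} is even more direct: writing $[x,y]=x\fat{\cdot}y-y\fat{\cdot}x$ and applying \eqref{D2} twice — legitimate because both $x$ and $y$ lie in $\mathfrak d(D)$ — gives $[x,y]\rhd z=(x\fat{\cdot}y)\rhd z-(y\fat{\cdot}x)\rhd z={\mathrm{a}}_{\rhd}(x,y,z)-{\mathrm{a}}_{\rhd}(y,x,z)$, which is exactly \eqref{eq2}.

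The main obstacle is the remaining closure statement, $[x,y]\in\mathfrak d(D)$ for $x,y\in\mathfrak d(D)$, and this is where the content of the $D$-algebra axioms is genuinely used. I would compute $(x\fat{\cdot}y)\rhd(v\fat{\cdot}w)$ for arbitrary $v,w\in D$. By \eqref{D2} this equals ${\mathrm{a}}_{\rhd}(x,y,v\fat{\cdot}w)=x\rhd\big(y\rhd(v\fat{\cdot}w)\big)-(x\rhd y)\rhd(v\fat{\cdot}w)$. Expanding the first term by the Leibniz rule for $y$ and then for $x$ (both derivations), and expanding the second by the Leibniz rule for $x\rhd y$ — which is a derivation by \eqref{D1} — I collect the terms that reassemble, via \eqref{D2}, into $\big((x\fat{\cdot}y)\rhd v\big)\fat{\cdot}w+v\fat{\cdot}\big((x\fat{\cdot}y)\rhd w\big)$. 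What remains are exactly the two cross terms $(y\rhd v)\fat{\cdot}(x\rhd w)+(x\rhd v)\fat{\cdot}(y\rhd w)$. The key observation is that these cross terms are symmetric in $x$ and $y$, so upon subtracting the analogous expansion of $(y\fat{\cdot}x)\rhd(v\fat{\cdot}w)$ they cancel identically, leaving $[x,y]\rhd(v\fat{\cdot}w)=\big([x,y]\rhd v\big)\fat{\cdot}w+v\fat{\cdot}\big([x,y]\rhd w\big)$, i.e. $[x,y]\in\mathfrak d(D)$. This cancellation of the non-Leibniz remainder under antisymmetrization is the heart of the argument; everything else is bookkeeping, and part of it may be relegated to Appendix \ref{app:proofs} as announced in the introduction.
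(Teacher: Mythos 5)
Your proposal is correct and takes essentially the same route as the paper: both isolate closure of $\mathfrak d(D)$ under the commutator bracket as the only substantive point, derive \eqref{eq1} from the fact that a derivation of $\fat{\cdot}$ is automatically a derivation of its commutator, and get \eqref{eq2} immediately by applying \eqref{D2} to $x\fat{\cdot}y$ and $y\fat{\cdot}x$. The sole difference is presentational: where you expand $(x\fat{\cdot}y)\rhd(v\fat{\cdot}w)$ element-wise and cancel the symmetric cross terms, the paper packages the same computation as the operator identity $L^\rhd_{x\fat{\cdot}y-y\fat{\cdot}x}=[L^\rhd_x,L^\rhd_y]-L^\rhd_{x\rhd y-y\rhd x}$, exhibiting $L^\rhd_{[x,y]}$ as a combination of derivations via \eqref{D1} and the classical fact that a commutator of derivations is a derivation.
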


\begin{proof}
Remark that $\mathfrak d(D)$ is the set of $x\in D$ such that $L^\rhd_x :=x\rhd -$ is a derivation for the associative product. One only needs to verify that $\mathfrak d (D)$ is stable under the Lie bracket $(x,y)\mapsto x \fat{\cdot}y-y \fat{\cdot} x$, which amounts to prove that $L^\rhd_{x\fat{\cdot}y-y \fat{\cdot} x}$ is a derivation. From
$$
	L^\rhd_{x \fat{\cdot} y}=L^\rhd_x \circ L^\rhd_y - L^\rhd_{x \rhd y},
$$
which is a reformulation of \eqref{D2}, we get
$$
	L^\rhd_{x \fat{\cdot} y-y \fat{\cdot} x} = [L^\rhd_x, L^\rhd_y] - L^\rhd_{x \rhd y -y \rhd x},
$$
which proves the claim. Identity \eqref{eq1} results from the fact that any derivation for the associative product is also a derivation for the Lie bracket, and \eqref{eq2} is checked immediately.
\end{proof}

\begin{example}\label{example2}\rm
The space $\mathcal{DM}$ of differential operators on the manifold $\mathcal{M}$ endowed with an affine connection $\nabla$ with vanishing curvature and constant torsion is a $D$-algebra such that $\mathfrak d(\mathcal{DM})=\mathcal{XM}$.
\end{example}

\noindent This non-trivial example is treated in detail in \cite{HA13}. We shall revisit it in Subsection \ref{par:diff} below.

\subsection{Free post-Lie and free $D$-algebras}
\label{sec:Dtensoralg}

\subsubsection{The free $D$-algebra generated by a magmatic algebra}
\label{par:freeD}

Recall that a magmatic algebra consists of a set equipped with a binary operation and no further relations.

\begin{theorem}
\label{thm:Dtensor}
Let $(M, \rhd)$ be any magmatic algebra. Let $T(M)$ be the tensor algebra over $M$ with concatenation as product. Extending the magma product $\rhd$ to $T(M)$ as follows
\allowdisplaybreaks
\begin{align}
\label{Dtensor1}
	x \rhd (V W) &= (x \rhd V) W +
	V (x \rhd W)  \\
\label{Dtensor2}
	(xV) \rhd W &= {\mathrm{a}}_{\rhd}(x,V,W)
\end{align} 
for any element $x\in M$ and $V,W\in T(M)$, defines a $D$-algebra structure on $T(M)$.
\end{theorem}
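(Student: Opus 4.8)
The plan is to verify the $D$-algebra axioms from Definition~\ref{def:Dalgebra} directly for $(T(M),\cdot,\rhd)$, where $\cdot$ denotes concatenation. First I would check that the two defining equations \eqref{Dtensor1} and \eqref{Dtensor2} actually determine a well-defined bilinear extension of $\rhd$ to all of $T(M)$. Every element of $T(M)$ is a linear combination of words $x_1\cdots x_n$ with $x_i\in M$, and every such word factors uniquely as $x_1\cdot(x_2\cdots x_n)$ with $x_1\in M$. Reading \eqref{Dtensor1} as a recursion on the \emph{left} argument built up one letter at a time, and \eqref{Dtensor2} together with \eqref{Dtensor1} as a recursion on the length of the left factor, one defines $V\rhd W$ by induction on the length of $V$ (with the convention $\mathbf 1\rhd W=W$ on the empty word). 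The content here is that the recursion is unambiguous: a word of length $\ge 2$ has a unique decomposition $x\cdot V$ with $x\in M$, so \eqref{Dtensor2} applies without overlap, and for generators $x\in M$ only \eqref{Dtensor1} is used to push $x$ across a concatenation.

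Next I would identify the subspace $\mathfrak d(T(M))$. The aim is to show $M\subseteq\mathfrak d(T(M))$, which is precisely the assertion that each left-multiplication operator $L^\rhd_x=x\rhd(-)$ is a derivation of concatenation for $x\in M$; but this is exactly what \eqref{Dtensor1} says. Since the associative product (concatenation) generates $T(M)$ from $\{\mathbf 1\}\cup M$, and $M\subseteq\mathfrak d(T(M))$, the generation condition in Definition~\ref{def:Dalgebra} is satisfied. It then remains to verify axioms \eqref{D1} and \eqref{D2} for arbitrary $x\in\mathfrak d(T(M))$; I would in fact first establish them for $x\in M$ (where \eqref{D2} reduces to a special case of \eqref{Dtensor2} and \eqref{D1} must be checked), and then bootstrap to all of $\mathfrak d(T(M))$ using the derivation characterization, mirroring the operator identity $L^\rhd_{x\cdot y}=L^\rhd_x\circ L^\rhd_y-L^\rhd_{x\rhd y}$ exploited in the proof of Lemma~\ref{lem:DpostLie}.

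I expect the main obstacle to be proving that $\rhd$ is \emph{associative-compatible} in the strong form required, i.e.\ establishing \eqref{D1} (that $V\rhd x$ lands back in $\mathfrak d(T(M))$ when $x\in\mathfrak d(T(M))$) and the full associator identity \eqref{D2} for left factors of arbitrary length, rather than just for single generators. The natural tool is a double induction on the lengths of the two arguments: one shows that whenever $L^\rhd_x$ and $L^\rhd_y$ are derivations then so is $L^\rhd_{x\rhd y}$ and $L^\rhd_{x\cdot y}$, which simultaneously gives stability of $\mathfrak d(T(M))$ under both $\rhd$ and concatenation and yields \eqref{D2}. The delicate bookkeeping is checking that the extension \eqref{Dtensor1}--\eqref{Dtensor2} is consistent, namely that the two ways of evaluating $(x\cdot V)\rhd(WW')$ — either by first applying \eqref{Dtensor2} to split off $x\cdot V$, or by first applying the Leibniz rule \eqref{Dtensor1} across $WW'$ — agree. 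I would isolate this consistency as the technical lemma, prove it by induction on word lengths, and then deduce \eqref{D1} and \eqref{D2} as formal consequences; the Lie-compatibility identities \eqref{eq1} and \eqref{eq2} of the underlying post-Lie structure need not be reproved here, as they follow from Lemma~\ref{lem:DpostLie} once the $D$-algebra axioms are in place. Given the intricate nature of these computations, I anticipate that the routine verifications would be deferred to Appendix~\ref{app:proofs}, as the introduction indicates for the lengthier algebraic proofs in this section.
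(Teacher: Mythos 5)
Your strategy breaks down at the claimed key lemma: it is \emph{not} true that $L^\rhd_{x\cdot y}$ is a derivation of concatenation whenever $L^\rhd_x$ and $L^\rhd_y$ are, so $\mathfrak d\big(T(M)\big)$ is \emph{not} stable under concatenation. Already for $x,y\in M$, expanding $(x\cdot y)\rhd(VW)$ by \eqref{Dtensor2} and then applying \eqref{Dtensor1} to each term gives
\begin{align*}
(x\cdot y)\rhd(VW)
&= x\rhd\big((y\rhd V)\,W+V\,(y\rhd W)\big)-\big((x\rhd y)\rhd V\big)\,W-V\,\big((x\rhd y)\rhd W\big)\\
&= \big((x\cdot y)\rhd V\big)\,W+V\,\big((x\cdot y)\rhd W\big)
 +(x\rhd V)(y\rhd W)+(y\rhd V)(x\rhd W),
\end{align*}
and the two cross terms do not vanish in general. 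The correct statement, which is the heart of the paper's proof, is that the action of a general left factor on a product is governed by the unshuffle coproduct, $U\rhd(VW)=(U_{(1)}\rhd V)(U_{(2)}\rhd W)$ (Equation \eqref{Dtensor1-iterated}), whence $\mathfrak d\big(T(M)\big)$ is exactly the set of \emph{primitive} elements of $T(M)$, i.e.\ the free Lie algebra $\mop{Lie}(M)$. This identification is also what your proposal lacks in order to know \emph{which} elements $x$ require the verification of \eqref{D1} and \eqref{D2}: a bootstrap starting from $M$ can only reach the Lie algebra generated by $M$, and without the primitivity characterization you cannot conclude that this exhausts $\mathfrak d\big(T(M)\big)$.

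The same misconception undermines your proposed consistency lemma: there are not two competing ways to evaluate $(x\cdot V)\rhd(WW')$, because \eqref{Dtensor1} is only postulated for left factors in $M$; for a left factor of length $\geq 2$ the plain Leibniz rule simply fails, as the computation above shows, and the coproduct formula takes its place. The repair is exactly the paper's route: prove \eqref{Dtensor1-iterated} by induction; deduce $\mathfrak d\big(T(M)\big)=\mop{Prim}\,T(M)=\mop{Lie}(M)$; then show that the set of elements satisfying both \eqref{Dtensor1} (the derivation property) and \eqref{Dtensor2} is closed under the \emph{commutator} $[X,Y]=X\cdot Y-Y\cdot X$ --- not under concatenation --- which is the computation carried out in Appendix \ref{app:proofs}. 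Since that set contains $M$ and sits inside the Lie algebra $\mop{Lie}(M)$, it equals $\mop{Lie}(M)=\mathfrak d\big(T(M)\big)$, giving \eqref{D2} for every element of $\mathfrak d\big(T(M)\big)$; axiom \eqref{D1} then follows from the compatibility of $\rhd$ with $\Delta_\shuffle$ (Proposition \ref{magma-deshuffle}), which shows that $V\rhd x$ is primitive whenever $x$ is.
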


\begin{proof}
It is clear that \eqref{Dtensor1} and \eqref{Dtensor2} uniquely define the extended magma product $\rhd$, by induction on the lengths of the elements involved. The tensor algebra is a Hopf algebra with the usual unshuffle coproduct, $\Delta_\shuffle(A):=A_{(1)} \otimes A_{(2)}$ (we use Sweedler's notation). The elements in the magmatic algebra $M$ are primitive, i.e., $\Delta_\shuffle(x)=x \otimes \mathbf{1} + \mathbf{1} \otimes x$ for all $x \in M$. A more explicit formula for the unshuffle coproduct is given by
\begin{equation}
\label{deshuffle-expl}
	\Delta_\shuffle(x_1\cdots x_n)=\sum_{I\sqcup J=\{1,\ldots,n\}}x_I\otimes x_J,
\end{equation}
with the obvious word notation $x_I:=x_{i_1}\cdots x_{i_p}$ for index set $I=\{i_1,\ldots,i_p\}$ with $i_1<\cdots <i_p$. The set $\mop{Prim} T(M)$ of primitive elements is actually the free Lie algebra $\mop{Lie}(M)$ generated by $M$. This is the vector subspace of $T(M)$ generated by iterated Lie brackets of elements of $M$ \cite{Ch.R93}. An iteration of \eqref{Dtensor1} yields
\begin{equation}
\label{Dtensor1-iterated}
	U\rhd(VW)=(U_{(1)}\rhd V)(U_{(2)}\rhd W),
\end{equation}
which is easily checked on monomials $U=x_1\cdots x_n$ (with $x_1,\ldots, x_n\in M$) by induction on the length $n$. We deduce from \eqref{Dtensor1-iterated} that the set of $U\in T(M)$ such that $L^\rhd_U$ is a derivation is precisely $\mop{Lie}(M)$. It remains to show that \eqref{Dtensor2} is valid for any element $x\in\mop{Lie}(M)$. It suffices to check that the set of elements of $T(M)$ verifying \eqref{Dtensor1} and \eqref{Dtensor2}, which contains $M$, is a Lie subalgebra. Let us choose two elements $X$ and $Y$ in $T(M)$ verifying \eqref{Dtensor1} and \eqref{Dtensor2}. The claim follows from a straightforward computation detailed in Appendix \ref{app:proofs}. 
\end{proof}

\begin{remark}\label{rmk:freeD}\rm
It is easily checked that $T(M)$ is the free $D$-algebra generated by the magmatic algebra $(M,\rhd)$. In fact, for any $D$-algebra $(D,\diamond,\cdot)$, any magmatic morphism $\psi: M \to (D,\diamond)$ can be uniquely extended to an associative algebra morphism $\Psi:T(M)\to D$, which happens to be a $D$-algebra morphism. Similarly, $\mop{Lie}(M)$ is the free post-Lie algebra generated by the magmatic algebra $M$.
\end{remark}

It is easily seen that $L_x^\rhd(y):=x\rhd y$ is a coderivation with respect to the coproduct $\Delta_\shuffle$ for any $x\in M$. More generally the coproduct is compatible with the extended magmatic product:

\begin{proposition}
\label{magma-deshuffle}
For any $U,V\in T(M)$ we have
$$
	\Delta_\shuffle(U\rhd V)
	=\Delta_\shuffle(U)\rhd\Delta_\shuffle(V)
	=U_{(1)}\rhd V_{(1)}\otimes U_{(2)}\rhd V_{(2)}.
$$
\end{proposition}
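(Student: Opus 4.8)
The plan is to prove the substantive first equality $\Delta_\shuffle(U\rhd V)=U_{(1)}\rhd V_{(1)}\otimes U_{(2)}\rhd V_{(2)}$ by induction on the word length of $U$; the second equality is merely the expansion, in Sweedler notation, of the componentwise product $(A\otimes B)\rhd(C\otimes D)=(A\rhd C)\otimes(B\rhd D)$ on $T(M)\otimes T(M)$. By bilinearity it suffices to treat homogeneous $U$ and $V$, and throughout I will lean on two structural facts: first, that $\Delta_\shuffle$ is an algebra morphism for the concatenation product (this is exactly the bialgebra property already invoked, the generators in $M$ being primitive); and second, that each $L^\rhd_x$ with $x\in M$ preserves the length grading, which follows from $x\rhd y\in M$ together with the Leibniz rule \eqref{Dtensor1}.

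The base of the induction is the generator case $U=x\in M$, which is precisely the statement, recorded just before the proposition, that $L^\rhd_x$ is a coderivation for $\Delta_\shuffle$. Indeed, since $x_{(1)}\otimes x_{(2)}=x\otimes\mathbf{1}+\mathbf{1}\otimes x$ and $\mathbf{1}\rhd V=V$, the right-hand side collapses to $(x\rhd V_{(1)})\otimes V_{(2)}+V_{(1)}\otimes(x\rhd V_{(2)})$. Should this coderivation property not be taken as given, I would prove it by a short secondary induction on the length of $V$, using $x\rhd\mathbf{1}=0$ (from $\mathbf{1}=\mathbf{1}\,\mathbf{1}$ in \eqref{Dtensor1}), the primitivity of $x\rhd y\in M$, and the interaction of \eqref{Dtensor1} with the multiplicativity of $\Delta_\shuffle$. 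The case $U=\mathbf{1}$ is immediate.

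For the inductive step I would write a length-$n$ monomial as $U=xU'$ with $x\in M$ and $U'$ of length $n-1$, and expand via the associator relation \eqref{Dtensor2}, $(xU')\rhd V=x\rhd(U'\rhd V)-(x\rhd U')\rhd V$. Applying $\Delta_\shuffle$ to each summand, the base case lets me split off the action of $x$ as a coderivation, after which the induction hypothesis applies both to $U'\rhd V$ and to $(x\rhd U')\rhd V$ — the latter being legitimate because $x\rhd U'$ is again homogeneous of length $n-1$. This produces four terms that group into two pairs. In the first pair, $x$ acts on the left tensor slot, and the two contributions assemble, through $\mathrm{a}_{\rhd}(x,U'_{(1)},V_{(1)})=(xU'_{(1)})\rhd V_{(1)}$ from \eqref{Dtensor2}, into $((xU'_{(1)})\rhd V_{(1)})\otimes(U'_{(2)}\rhd V_{(2)})$; symmetrically the second pair yields $(U'_{(1)}\rhd V_{(1)})\otimes((xU'_{(2)})\rhd V_{(2)})$. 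Finally, multiplicativity of $\Delta_\shuffle$ gives $(xU')_{(1)}\otimes(xU')_{(2)}=xU'_{(1)}\otimes U'_{(2)}+U'_{(1)}\otimes xU'_{(2)}$, so the two assembled terms are exactly $(xU')_{(1)}\rhd V_{(1)}\otimes(xU')_{(2)}\rhd V_{(2)}$, closing the induction.

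The main obstacle is bookkeeping rather than conceptual: one must track the Sweedler legs of both $U'$ and $V$ while applying the induction hypothesis twice and invoking \eqref{Dtensor2} in each tensor factor. Reassuringly, no genuine cancellation is needed — the four terms pair up directly, each pair collapsing through the associator identity — so the only point demanding real care is the length-grading observation that legitimizes applying the induction hypothesis to $(x\rhd U')\rhd V$, and the verification that the assembled terms match the multiplicative expansion of $\Delta_\shuffle(xU')$.
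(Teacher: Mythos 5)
Your proof is correct and takes essentially the same route as the paper's: induction on the length of $U$, with the coderivation property of $L^\rhd_x$ ($x\in M$) as base case, the expansion $(xU')\rhd V = x\rhd(U'\rhd V)-(x\rhd U')\rhd V$ from \eqref{Dtensor2} in the inductive step, and regrouping of the four resulting terms through the associator identity together with the multiplicativity of $\Delta_\shuffle$. The one point you make explicit that the paper leaves implicit --- that $x\rhd U'$ is again a combination of monomials of length $n-1$, so the induction hypothesis legitimately applies to $(x\rhd U')\rhd V$ --- is a welcome clarification rather than a deviation.
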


\begin{proof}
We can suppose that $U$ is a monomial, and we proceed by induction on its length $\ell$. The details are given in Appendix \ref{app:proofs}. 
\end{proof}

\noindent We mention the following result for later use:

\begin{proposition}
\label{prop:pre-glp}
For any $U,V,W\in T(M)$ the following holds:
\begin{equation}
\label{Dtensor2-iter}
	U\rhd(V\rhd W)=\big(U_{(1)}(U_{(2)}\rhd V)\big)\rhd W.
\end{equation}
\end{proposition}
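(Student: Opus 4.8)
The plan is to prove \eqref{Dtensor2-iter} by induction on the length $\ell$ of $U$. By bilinearity of $\rhd$ and linearity of $\Delta_\shuffle$, both sides are linear in $U$, so it suffices to treat monomials $U=x_1\cdots x_\ell$ with $x_i\in M$; the point of working with monomials is that one can then peel off the leftmost letter and invoke the defining relation \eqref{Dtensor2}, which is available precisely when the left factor is a single generator in $M$.

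For $\ell=0$, i.e. $U=\mathbf 1$, both sides equal $V\rhd W$. For $\ell=1$, i.e. $U=x\in M$, primitivity gives $U_{(1)}\otimes U_{(2)}=x\otimes\mathbf 1+\mathbf 1\otimes x$, hence $U_{(1)}(U_{(2)}\rhd V)=xV+x\rhd V$, and the right-hand side reads $(xV)\rhd W+(x\rhd V)\rhd W$, which equals $x\rhd(V\rhd W)$ by \eqref{Dtensor2}. This base case already exhibits \eqref{Dtensor2-iter} as the ``integrated'' form of the $D$-algebra axiom.

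For the inductive step I would write $U=xU'$ with $x\in M$ and $U'$ of length $\ell-1$, and apply \eqref{Dtensor2} with $A:=V\rhd W$ to obtain
$$U\rhd(V\rhd W)=x\rhd\big(U'\rhd(V\rhd W)\big)-(x\rhd U')\rhd(V\rhd W).$$
The induction hypothesis applies to $U'$, giving $U'\rhd(V\rhd W)=B\rhd W$ with $B:=U'_{(1)}(U'_{(2)}\rhd V)$, and it also applies, after extension by linearity, to $x\rhd U'$, which is a sum of monomials of length $\ell-1$. A further application of \eqref{Dtensor2} to $x\rhd(B\rhd W)$ rewrites the first term as $(xB)\rhd W+(x\rhd B)\rhd W$. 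The remaining ingredients are the derivation rule \eqref{Dtensor1} to expand $x\rhd B$; the coproduct identity $\Delta_\shuffle(x\rhd U')=(x\rhd U'_{(1)})\otimes U'_{(2)}+U'_{(1)}\otimes(x\rhd U'_{(2)})$, which follows from Proposition \ref{magma-deshuffle} together with primitivity of $x$; and the fact that $\Delta_\shuffle$ is an algebra morphism for concatenation, giving $U_{(1)}\otimes U_{(2)}=xU'_{(1)}\otimes U'_{(2)}+U'_{(1)}\otimes xU'_{(2)}$.

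Collecting every contribution inside a single bracket $[\,\cdots]\rhd W$, the term $(x\rhd U'_{(1)})(U'_{(2)}\rhd V)$ arising from $x\rhd B$ cancels against the matching term from $(x\rhd U')\rhd(V\rhd W)$, and likewise $U'_{(1)}\big((x\rhd U'_{(2)})\rhd V\big)$ cancels; what survives is exactly $(xU'_{(1)})(U'_{(2)}\rhd V)+U'_{(1)}\big((xU'_{(2)})\rhd V\big)=U_{(1)}(U_{(2)}\rhd V)$, completing the induction. The main obstacle is purely organizational: one must keep careful track of the Sweedler components and apply \eqref{Dtensor2} only where the left factor is a single generator $x\in M$, so that the four extra terms cancel in two pairs; there is no conceptual difficulty beyond this bookkeeping.
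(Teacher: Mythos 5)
Your proof is correct and follows essentially the same route as the paper's own argument in Appendix \ref{app:proofs}: induction on the length of $U$, peeling off the leftmost letter $x$, applying \eqref{Dtensor2} twice, the induction hypothesis to both $U'$ and $x\rhd U'$, the derivation rule \eqref{Dtensor1}, and the coderivation property of $L^\rhd_x$. The only cosmetic quibble is that your second ``cancellation'' is really a recombination of $U'_{(1)}\big(x\rhd(U'_{(2)}\rhd V)\big)-U'_{(1)}\big((x\rhd U'_{(2)})\rhd V\big)$ into $U'_{(1)}\big((xU'_{(2)})\rhd V\big)$ via one more use of \eqref{Dtensor2}, which is exactly what the paper does and what your stated surviving terms reflect.
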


\begin{proof}
We can suppose without loss of generality that $U$ is a monomial. We proceed by induction on the length $\ell$ of $U$. The details are given in Appendix \ref{app:proofs}. 
\end{proof}

\begin{definition}
\label{def:GLprod}
The Grossman--Larson product on $T(M)$ is defined by
\begin{equation}
\label{glp}
	U\ast V:=U_{(1)}(U_{(2)}\rhd V).
\end{equation}
\end{definition}

\noindent It is easily seen to be associative:
\allowdisplaybreaks
\begin{align*}
	(U\ast V)\ast W
		&=U_{(1)}(U_{(2)}\rhd V_{(1)})\Big(\big(U_{(3)} (U_{(4)}\rhd V_{(2)})\big)\rhd W\Big)\\
		&=U\ast(V\ast W).
\end{align*}
This is a straightforward computation using the cocommutativity of the unshuffle coproduct, left to the reader (see the proof of Proposition 3.3 in \cite{ELM2015}).

\begin{proposition}
\label{gl-deshuffle}
The Grossman--Larson product \eqref{glp} is compatible with the unshuffle coproduct: for any $U,V\in T(M)$ we have
$$
	\Delta_\shuffle(U\ast V)
	=\Delta_\shuffle(U)\ast\Delta_\shuffle(V)
	=U_{(1)}\ast V_{(1)}\otimes U_{(2)}\ast V_{(2)}.
$$
\end{proposition}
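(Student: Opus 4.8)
The plan is to stay entirely within Sweedler calculus, treating $\Delta_\shuffle$ as a morphism and pushing it through the defining formula \eqref{glp} for the Grossman--Larson product. Two structural facts will do all the work. First, since the tensor algebra is a Hopf algebra with concatenation as product and $\Delta_\shuffle$ as coproduct, the unshuffle coproduct is multiplicative for concatenation: $\Delta_\shuffle(AB)=A_{(1)}B_{(1)}\otimes A_{(2)}B_{(2)}$. Second, Proposition \ref{magma-deshuffle} gives the compatibility of the extended magma product with $\Delta_\shuffle$, namely $\Delta_\shuffle(A\rhd B)=A_{(1)}\rhd B_{(1)}\otimes A_{(2)}\rhd B_{(2)}$. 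Both of these I may freely invoke, as they are established earlier.

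First I would apply $\Delta_\shuffle$ to $U\ast V=U_{(1)}(U_{(2)}\rhd V)$ and use multiplicativity for concatenation to split it as $\Delta_\shuffle(U_{(1)})\cdot\Delta_\shuffle(U_{(2)}\rhd V)$, the dot denoting componentwise concatenation on $T(M)\otimes T(M)$. Expanding the first factor by a further application of $\Delta_\shuffle$ and the second by Proposition \ref{magma-deshuffle}, and then collapsing the nested coproducts of $U$ into a single four-fold coproduct $U_{(1)}\otimes U_{(2)}\otimes U_{(3)}\otimes U_{(4)}$ via coassociativity, should yield
\[
	\Delta_\shuffle(U\ast V)
	=U_{(1)}(U_{(3)}\rhd V_{(1)})\otimes U_{(2)}(U_{(4)}\rhd V_{(2)}).
\]
In parallel I would expand the claimed right-hand side $U_{(1)}\ast V_{(1)}\otimes U_{(2)}\ast V_{(2)}$ by substituting the definition of $\ast$ into each tensor slot and again collapsing the nested $U$-coproducts, obtaining
\[
	U_{(1)}\ast V_{(1)}\otimes U_{(2)}\ast V_{(2)}
	=U_{(1)}(U_{(2)}\rhd V_{(1)})\otimes U_{(3)}(U_{(4)}\rhd V_{(2)}).
\]

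The two expressions differ only in how the four copies of $U$ are distributed between the two roles of ``concatenation head'' and ``$\rhd$-acting part'': the first keeps the heads $U_{(1)},U_{(2)}$ grouped ahead of the actors $U_{(3)},U_{(4)}$, while the second interleaves them. Reconciling the two is precisely where cocommutativity of $\Delta_\shuffle$ enters, and this is the only genuine subtlety of the proof. For a cocommutative (and coassociative) coalgebra the four-fold coproduct is invariant under every permutation of its tensor factors; transposing the second and third factors turns one expression into the other, completing the argument. Everything else is routine index bookkeeping. A more pedestrian alternative would be induction on the length of a monomial $U$ using \eqref{Dtensor1} and \eqref{Dtensor2}, but the Sweedler computation above is cleaner and sidesteps the case analysis.
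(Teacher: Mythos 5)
Your proof is correct and is precisely the ``straightforward check'' the paper invokes: it applies $\Delta_\shuffle$ to the defining formula \eqref{glp}, splits it via the bialgebra property of concatenation and Proposition \ref{magma-deshuffle}, and reconciles the two four-fold Sweedler expressions. You have in fact spelled out the one genuinely non-routine ingredient---cocommutativity of $\Delta_\shuffle$, needed to transpose $U_{(2)}$ and $U_{(3)}$---which the paper's one-line proof leaves implicit.
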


\begin{proof}
This is a straightforward check using \eqref{glp} and Proposition \ref{magma-deshuffle}.
\end{proof}


\subsubsection{The free $D$-algebra generated by a set. Planar rooted trees and grafting}
\label{ssec:freeDalg}

The following is immediate in view of Remark \ref{rmk:freeD}:

\begin{proposition}
Let $A$ be a finite alphabet and $\big(\mop{Mag}(A),\rhd\big)$ the free magmatic algebra over $A$. Then $\mop{Dalg}(A)=T\big(\mop{Mag}(A)\big)$, with the product $\rhd$ extended as in Theorem \ref{thm:Dtensor}, is the free $D$-algebra generated by $A$. Similarly, $\mop{PostLie}(A)=\mop{Lie}\big(\mop{Mag}(A)\big)$ is the free post-Lie algebra generated by $A$.
\end{proposition}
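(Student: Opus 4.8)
The plan is to prove this purely formally, as an instance of the transitivity of free constructions (equivalently, the composition of left adjoints): the free $D$-algebra on a set $A$ should coincide with the free $D$-algebra on the free magmatic algebra $\mop{Mag}(A)$, which by Remark \ref{rmk:freeD} is exactly $T\big(\mop{Mag}(A)\big)=\mop{Dalg}(A)$. Concretely, I would verify the universal property of the free $D$-algebra generated by $A$, namely that the generators sit inside the derivation part through a canonical map $A \to \mathfrak{d}\big(\mop{Dalg}(A)\big)$, and that every set map from $A$ into the derivation part $\mathfrak{d}(D)$ of an arbitrary $D$-algebra $D$ extends uniquely to a $D$-algebra morphism out of $\mop{Dalg}(A)$.

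First I would record the two universal properties to be stacked. On the one hand, $\mop{Mag}(A)$ is the free magmatic algebra on $A$: any set map from $A$ into the underlying magma of a magmatic algebra extends uniquely to a magmatic morphism. On the other hand, Remark \ref{rmk:freeD} asserts that $T(M)$ is the free $D$-algebra on a magmatic algebra $M$. The observation that makes these composable is that the derivation part $\mathfrak{d}(D)$ of any $D$-algebra is closed under $\rhd$ --- immediate from \eqref{D1} applied with $v\in\mathfrak{d}(D)$ --- and hence is itself a magmatic algebra (in fact a post-Lie algebra, by Lemma \ref{lem:DpostLie}). Thus a set map $f\colon A \to \mathfrak{d}(D)$ may legitimately be fed into the universal property of $\mop{Mag}(A)$. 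Recall moreover that $\mathfrak{d}\big(T(M)\big)=\mop{Lie}(M)$, so the canonical inclusion $\mop{Mag}(A)\hookrightarrow \mathfrak{d}\big(T(\mop{Mag}(A))\big)$ is a magmatic morphism and supplies the required map from the generators.

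The extension is then a two-step diagram chase. Given $f\colon A \to \mathfrak{d}(D)$, freeness of $\mop{Mag}(A)$ produces a unique magmatic morphism $g\colon \mop{Mag}(A) \to \mathfrak{d}(D)$ restricting to $f$ on $A$; Remark \ref{rmk:freeD} then extends $g$ uniquely to a $D$-algebra morphism $F\colon T\big(\mop{Mag}(A)\big) \to D$. This gives existence. For uniqueness, any competing $D$-algebra morphism $F'$ agreeing with $f$ on the generators, pre-composed with the canonical magmatic inclusion $\mop{Mag}(A)\hookrightarrow \mathfrak{d}\big(T(\mop{Mag}(A))\big)$, yields a magmatic morphism into $(D,\rhd)$ agreeing with $f$ on $A$; magma-uniqueness forces it to equal $g$, and then the $D$-algebra uniqueness in Remark \ref{rmk:freeD} forces $F'=F$. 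The post-Lie statement is verbatim the same, replacing $T(M)$ by $\mop{Lie}(M)$, using the second assertion of Remark \ref{rmk:freeD} together with the fact that every post-Lie algebra is a magma under $\rhd$.

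I do not expect any genuine obstacle: the entire argument is the formal composition of universal properties, which is why the statement is flagged as immediate. The only point requiring a moment's care --- and the closest thing to a \emph{main step} --- is the bookkeeping that the relevant forgetful functors actually compose, i.e.\ that $\mathfrak{d}(D)$ (respectively the underlying space of a post-Lie algebra) is a magmatic algebra under $\rhd$ and that the inclusions and morphisms in play respect this magmatic structure, so that the two universal properties dock together cleanly. There is no combinatorial or analytic content beyond this.
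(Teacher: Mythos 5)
Your proposal is correct and takes essentially the same route as the paper: the paper offers no separate proof, declaring the proposition ``immediate in view of Remark~\ref{rmk:freeD}'', and what it has in mind is exactly the composition of universal properties you carry out (freeness of $\mop{Mag}(A)$ over the set $A$ stacked on freeness of $T(M)$, resp.\ $\mop{Lie}(M)$, over the magmatic algebra $M$). Your added bookkeeping --- that $\mathfrak d(D)$ is closed under $\rhd$ by \eqref{D1} and that $\mathfrak d\big(T(\mop{Mag}(A))\big)=\mop{Lie}\big(\mop{Mag}(A)\big)$, so the two universal properties dock --- is precisely the content the paper leaves implicit.
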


It is known that the free magma generated by $A$ can be represented in terms of planar rooted trees
$$
	T^{pl}_A=\bigg\{\Forest{[a]},
	\hskip 4mm 
	\Forest{[a[b]]},
	\hskip 4mm 
	\Forest{[a[b[c]]]},\ \Forest{[a[b][c]]},
	\hskip 4mm 
	\Forest{[a[b[c[d]]]]},\ \Forest{[a[b[c][d]]]},\ \Forest{[a[b][c[d]]]},\ \Forest{[a[b[c]][d]]},\ \Forest{[a[b][c][d]]}, \ldots \bigg\}
$$  
with nodes decorated by elements of $A$. Here the magmatic product $\diamond : T^{pl}_A \times T^{pl}_A \to T^{pl}_A$ is the right Butcher product defined as follows: $\sigma\diamond\tau$ is the $A$-decorated planar rooted tree obtained by grafting $\sigma$ on the root of $\tau$ on the right, for example
$$
	\Forest{[d[e]]}\diamond\Forest{[c[a][b]]}=\Forest{[c[a][b][d[e]]]}\ .
$$ 
Then $(T^{pl}_A,\diamond)$ is the free magma generated by $A$ via  the inclusion $A  \hookrightarrow T^{pl}_A$ given by $a \mapsto \Forest{[a]}$. Now, let $\mathcal T^{pl}_A$ denote the vector space freely spanned by $A$-decorated planar rooted trees. The {left}-grafting $\curvearrowright : \mathcal T^{pl}_A \times \mathcal T^{pl}_A \to \mathcal T^{pl}_A$ is defined as the $\mathbf{k}$-linear product given by
\begin{equation}\label{diamond-rhd-init}
	\tau \curvearrowright \Forest{[]}_a:= \tau\diamond\Forest{[a]}
\end{equation}
and 
\begin{equation}
\label{diamond-rhd}
	\tau \curvearrowright (\tau_1 \diamond \tau_2) 
	= (\tau \curvearrowright \tau_1) \diamond \tau_2
		+ \tau_1 \diamond (\tau \curvearrowright \tau_2)
\end{equation}
for $\tau,\tau_1,\tau_2 \in T^{pl}_A$. For example, $\Forest{[b[a]]}=\Forest{[b]} \curvearrowright \Forest{[a]} = \Forest{[b]} \diamond\Forest{[a]}$ and 
\allowdisplaybreaks
\begin{eqnarray*}
\Forest{[b[a]]}\curvearrowright \Forest{[e[c][d]]}
	&=&\Forest{[b[a]]}\curvearrowright\left(\Forest{[d]}\diamond\Forest{[e[c]]} \right)\\
	&=&\left(\Forest{[b[a]]}\curvearrowright\Forest{[d]}\right)\diamond\Forest{[e[c]]}
		+ \Forest{[d]}\diamond\left(\Forest{[b[a]]}\curvearrowright\Forest{[e[c]]}\right)\\
	&=&\Forest{[d[b[a]]]}\diamond\Forest{[e[c]]}+\Forest{[d]}\diamond\left(\Forest{[e[b[a]][c]]}
		+\Forest{[e[c[b[a]]]]}\right)=\Forest{[e[c][d[b[a]]]]]}+\Forest{[e[b[a]][c][d]]}+\Forest{[e[c[b[a]]][d]]}\ .
\end{eqnarray*}


\begin{lemma}
\label{lem:iso}
Let $\varphi: \mop{Mag}(A)\to \mathcal T^{pl}_A$ be the unique linear map defined by 
$\varphi(a):=\Forest{[a]}$ for all $a \in A$ and $\varphi(\sigma \rhd \tau) := \varphi(\sigma)\curvearrowright \varphi(\tau)$ for all $\sigma,\tau \in\mop{Mag}(A)$. The map $ \varphi$ is an isomorphism, i.e., the magmatic algebras $(\mathcal T^{pl}_A,\curvearrowright )$ and $(\mathrm{Mag}(A), \rhd)$ are isomorphic.
\end{lemma}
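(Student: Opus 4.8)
The map $\varphi$ is well defined: since $\bigl(\mop{Mag}(A),\rhd\bigr)$ is the free magmatic algebra on $A$ and $\bigl(\mathcal T^{pl}_A,\curvearrowright\bigr)$ is itself a magmatic algebra, the assignment $a\mapsto \Forest{[a]}$ extends uniquely to a morphism of magmatic algebras, which is precisely $\varphi$. Hence $\varphi$ is automatically a magma morphism, and the only thing to prove is that it is bijective. Both $\mop{Mag}(A)$ and $\mathcal T^{pl}_A$ are graded by the number of nodes (equivalently, the number of letters counted with multiplicity), and $\varphi$ is homogeneous of degree $0$: indeed $|\sigma\rhd\tau|=|\sigma|+|\tau|$, while left grafting $\curvearrowright$ likewise adds node counts. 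The plan is to prove bijectivity degree by degree, by exhibiting a tree-indexed basis of $\mop{Mag}(A)$ on which $\varphi$ acts unitriangularly with respect to the tree basis of $\mathcal T^{pl}_A$.

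For a planar rooted tree $t$ with at least two nodes, let $c_1(t)$ be the leftmost principal subtree hanging from the root, and let $t^\flat$ be the tree obtained from $t$ by deleting $c_1(t)$ (keeping the root and the remaining principal subtrees in their planar order). The \emph{left-peeling} $t\mapsto\bigl(c_1(t),t^\flat\bigr)$ is a bijection onto ordered pairs of trees of strictly smaller size, and iterating it identifies planar rooted trees with the elements of the free magma; concretely, setting $e_{\Forest{[a]}}:=a$ and $e_t:=e_{c_1(t)}\rhd e_{t^\flat}$ for $|t|\ge 2$ produces a basis $\{e_t\}_t$ of $\mop{Mag}(A)$ indexed by trees. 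The key computation, obtained by unfolding \eqref{diamond-rhd-init} and iterating \eqref{diamond-rhd}, is the leading-term identity
\[
 c_1(t)\curvearrowright t^\flat \;=\; t + R(t),
\]
where $R(t)$ is a combination of trees of the same size as $t$ but of strictly smaller root-degree $\mathrm{rd}(\cdot)$ (the number of principal subtrees at the root). Indeed, among the summands of $c_1(t)\curvearrowright t^\flat$, the unique one in which $c_1(t)$ is grafted onto the root of $t^\flat$ is forced to attach it as the new leftmost child, thereby reconstructing $t$ with $\mathrm{rd}(t)=\mathrm{rd}(t^\flat)+1$; every other summand grafts $c_1(t)$ onto a strictly deeper node, leaving the root of $t^\flat$ untouched, and hence has root-degree $\mathrm{rd}(t)-1$.

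Now compute $\varphi(e_t)=\varphi(e_{c_1(t)})\curvearrowright\varphi(e_{t^\flat})$ and argue by induction on size: assuming $\varphi(e_{c_1(t)})=c_1(t)+(\text{lower})$ and $\varphi(e_{t^\flat})=t^\flat+(\text{lower})$, bilinearity of $\curvearrowright$ together with the leading-term identity gives $\varphi(e_t)=t+(\text{strictly lower terms})$. Ordering the tree basis by a total order that refines ``strictly smaller root-degree is strictly lower'', the matrix of $\varphi$ in the bases $\{e_t\}$ and $\{t\}$ is unitriangular on each homogeneous component, so $\varphi$ is a linear isomorphism; being also a magma morphism, it is an isomorphism of magmatic algebras. (Equivalently, one proves surjectivity by the same induction — primary on size, secondary upward on root-degree, the base case $\mathrm{rd}=1$ using \eqref{diamond-rhd-init} with no correction term — and then invokes the equality of the finite dimensions $C_{|t|-1}\,|A|^{|t|}$ of the homogeneous components.)

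The main obstacle is purely bookkeeping: one must fix a single well-founded total order on trees of a given size for which \emph{all} correction terms — those in $R(t)$ as well as the inductive remainders coming from $\varphi(e_{c_1(t)})$ and $\varphi(e_{t^\flat})$ — are genuinely lower than $t$. Root-degree alone controls $R(t)$; to absorb the remaining cross terms one refines it recursively through the left-peeling data, and the only delicate verification is that left grafting never raises this composite statistic. Everything else is a direct unwinding of the recursive definition \eqref{diamond-rhd} of $\curvearrowright$.
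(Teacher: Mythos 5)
Your main argument follows the same route as the paper's proof: your left-peeling basis $e_t$ is exactly $\overline{\kappa}^{-1}(t)$ for the left-Butcher isomorphism $\overline{\kappa}$ appearing in the paper's proof, so that $\varphi(e_t)=\Psi(t)$ for the comparison morphism $\Psi:(\mathcal T^{pl}_A,\lbutcher)\to(\mathcal T^{pl}_A,\curvearrowright)$, and everything rests on the unitriangularity of this map --- which the paper does not reprove but cites from \cite{AM2014}. Your justification of that triangularity has a genuine gap, and it sits exactly at the hard point. The leading-term identity $c_1(t)\curvearrowright t^\flat=t+R(t)$ is correct, and root-degree does control $R(t)$; it does not control the cross terms. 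By induction $\varphi(e_{c_1(t)})=c_1(t)+\sum_i a_i\sigma_i$ and $\varphi(e_{t^\flat})=t^\flat+\sum_j b_j\tau_j$, and the root-grafting summand $\sigma_i\lbutcher t^\flat$ of $\sigma_i\curvearrowright t^\flat$ has root-degree $\mathrm{rd}(t^\flat)+1=\mathrm{rd}(t)$; likewise $c_1(t)\lbutcher\tau_j$ can reach root-degree $\mathrm{rd}(t)$. A concrete instance: for $t=\Forest{[a[b[c][d]][e]]}$ one has $c_1(t)=\Forest{[b[c][d]]}$, $t^\flat=\Forest{[a[e]]}$, $\varphi(e_{c_1(t)})=\Forest{[b[c][d]]}+\Forest{[b[d[c]]]}$, and
\[
\varphi(e_t)=t+\Forest{[a[b[d[c]]][e]]}+\Forest{[a[e[b[c][d]]]]}+\Forest{[a[e[b[d[c]]]]]},
\]
where the second summand has the same root-degree as $t$. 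Consequently your claim that the matrix is unitriangular for \emph{any} total order refining root-degree is false as stated: an order refining root-degree is free to place $\Forest{[a[b[d[c]]][e]]}$ above $t$. What you would need is a specific refinement (roughly, a lexicographic comparison of the left-peeling data interleaved with root-degree) together with a verification that no grafting raises it --- root graftings call for the lexicographic comparison, while deeper graftings change the sizes of the peeling components, and making a single order serve both is precisely the combinatorial content of \cite{AM2014}. You flag this verification as ``delicate'' but never define the composite statistic nor carry it out, so the main line of your proof is incomplete.

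Your parenthetical alternative, on the other hand, is a complete and correct proof, and you should promote it to the main argument. Surjectivity in each degree follows from your double induction with no order at all: if $\mathrm{rd}(t)=1$ then $t=c_1(t)\curvearrowright\Forest{[a]}$ on the nose by \eqref{diamond-rhd-init}; if $\mathrm{rd}(t)=k\ge 2$, then $t=c_1(t)\curvearrowright t^\flat-R(t)$, where $c_1(t)\curvearrowright t^\flat=\varphi(u\rhd v)$ lies in the image by the induction on size and $R(t)$ lies in the image by the upward induction on root-degree, since all its trees have root-degree $k-1$. As $\varphi$ is graded and the degree-$n$ components on both sides have the same finite dimension $C_{n-1}|A|^n$ (the subsection's standing assumption is that $A$ is finite; for infinite $A$ one refines to the multigrading by decorations), surjectivity forces bijectivity, and a bijective magma morphism is a magmatic isomorphism. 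This route is genuinely different from the paper's: it trades the explicit triangular change of basis, and the external reference, for a soft dimension count; what it loses is exactly the effective, order-theoretic description of $\varphi^{-1}$ that \cite{AM2014} provides.
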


\begin{proof}
We have just seen that the correspondence $\kappa$, defined by $\kappa(a)=\Forest{[a]}$ for all $a\in A$ and
$$
	\kappa(\tau_1 \rhd \tau_2)=\kappa(\tau_1)\diamond\kappa(\tau_2)
$$
is a magma isomorphism. This is obviously still true for the correspondence $\overline\kappa$ analogously defined with the right Butcher product $\diamond$ replaced by its left version $\lbutcher$, where $\tau_1 \lbutcher \tau_2$ is the $A$-decorated planar rooted tree obtained by grafting $\tau_1$ on the root of $\tau_2$ on the left, for example
$$
	\Forest{[c[a][b]]}\lbutcher\Forest{[d[e]]}=\Forest{[d[c[a][b]][e]]}\ .
$$ 
Now both magmatic algebras $(\mathcal T^{pl}_A,\lbutcher)$ and $(\mathcal T^{pl}_A,\curvearrowright)$ are isomorphic. Indeed, the unique morphism $\Psi: (\mathcal T^{pl}_A,\lbutcher)\to (\mathcal T^{pl}_A,\curvearrowright)$ extending the identity on one-vertex trees can be put in upper-triangular matrix form with $1$'s on the diagonal, hence is a linear isomorphism. Full details are given in \cite{AM2014}. The isomorphism $\varphi$ is therefore given by $\varphi=\Psi\circ\overline\kappa$.
\end{proof}

\noindent As an example, we consider the tree
$$
\Forest{[c[a][b]]}
	= \Forest{[a]} \curvearrowright (\Forest{[b]} \curvearrowright \Forest{[c]}) 
		- (\Forest{[a]} \curvearrowright \Forest{[b]}) \curvearrowright \Forest{[c]}.
$$
Then we have 
\allowdisplaybreaks
\begin{align*}
	\varphi^{-1}\left(\Forest{[c[a][b]]}\right)
	&= \varphi^{-1}\big(\Forest{[a]} \curvearrowright (\Forest{[b]} \curvearrowright \Forest{[c]}) 
		- (\Forest{[a]} \curvearrowright \Forest{[b]}) \curvearrowright \Forest{[c]}\big)\\
	&=a \rhd (b  \rhd c) 
		-  (a \rhd b)  \rhd c\\
	&=\mathrm{a}_\rhd(a,b,c).	 
\end{align*}

\begin{remark}\label{lg}\rm
In view of Lemma \ref{lem:iso}, the free magmatic algebra $(\mop{Mag}(A),\rhd)$ can be represented by the linear span of planar $A$-decorated rooted trees, endowed by either the right Butcher product $\diamond$ or the left grafting $\curvearrowright$.
\end{remark}

\subsection{The enveloping algebra of a post-Lie algebra}
\label{sect:enveloping}

\begin{proposition}
\label{prop:quotient}
If $(\mathfrak{g}, [.\, ,.],\rhd)$ is a post-Lie algebra, then $\big(\mathcal U(\mathfrak{g}),\cdot,\rhd\big)$ is a $D$-algebra, and the set of primitive elements $\mathfrak{g}=\mop{Prim}\mathcal U(\mathfrak{g})$ is the post-Lie algebra $\mathfrak d \big(\mathcal U(\mathfrak{g})\big)$ of the $D$-algebra $\mathcal U(\mathfrak g)$.
\end{proposition}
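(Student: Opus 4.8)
The plan is to realize $\mathcal U(\mathfrak g)$ (the enveloping algebra of the Lie algebra $(\mathfrak g,[.\,,.])$) as a quotient of the free $D$-algebra and to transport the structure of Theorem \ref{thm:Dtensor} along the quotient map. Viewing $(\mathfrak g,\rhd)$ as a magmatic algebra, Theorem \ref{thm:Dtensor} makes $T(\mathfrak g)$ a $D$-algebra with $\mathfrak d\big(T(\mathfrak g)\big)=\mop{Lie}(\mathfrak g)=\mop{Prim}T(\mathfrak g)$, and since $T(\mathfrak g)$ is a connected cocommutative Hopf algebra we have $T(\mathfrak g)=\mathcal U\big(\mop{Lie}(\mathfrak g)\big)$. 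Because $\mop{Lie}(\mathfrak g)$ is the free post-Lie algebra on the magma $(\mathfrak g,\rhd)$ (Remark \ref{rmk:freeD}), the identity map $(\mathfrak g,\rhd)\to(\mathfrak g,\rhd)$ lifts to a unique surjective post-Lie morphism $\Phi:\mop{Lie}(\mathfrak g)\twoheadrightarrow\mathfrak g$ restricting to the identity on generators. Its kernel $\mathfrak n$ is a post-Lie ideal, in particular a Lie ideal with $\mop{Lie}(\mathfrak g)/\mathfrak n\cong\mathfrak g$, so the two-sided ideal it generates, $J:=\mathfrak n\cdot T(\mathfrak g)=T(\mathfrak g)\cdot\mathfrak n$, satisfies $T(\mathfrak g)/J=\mathcal U(\mathfrak g)$.

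Next I would show that $J$ is stable under $\rhd$ from both sides, so that $\cdot$ and $\rhd$ descend to $\mathcal U(\mathfrak g)=T(\mathfrak g)/J$. As the kernel of a post-Lie morphism, $\mathfrak n$ satisfies $\mop{Lie}(\mathfrak g)\rhd\mathfrak n\subseteq\mathfrak n$ and $\mathfrak n\rhd\mop{Lie}(\mathfrak g)\subseteq\mathfrak n$; combined with the fact that every element of $\mathfrak d\big(T(\mathfrak g)\big)=\mop{Lie}(\mathfrak g)$ acts as a derivation for concatenation, this gives $a\rhd Z\in J$ whenever $a\in\mathfrak n$ and $Z\in T(\mathfrak g)$. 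The inclusion $T(\mathfrak g)\rhd J\subseteq J$ then follows by induction on the number of Lie factors of the left argument, using $L^\rhd_{xV}=L^\rhd_x\circ L^\rhd_V-L^\rhd_{x\rhd V}$ (a restatement of \eqref{Dtensor2}) and the fact, proved by a short induction, that $\rhd$ preserves the length of its second argument. For $J\rhd T(\mathfrak g)\subseteq J$ I would write a generator of $J$ as $a\,U$ with $a\in\mathfrak n$ and apply \eqref{Dtensor2}: $(aU)\rhd W=a\rhd(U\rhd W)-(a\rhd U)\rhd W$; the first term lies in $J$ since $a\in\mathfrak n$, while $a\rhd U\in J$ has strictly smaller length than $aU$, so the second term is handled by induction on length. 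With $J$ a two-sided $\rhd$-ideal, the axioms of Definition \ref{def:Dalgebra} pass to the quotient along the surjection, and $(\mathcal U(\mathfrak g),\cdot,\rhd)$ is a $D$-algebra.

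It remains to identify $\mathfrak d\big(\mathcal U(\mathfrak g)\big)$. Writing $q:T(\mathfrak g)\to\mathcal U(\mathfrak g)$ for the projection, the inclusion $\mathfrak g=q\big(\mop{Lie}(\mathfrak g)\big)=q\big(\mathfrak d(T(\mathfrak g))\big)\subseteq\mathfrak d\big(\mathcal U(\mathfrak g)\big)$ is immediate, $\mathfrak g$ generates $\mathcal U(\mathfrak g)$, and $\mop{Prim}\mathcal U(\mathfrak g)=\mathfrak g$ by the classical characteristic-zero theory. For the reverse inclusion $\mathfrak d\big(\mathcal U(\mathfrak g)\big)\subseteq\mop{Prim}\mathcal U(\mathfrak g)$ I would use that $\rhd$ is compatible with the coproduct: Proposition \ref{magma-deshuffle} descends to $\mathcal U(\mathfrak g)$, giving $U\rhd(VW)=(U_{(1)}\rhd V)(U_{(2)}\rhd W)$ for all $U,V,W$. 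Hence $L^\rhd_U$ is a derivation precisely when $\sum(U'\rhd V)(U''\rhd W)=0$ for all $V,W$, where $\overline\Delta U:=\Delta U-U\otimes\mathbf 1-\mathbf 1\otimes U=\sum U'\otimes U''$ is the reduced coproduct; one then concludes $\overline\Delta U=0$, i.e.\ $U\in\mop{Prim}\mathcal U(\mathfrak g)=\mathfrak g$, exactly as in the proof of Theorem \ref{thm:Dtensor}.

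The main obstacle is this last detection step: deducing $\overline\Delta U=0$ from the vanishing of $\sum(U'\rhd V)(U''\rhd W)$ for all $V,W$. In the free $D$-algebra $T(\mathfrak g)$ this is transparent because magma products of distinct generators are linearly independent, but in $\mathcal U(\mathfrak g)$ the product $\rhd$ is the genuine post-Lie product, which may be degenerate; the argument must therefore be run on the associated graded object, comparing the lowest-degree parts of $\overline\Delta U$ through the derivation identity. This is the step I would spend the most care on, the descent of the algebraic structure in the first two paragraphs being comparatively routine.
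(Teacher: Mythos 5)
Your first two paragraphs follow the same overall strategy as the paper's proof --- realize $\mathcal U(\mathfrak g)$ as $T(\mathfrak g)/J$ and show that $J$ is a two-sided ideal for the extended product $\rhd$ --- but implement the stability lemma differently. The paper computes directly with the generators $\jmath_{x,y}:=x\cdot y-y\cdot x-[x,y]$: left stability follows from the iterated Leibniz rule \eqref{Dtensor1} together with the first post-Lie axiom \eqref{eq1}, and right stability from $\jmath_{x,y}\rhd U=0$ (a consequence of \eqref{Dtensor2} and \eqref{eq2}) combined with Proposition \ref{prop:pre-glp} and an induction on the length of the left cofactor. You instead encode both post-Lie axioms into the universal property of Remark \ref{rmk:freeD}, so that $\mathfrak n=\ker\Phi$ is a two-sided post-Lie ideal. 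This is a legitimate alternative, and it can be streamlined in a way that also repairs a loose point: one has $\mathfrak n\rhd\mathfrak g\subseteq\mathfrak n$ (post-Lie ideal) and $\mathfrak n\rhd\mathfrak g\subseteq\mop{Lie}(\mathfrak g)\rhd\mathfrak g\subseteq\mathfrak g$ (an easy induction on word length), hence $\mathfrak n\rhd\mathfrak g\subseteq\mathfrak n\cap\mathfrak g=\{0\}$ because $\Phi$ restricts to the identity on $\mathfrak g$. Since elements of $\mathfrak n\subseteq\mop{Lie}(\mathfrak g)$ act as derivations of concatenation, this gives $L^\rhd_a=0$ for all $a\in\mathfrak n$, so $(a\cdot U)\rhd W=a\rhd(U\rhd W)-(a\rhd U)\rhd W=0$ and right stability needs no induction at all. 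The repair matters because your induction ``on length'' for $J\rhd T(\mathfrak g)\subseteq J$ is not well-founded as written: $\mathfrak n$ and $J$ are only filtered, not graded (the relations $\jmath_{x,y}$ are inhomogeneous), so $a\,U$ has no length, only a filtration degree. Note also that in this part you only ever use the true inclusion $\mop{Lie}(\mathfrak g)\subseteq\mathfrak d\big(T(\mathfrak g)\big)$, not the equality you quote.

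The genuine gap is the one you flagged yourself, and your diagnosis of where the difficulty sits is exactly right; but the situation is worse than a step requiring extra care: the detection step cannot be carried out, because the equality $\mathfrak d\big(\mathcal U(\mathfrak g)\big)=\mop{Prim}\mathcal U(\mathfrak g)$ is false in this generality. Take $\mathfrak g$ abelian with $\rhd=0$, a perfectly valid post-Lie algebra. Unwinding \eqref{Dtensor1}--\eqref{Dtensor2} gives $U\rhd V=\epsilon(U)\,V$ on $T(\mathfrak g)$, hence also on the quotient $\mathcal U(\mathfrak g)=S(\mathfrak g)$; for this product $L^\rhd_U$ is a derivation if and only if $\epsilon(U)=0$, so $\mathfrak d\big(\mathcal U(\mathfrak g)\big)$ is the whole augmentation ideal: for instance $x\cdot x$ lies in $\mathfrak d\big(\mathcal U(\mathfrak g)\big)$ but is not primitive. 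The same example defeats the sentence in the paper's proof of Theorem \ref{thm:Dtensor} asserting that the derivation elements of $T(M)$ are \emph{precisely} $\mop{Lie}(M)$: that claim is sound when the magmatic algebra is free (where distinct magmatic monomials are linearly independent, which is the situation of Subsection \ref{ssec:freeDalg}), but your own remark --- that freeness is what makes the detection transparent --- identifies exactly the hypothesis that is missing, and it is missing already in $T(\mathfrak g)$ itself, since $(\mathfrak g,\rhd)$ is not a free magmatic algebra, not merely after passing to the quotient. Consequently no associated-graded refinement can close your last step; what is provable in general is the inclusion $\mathfrak g\subseteq\mathfrak d\big(\mathcal U(\mathfrak g)\big)$, which, together with the fact that $\mathfrak g$ generates $\mathcal U(\mathfrak g)$, is what is actually needed downstream.

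For calibration: the paper's own proof of Proposition \ref{prop:quotient} establishes only the descent of the two products to the quotient and is silent on the identification of $\mathfrak d\big(\mathcal U(\mathfrak g)\big)$ with the primitives, so your attempt goes beyond it, and what you uncovered is a defect of the statement in this degenerate generality rather than of your argument. A related caveat affecting both your write-up and the paper's: Definition \ref{def:Dalgebra} quantifies axioms \eqref{D1}--\eqref{D2} over all of $\mathfrak d(D)$, and since $\mathfrak d\big(\mathcal U(\mathfrak g)\big)$ may strictly contain $\mathfrak g$, the assertion that the axioms ``pass to the quotient along the surjection'' verifies them only for elements of $\mathfrak g$; their validity on all of $\mathfrak d\big(\mathcal U(\mathfrak g)\big)$ requires a separate check.
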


\begin{proof}
Let us consider the $D$-algebra structure on $T(\mathfrak g)$ given in Paragraph \ref{par:freeD}. The ideal $J$ generated by $\{\jmath_{x,y}:=x.y-y.x-[x,y],\, x,y\in \mathfrak g\}$ is also a two-sided ideal for the product $\rhd$. To see this, choose any $x,y\in\mathfrak g$ and any $U,A,B$ in $T(\mathfrak g)$. By iterating \eqref{Dtensor1} we have
$$
	U\rhd(A\cdot\jmath_{x,y}\cdot B)
	=(U_{(1)}\rhd A)\cdot \jmath_{\scriptscriptstyle{U_{(2)}\rhd x,\, U_{(3)}\rhd y}} 
	\cdot (U_{(4)}\rhd B)\in J,
$$
using Sweedler's notation for the iterated deshuffle coproduct. Starting from $\jmath_{x,y}\rhd U=0$ which is a simple consequence of \eqref{Dtensor2}, we also have $(\jmath_{x,y}\cdot B)\rhd U=0$ by Proposition \ref{Dtensor2-iter}, taking primitiveness of $\jmath_{x,y}$ into account. Proposition \ref{Dtensor2-iter} also proves $(A\cdot \jmath_{x,y}\cdot B)\rhd U\in J$ by induction on the length of $A$. Hence the $D$-algebra structure on $T(\mathfrak g)$ gives naturally rise to a $D$-algebra structure on the quotient $\mathcal U(\mathfrak g)=T(\mathfrak g)/J$.
\end{proof}

\begin{remark}\rm
\label{rmk:adjfunctors}
From this follows the existence of a pair of adjoint functors between the categories of $D$-algebras and post-Lie algebras
$$
	\mathcal U(-): \text{postLie}\ \leftrightarrows \ D\text{-algebra}:\mathfrak g (-).
$$ 
In other words, there is a natural isomorphism 
$$
	\mathrm{Hom}_{\scriptscriptstyle{\text{postLie}}}\big(\mathfrak g (A),B\big)
	\rightarrow 
	\mathrm{Hom}_{\scriptscriptstyle{D\text{-algebra}}}\big(A, \mathcal U(B)\big).
$$
\end{remark}

\begin{remark}\rm
The Grossman--Larson product also makes sense on $\mathcal U(\mathfrak g)$, and is also given by \eqref{glp}. From Proposition \ref {gl-deshuffle}, it is compatible with the coproduct, making $\big(\mathcal U(\mathfrak g),\ast,\Delta)$ a Hopf algebra isomorphic to the enveloping algebra of $(\mathfrak g, \llbracket .\,,.\rrbracket)$. This has been first established in \cite{ELM2015}, see Proposition 3.3 therein. From Proposition \ref{prop:pre-glp} and \eqref{glp} we have
\begin{equation}
\label{product2}
	U\rhd(V\rhd W)=(U\ast V)\rhd W
\end{equation}
for any $U,V,W\in\mathcal U(\mathfrak g)$.
\end{remark}

Note however that both Hopf algebras have different antipodes. As an example, we compare the two antipodes for product $x \cdot y$ and $x \ast y$, $x,y \in \mathfrak{g}$
\allowdisplaybreaks
\begin{align*}
	S_*(x \cdot y)
	&= -x \cdot y - S_*(x)\ast y - S_*(y)\ast x \\
	&= y \ast x + x \rhd y\\
	&= y \cdot x + x \rhd y + y \rhd x \\
	&=S(x\cdot y)+ x \rhd y + y \rhd x .
\end{align*}
and 
\allowdisplaybreaks
\begin{align*}
	S(x \ast y)
	&= -x \ast y - S(x) \cdot y - S(y) \cdot x \\
	&=  y \cdot x - x \rhd y\\
	&= y \ast x - x \rhd y - y \rhd x \\
	&= S_*(x \ast y) - x \rhd y - y \rhd x .
\end{align*}

\noindent The following theorem is key to many upcoming computations.

\begin{theorem}
\label{thm:productantipode}
In the Hopf algebra $\big(\mathcal U(\mathfrak{g}),\cdot,\Delta_{\scriptscriptstyle{\shuffle}},\epsilon,S\big)$ the product can be expressed in terms of the Grossman--Larson product \eqref{glp} as follows:
\begin{equation}
\label{productnew}
	A \cdot B = A_{(1)} \ast \big( S_*(A_{(2)}) \rhd B\big), \qquad A,B \in U(\mathfrak{g}).
\end{equation} 
\end{theorem}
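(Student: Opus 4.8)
The plan is to verify the identity \eqref{productnew} directly, starting from the right-hand side and reducing it to $A \cdot B$ by means of the defining relation \eqref{glp} of the Grossman--Larson product, the ``associativity'' relation \eqref{product2}, and the Hopf-algebraic axioms for the antipode $S_*$. The crucial structural fact is that the two products $\cdot$ and $\ast$ on $\mathcal U(\mathfrak g)$ share the \emph{same} coproduct $\Delta_{\scriptscriptstyle{\shuffle}}$ (Proposition \ref{gl-deshuffle}), so that the antipode $S_*$ of the Grossman--Larson Hopf algebra satisfies $A_{(1)} \ast S_*(A_{(2)}) = \epsilon(A)\mathbf 1$ with respect to the very same Sweedler decomposition that governs $\cdot$.

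First I would unfold the definition \eqref{glp} inside the right-hand side, writing
\[
	A_{(1)} \ast \big(S_*(A_{(2)}) \rhd B\big)
	= (A_{(1)})_{(1)} \cdot \Big((A_{(1)})_{(2)} \rhd \big(S_*(A_{(2)}) \rhd B\big)\Big),
\]
and then use coassociativity of $\Delta_{\scriptscriptstyle{\shuffle}}$ to relabel the iterated coproduct as $A_{(1)} \otimes A_{(2)} \otimes A_{(3)}$, turning this into
\[
	A_{(1)} \cdot \Big(A_{(2)} \rhd \big(S_*(A_{(3)}) \rhd B\big)\Big).
\]

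Next I would invoke relation \eqref{product2}, namely $U \rhd (V \rhd W) = (U \ast V) \rhd W$, with $U = A_{(2)}$, $V = S_*(A_{(3)})$ and $W = B$, to collapse the nested action into
\[
	A_{(1)} \cdot \Big(\big(A_{(2)} \ast S_*(A_{(3)})\big) \rhd B\Big).
\]
Regrouping $A_{(2)} \otimes A_{(3)}$ as a single coproduct (again by coassociativity) and applying the antipode axiom for $S_*$ yields $A_{(2)} \ast S_*(A_{(3)}) = \epsilon(A_{(2)})\mathbf 1$; finally $\mathbf 1 \rhd B = B$ together with the counit axiom $A_{(1)}\epsilon(A_{(2)}) = A$ completes the reduction to $A \cdot B$.

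I expect the main obstacle to be merely the careful bookkeeping of Sweedler indices: one must check that the application of $\Delta_{\scriptscriptstyle{\shuffle}}$ hidden inside the first $\ast$ and the explicit $\Delta_{\scriptscriptstyle{\shuffle}}(A)$ combine correctly into the three-fold coproduct $A_{(1)} \otimes A_{(2)} \otimes A_{(3)}$, and that the antipode axiom is then applied to the last two legs. The entire conceptual content is carried by \eqref{product2}, the only place where the $D$-algebra compatibility enters; everything else is the interplay between the shared coproduct and the two antipodes. It is worth noting that cocommutativity of $\Delta_{\scriptscriptstyle{\shuffle}}$ is not needed for this direction, even though it underlies the Hopf-algebra structure established in Proposition \ref{gl-deshuffle}.
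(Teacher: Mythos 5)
Your proposal is correct and is essentially the paper's own proof: both unfold \eqref{glp}, apply \eqref{product2} to collapse the nested $\rhd$-action, regroup Sweedler legs by coassociativity, and conclude with the antipode axiom for $S_*$ (which holds because $\ast$ and $\cdot$ share the coproduct $\Delta_{\scriptscriptstyle{\shuffle}}$), followed by the counit axiom and $\mathbf{1}\rhd B = B$. The only difference is the immaterial order in which coassociativity and \eqref{product2} are invoked.
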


\begin{proof}
We use \eqref{glp} on the righthand side of \eqref{productnew}, which gives
\allowdisplaybreaks
\begin{align*}
	A_{(1)} \ast \big( S_*(A_{(2)}) \rhd B\big)
	&=A_{(1)(1)} \cdot \Big( A_{(1)(2)}   \rhd  \big(S_*(A_{(2)}) \rhd B\big)\Big)\\
	&\stackrel{\eqref{product2}}{=}A_{(1)(1)} \cdot \Big( \big(A_{(1)(2)} \ast S_*(A_{(2)})\big) \rhd B\Big)\\
	&=A_{(1)} \cdot \Big( \big(A_{(2)(1)} \ast S_*(A_{(2)(2)})\big) \rhd B\Big)\\
	&=A_{(1)} \cdot \big( m_\ast(\text{id} \otimes S_*)\Delta_{\scriptscriptstyle{\shuffle}} (A_{(2)}) \rhd B\big)\\
	&=A \cdot  B.
\end{align*}
In the third equality we used coassociativity. 
\end{proof}

\begin{remark}\rm
\label{rmk:GLproductrecursion}
Using \eqref{Dtensor1-iterated} and $x \in \mathfrak{g} \hookrightarrow \mathcal U(\mathfrak{g})$ being primitive, i.e., $\Delta_{\scriptscriptstyle{\shuffle}}(x)=x\otimes 1 + 1 \otimes x$, implying $S_\ast(x)=-x$, we find from \eqref{productnew} the recursion
\allowdisplaybreaks
\begin{align}
	x_1 \cdots x_n  
	&= x_1 * (x_2 \cdots x_n) - x_1 \rhd (x_2 \cdots x_n) \nonumber \\
	&\stackrel{\eqref{glp} }{=}x_1 * (x_2 \cdots x_n) 
	- \sum_{i=2}^n x_2 \cdots (x_1 \rhd x_i) \cdots x_n. \label{RecursionL1} 
\end{align} 
We used that for $x \in \mathfrak{g} \hookrightarrow \mathcal U(\mathfrak{g})$, the left-multiplication operator $L^\rhd_x$ acts as a derivation on elements in $\mathcal U(\mathfrak{g})$. Further below we will revisit these identities in the context of Gavrilov's $K$-map and special polynomials.
\end{remark}

For example, let $x,y,z \in \mathfrak{g}$, then, as these elements are primitive with respect to the unshuffle coproduct $\Delta_{\scriptscriptstyle{\shuffle}}$, we find
\allowdisplaybreaks
\begin{align*}
	x \cdot y 
		&= x \ast y - x \rhd y\\
	x \cdot y \cdot z 
		&= x \ast (y \cdot z) - x \rhd (y \cdot z)\\
		&= x \ast y \ast z - x \ast (y \rhd z) - x \rhd (y \ast z) + x \rhd (y \rhd z).
\end{align*}


\begin{remark}
\label{operators}\rm
From \eqref{product2} we immediately get
\begin{equation}
\label{product2a}
	(x_1 \ast \cdots \ast x_n) \rhd B = L^\rhd_{x_1} \cdots L^\rhd_{x_n} B.
\end{equation}
Hence, Grossman--Larson products of elements in $\mathfrak{g} \hookrightarrow \mathcal U(\mathfrak{g})$ transfer to compositions of left-multiplication maps. Therefore, any operator of the form $A \rhd -$, where $A \in \mathcal  U(\mathfrak{g})$, translates into a $L^\rhd$-polynomial. For example, for any $x,y\in\mathfrak g$,
$$
	(x\cdot y)\rhd B=(x \ast y - x \rhd y) \rhd B = (L^\rhd_x L^\rhd_y - L^\rhd_{x \rhd y}) \rhd B.
$$
In light of \eqref{product2a} and the recursion \eqref{RecursionL1}, we can extend the definition of the $L^\rhd$-operator to words, $x_1 \cdots x_n \in  \mathcal U(\mathfrak{g})$ by defining inductively
\begin{equation}
\label{RecursionL2}
	\hat{L}^\rhd_{x_1 \cdots x_n} := 
	L^\rhd_{x_1}  \hat{L}^\rhd_{x_2 \cdots x_n} - \sum_{i=2}^n \hat{L}^\rhd_{x_2 \cdots x_1 \rhd x_i \cdots x_n}.
\end{equation}
One checks that $\hat{L}^\rhd$ is an algebra morphism from $(\mathcal U(\mathfrak{g}),\ast)$ into $\text{End}(\mathcal U(\mathfrak{g}))$, that is,
\begin{equation}
\label{RecursionL3}
	\hat{L}^\rhd_{A \ast B} = \hat{L}^\rhd_{A}  \hat{L}^\rhd_{B},
\end{equation}
for $A,B \in \mathcal  U(\mathfrak{g})$. For example
\allowdisplaybreaks
\begin{align*}
	\hat{L}^\rhd_{x_1 \ast x_2} = \hat{L}^\rhd_{x_1 x_2 + x_1 \rhd x_2 } 
	&= \hat{L}^\rhd_{x_1 x_2} + L^\rhd_{x_1 \rhd x_2 } \\
	&= L^\rhd_{x_1}  L^\rhd_{x_2} - L^\rhd_{x_1 \rhd x_2} + L^\rhd_{x_1 \rhd x_2}\\
	&=L^\rhd_{x_1}  L^\rhd_{x_2}.
\end{align*}
Further below, we will see that these polynomials are closely related to Gavrilov's $K$-map.
\end{remark}


\subsection{Reminder on post-Lie Magnus expansion}
\label{sec:plm}

We consider now the free $D$-algebra generated by a set $A$, i.e., the universal enveloping algebra $\mathcal{F}^{pl}_A := \mathcal{U}\big(\mathcal{L}(\mathcal{T}^{pl}_A)\big)$ of the free post-Lie algebra $\big(\mathcal{L}(\mathcal{T}^{pl}_A), [.\,,\,.], \rhd \big)$, graded by the number of vertices of the forests\footnote{Here the magmatic product is not precised: in view of Remark \ref{lg}, it can be either the right Butcher product $\diamond$ or the left grafting $\curvearrowright$.}. Denote by $\widehat{\mathcal{U}\big(\mathcal{L}(\mathcal{T}^{pl}_A)\big)}$ its completion with respect to the grading. The unshuffle coproduct, $\Delta_\shuffle$, is naturally extended to the completion. The set $\mop{Prim}(\mathcal{F}^{pl}_A)$ consists in primitive elements, whereas $G(\mathcal{F}^{pl}_A)$ denotes the set  of group-like elements:
\allowdisplaybreaks
\begin{align}
		\mop{Prim}(\mathcal{F}^{pl}_A) 
		& := \big\{\alpha \in \widehat{\mathcal{U}(\mathcal{L}\big(\mathcal{T}^{pl}_A)\big)}\;|\, \Delta_\shuffle(\alpha) 
		=  \mathbf{1} \otimes \alpha + \alpha \otimes  \mathbf{1}\big\}
		=\widehat{\mathcal{L}(\mathcal{T}^{pl}_A)},\\
		G(\mathcal{F}^{pl}_A) 
		& := \big\{\alpha \in \widehat{\mathcal{U}\big(\mathcal{L}(\mathcal{T}^{pl}_A)\big)}\;|\, \Delta_\shuffle(\alpha) 
		= \alpha \otimes \alpha \big\}.&
\end{align}
	Both products on $\mathcal{U}\big(\mathcal{L}(\mathcal{T}^{pl}_A)\big)$ --the concatenation and the Grossman--Larson product-- can also be extended to products on the completion $\widehat{\mathcal{U}(\mathcal{L}\big(\mathcal{T}^{pl}_A)\big)}$. As a result, two different exponential functions can be defined on $\widehat{\mathcal{U}\big(\mathcal{L}(\mathcal{T}^{pl}_A)\big)}$, namely:
\allowdisplaybreaks
\begin{align*}
		\exp^{*}(y) 
		& = \sum_{n=0}^{\infty}{\frac{y^{*n}}{n!}} 
		= \mathbf{1} + y + \frac{1}{2} y \ast y + \frac{1}{6} y \ast y \ast y + \cdots, &\\
		\exp^{\textstyle\cdot}(y) 
		& = \sum_{n=0}^{\infty}{\frac{y^{{\textstyle\cdot} n}}{n!}} 
		= \mathbf{1} + y + \frac{1}{2} y {\textstyle{\cdot}} y + \frac{1}{6} y {\textstyle{\cdot}} y {\textstyle{\cdot}} y + \cdots.&
\end{align*}
On a manifold, the exponential $\exp^{*}(y)$ will be seen to represent the exact flow of a vector field, while $\exp^{\textstyle\cdot}(y)$ represents the flow along geodesics.

Both these exponential functions map $\mop{Prim}(\mathcal{F}^{pl}_A)$ bijectively onto $G(\mathcal{F}^{pl}_A)$. See \cite{ELM2015} for details. The post-Lie Magnus expansion $\chi$ is the bijective map from $\widehat{\mathcal{L}(\mathcal{T}^{pl}_A)}$ onto itself defined by the following relation between exponentials:
\begin{equation}
\label{exp=exp}
	\exp^{*}\big(\chi(y)\big) = \exp^{\textstyle\cdot}(y), 
\end{equation} 
namely:
\begin{equation}
\label{pl-Magnus} 
	\chi(y) = \log^{\ast}\big(\exp^{\textstyle\cdot}(y)\big).
\end{equation}  

The post-Lie Magnus expansion can be characterised by taking the derivation with respect to $t$ of $\exp^*(\chi(ty)) = \exp^{\textstyle\cdot}(ty)$. Recall the  $\text{dexp}$-formulas \cite{BCOR2009} for derivations of the $\exp$-map in a non-commutative setting
\allowdisplaybreaks
\begin{align}
	\frac{d}{dt} \exp(\Omega(t)) 
	&= \text{dexp}_{\Omega(t)}(\dot{\Omega}(t))\exp(\Omega(t)) \label{dexp1}\\ 
	&= \exp(\Omega(t))\text{dexp}_{-\Omega(t)}(\dot{\Omega}(t)). \label{dexp2}
\end{align}
Using the group-likeness of the two exponentials, $\exp^*(\chi(ty))$ and $\exp^\centerdot(ty)$, yields:
\allowdisplaybreaks
\begin{align}
	\frac{d}{dt}\exp^*(\chi(ty))
	&= \exp^\centerdot(ty) {\textstyle\cdot} y\\	
	&\stackrel{\eqref{productnew}}{=} \exp^\centerdot(ty) \ast \Big( S_*\big(\exp^\centerdot(ty)\big) \rhd y\Big)\\
	&\stackrel{\eqref{exp=exp}}{=} \exp^*(\chi(ty))  \ast \Big(S_*\big(\exp^*(\chi(ty))\big)  \rhd y\Big)\\
	&= \exp^*(\chi(ty)) \ast \Big(\exp^*(-\chi(ty)) \rhd y\Big), 	\nonumber
\end{align}
from which we deduce that 
$$
	\exp^*(-\chi(ty)) \ast \frac{d}{dt}\exp^*(\chi(ty))
	= \mathrm{dexp}^{*}_{-\chi(ty)}(\dot\chi(ty)) 
	= \exp^*(-\chi(ty)) \rhd y.
$$
Therefore, $\chi(ty)$ solves the Magnus-type differential equation
\begin{equation}
\label{plminv:eqn}
	\dot\chi(ty) = \mathrm{dexp}^{*-1}_{-\chi(ty)} \big(\exp^*\big(\!\!-\chi(ty)\big)\rhd y \big),
	\quad \chi(0) = 0.
\end{equation} 
The classical formula $\frac{d}{dt}\exp(-A(t)) = - \exp(-A(t)) \frac{d}{dt}\big(\exp(A(t))\big)\exp(-A(t))$ implies 
for the identity $\mathrm{dexp}^{*}_{-\chi(ty)}(\dot\chi(ty)) = \exp^*(-\chi(ty))  \rhd y$ that the function
\begin{equation}
\label{paralleltransp}
	\alpha(y,t):= \exp^*(-\chi(ty))  \rhd y
\end{equation} 
satisfies the differential equation
\allowdisplaybreaks
\begin{align}
	\lefteqn{\frac{d}{dt}\alpha(y,t) = \frac{d}{dt}\exp^*(-\chi(ty))  \rhd y } \\
	&= \Big(\frac{d}{dt}\exp^*(-\chi(ty))\Big)  \rhd x\\
	&= \Big(-\exp^*(-\chi(ty))*\big(\frac{d}{dt}\exp^*(\chi(ty))\big) * \exp^*(-\chi(ty))\Big)  \rhd y\\
	&= \Big(-\mathrm{dexp}^{*}_{-\chi(ty)}(\dot\chi(ty)) * \exp^*(-\chi(t y))\Big)  \rhd y\\
	&= -\mathrm{dexp}^{*}_{-\chi(ty)}(\dot\chi(ty)) \rhd \Big(\exp^*(-\chi(ty)) \rhd y\Big)\\
	&= - \alpha(y,t) \rhd \alpha(y,t).
\end{align}
Hence, $\alpha(y,t)$ satisfies the {\textit{post-Lie flow equation}} 
\begin{equation}
\label{eq:phi}
\left\{\begin{array}{l}
	\alpha(y,0)= y\\[0.2cm]
	\dfrac{d}{dt}\alpha(y,t)=-\alpha(y,t)\rhd \alpha(y,t).
\end{array}\right.
\end{equation}
We can therefore describe the Grossman--Larson exponential $\exp^*(\chi(ty))$ as a solution of a linear non-autonomous initial value problem
\begin{equation}
\label{theEq2}
	\frac{d}{dt}\exp^*(\chi(ty)) = \exp^*(\chi(ty)) * \alpha(y,t). 
\end{equation}

The inclusion of any element $y$ into the completed post-Lie algebra $\widehat{\mathcal{L}(\mathcal{T}^{pl}_A)}$ yields a unique injective morphism from the completed free magmatic algebra $\widehat{M_y}$ into it. We define now the map $\delta_{y}:\widehat{M_y}\to \widehat{M_y}$ to be the unique derivation, with respect to the magmatic product $\rhd$, such that $\delta_{y}y=y\rhd y$. For instance,
$$
	\delta_{y} (\delta_{y} y) = \delta_{y} (y \rhd y) = (y \rhd y) \rhd y + y \rhd  (y \rhd y).
$$ 
It is then clear that $e^{t\delta_{y}}$ is a one-parameter group of automorphisms for the product $\rhd$. This yields
$$
	\frac{d}{dt}e^{-t\delta_{y}}y
	=-e^{-t\delta_{y}}\delta_{y}(y)
	=-e^{-t\delta_{y}}(y \rhd y)
	=-(e^{-t\delta_{y}}y \rhd e^{-t\delta_{y}}y).
$$
This means that the function $t\mapsto e^{-t\delta_{y}}y$ solves as well the post-Lie flow equation \eqref{eq:phi} from which we deduce the intriguing identity  
\begin{equation}
\label{intriguing}
	\exp^*(-\chi(ty))  \rhd y = e^{-t\delta_{y}}y.
\end{equation}
The right-hand side of \eqref{intriguing} is therefore the purely magmatic expression of the map $\alpha$ introduced by A. V. Gavrilov in \cite{Gavrilov2006}, the left-hand side is its post-Lie reformulation.

\smallskip

The inverse of $\chi(ty)$, which we denote $\theta(ty)$, is obviously characterised by 
\begin{equation}
\label{exprelation1}
	\exp^*(ty) = \exp^{\textstyle\cdot}(\theta(ty)).
\end{equation}
From this we deduce in an analogous manner the differential equation
\begin{equation}
\label{plm:eqn}
	\dot\theta(ty) 
	= \mathrm{dexp}^{{\textstyle\cdot} -1}_{-\theta(ty)} \big(\exp^{\textstyle\cdot}\big(\theta(ty)\big) \rhd y \big), 
	\quad \theta(0) = 0.
\end{equation}
The first terms of the post-Lie Magnus expansion $\chi$ and its inverse $\theta$ are given by \cite[Appendix~A]{AEM21}
\begin{eqnarray}
	\chi(y)
	&=&y-\frac 12 y\rhd y+\frac{1}{12}y \rhd (y \rhd y) + \frac{1}{4}(y \rhd y) \rhd y + \frac{1}{12}[y \rhd y, y]+\cdots\nonumber\\
	&=&y-\frac 12 y\rhd y+\frac{1}{6}y \rhd (y \rhd y) + \frac{1}{6}(y \rhd y) \rhd y + \frac{1}{12}\llbracket y \rhd y, y\rrbracket+\cdots 
	\label{chi-order-three}\\[0.2cm]
	\theta(y)
	&=& y+\frac 12 y\rhd y+\frac{1}{6} y \rhd (y \rhd y) + \frac{1}{12} [y, y \rhd y]+\cdots\nonumber\\
	&=& y+\frac 12 y\rhd y+\frac{1}{12} y \rhd (y \rhd y) +  \frac{1}{12}(y \rhd y) \rhd y 
	+ \frac{1}{12} \llbracket y, y \rhd y\rrbracket+\cdots
	\label{theta-order-three}
\end{eqnarray}
Note that \eqref{chi-order-three} and \eqref{theta-order-three} follow from identity \eqref{twoLie}, which relates the Lie brackets 
$$
	(y \rhd y) \rhd y - y \rhd (y \rhd y) +[y \rhd y, y] = \llbracket y \rhd y, y\rrbracket.
$$
We remark that the post-Lie Magnus expansion $\chi$ and its inverse $\theta$ make sense in any complete graded post-Lie algebra.

\begin{remark}
Returning to the linear initial value problem \eqref{theEq2}, we see that $\exp^*(\chi(ty))=\exp^\ast\big({\Omega}(\alpha(y,t))\big),$ and therefore
$$
	\chi(ty)=\Omega(\alpha(y,t)),
$$
where the Magnus expansion 
$$
	{\Omega}\big(\alpha(y,t)\big)
	=\int_0^tds\ \sum_{n \ge 0}(-1)^n \frac{B_n}{n!} \ad^{*(n)}_{{\Omega}} \alpha(y,s).
$$
Here $B_n$ are the Bernoulli numbers, $B_0=1$, $B_1=1/2$, $B_2=1/6$, $B_3=0,\ldots$ and $\ad^{*}$ refers to the fact that the Grossman-Larson Lie bracket is used. Note that the  $(-1)^n$ factor on the righthand side affects only the first Bernoulli number $B_1$. Computing up to fourth order in $t$ we find
\begin{align*}
	\lefteqn{\Omega(\alpha(y,t))
	=ty - \frac{t^2}{2} y \rhd y 
					+ \frac{t^3}{6}\big( (y \rhd y) \rhd y 
					+ y \rhd (y \rhd y)\big) 
		 			+ \frac{t^3}{12} \llbracket y \rhd y , y\rrbracket}  \\
	&- \frac{t^4}{24}\big( ((y \rhd y) \rhd y) \rhd y 
					+ (y \rhd (y \rhd y) ) \rhd y 
					+ 2 (y \rhd y) \rhd (y \rhd y) \\
	&					
					+ y \rhd ((y \rhd y)  \rhd y)
					+ y \rhd (y \rhd (y \rhd y) )\big)	
	 + \frac{t^4}{24} \llbracket y, (y \rhd y) \rhd y 
					+ y \rhd (y \rhd y) \rrbracket 
	   				+ \cdots
		 \label{betamatch}
\end{align*}
This should be compared with the terms in \eqref{chi-order-three}.
\end{remark}

\subsection{Gavrilov's $K$-map}
\label{ssec:GavrilovK}

We recall here A.~V.~Gavrilov's $K$-map \cite{Gavrilov2006, Gavrilov2010}. We give an explicit formula for its inverse in terms of set partitions, and we show the link with noncommutative Bell polynomials \cite{EFLM2014,MunLun2011,SchimRid1996}. Finally, we recall the differential equation satisfied by the generating series \cite[Lemma 2]{Gavrilov2006}
$$
	K(\exp^{\textstyle\cdot}( ty ))=\sum_{n\ge 0} \frac{t^n}{n!} K(y^{{\textstyle\cdot} n}).
$$
As a consequence, its logarithm can be expressed in terms of the Magnus expansion (in its right-sided version) applied to Gavrilov's $\alpha$-map \cite[Section 5]{Gavrilov2006}. \ignore{The latter, as well as its close relative, the map denoted $\lambda$ \cite[Lemma 2]{Gavrilov2006}, is best understood in the planar rooted tree picture with the right-Butcher product $\diamond$ and left-grafting $\rhd$.}


\subsubsection{The $K$-map and the $D$-algebra structure}
\label{sssec:K-D}

Let $(M,\rhd)$ be any magmatic algebra, and let $T(M)$ be the tensor algebra over $M$, endowed with the $D$-algebra structure of Paragraph \ref{par:freeD}. For any element $y \in M$, let $\tau^{\rhd}_y$ be the linear endomorphism of $T(M)$ defined by $\tau^{\rhd}_y(w)=y \rhd w$ for any $w\in T(M)$. By \eqref{Dtensor1}, it is a derivation. The map 
$$
	K: T(M) \to T(M)
$$ 
is recursively defined by $K(1)=1$, $K(y)=y$ for any $y \in M$, and
\begin{equation}
\label{defK}
	K(y U)=y K(U)-K\circ \tau^{\rhd}_y(U)
\end{equation}
for any $y \in M$ and $U \in T(M)$. In particular,
$$	
	K(y_1 y_2)=y_1y_2 - y_1 \rhd y_2
$$
and
\begin{eqnarray*}
	\lefteqn{K(y_1 y_2 y_3)
	=y_1 K(y_2 y_3)-K \circ \tau^{\rhd}_{y_1}(y_2 y_3)}\\
	&=&y_1y_2 y_3 - y_1(y_2 \rhd y_3) - (y_1 \rhd y_2) y_3 - y_2 (y_1 \rhd y_3)
	 + y_2 \rhd(y_1 \rhd y_3) + (y_1 \rhd y_2) \rhd y_3.
\end{eqnarray*}
The map $K$ is clearly invertible, as $K(U)-U$ is a linear combination of terms of strictly smaller length than the length of $U \in T(M)$. The inverse $K^{-1}$ is uniquely determined by $K^{-1}(1)=1$, $K^{-1}(y)=y$ for any $y \in M$, and the recursive relation
\begin{equation}
\label{rec-Kinv}
	(L_y + \tau^{\rhd}_y) \circ K^{-1} = K^{-1}\circ L_y
\end{equation}
for any $y \in M$, where $L_y: T(M) \to T(M)$ is defined by $L_y(U):=y U$. Recall the Grossman--Larson product \eqref{glp} on $T(M)$ given in Definition \ref{def:GLprod}.

\begin{theorem}
\label{thm:isoK}
The $K$-map is a unital algebra isomorphism from $T(M)$ equipped with the Grossman--Larson product, $\big(T(M),\ast\big)$, onto the tensor algebra $\big(T(M),\cdot\big)$.
\end{theorem}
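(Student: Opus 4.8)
The map $K$ has already been shown to be a linear bijection of $T(M)$, and $K(\mathbf 1)=\mathbf 1$ holds by definition, so the unital and invertibility parts of the statement are in hand. The entire content of the theorem is therefore the multiplicativity
$$
	K(U\ast V)=K(U)\cdot K(V), \qquad U,V\in T(M).
$$
The plan is to prove this by induction on the length $\ell$ of $U$; since both $\ast$ and $\cdot$ are bilinear and $K$ is linear, it suffices to treat a monomial $U$, the induction hypothesis being read for \emph{all} elements (not merely monomials) of length $<\ell$ and all $V$. The base case $\ell=0$, i.e. $U=\mathbf 1$, is immediate because $\mathbf 1\ast V=V$ and $K(\mathbf 1)=\mathbf 1$.

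The key observation that drives the argument concerns a single generator $y\in M$. As every element of $M$ is primitive for $\Delta_\shuffle$ and $\mathbf 1\rhd W=W$, the Grossman--Larson product \eqref{glp} collapses to
$$
	y\ast W=y_{(1)}(y_{(2)}\rhd W)=yW+y\rhd W=yW+\tau^{\rhd}_y(W).
$$
Comparing this with the defining recursion \eqref{defK}, namely $K(yW)=yK(W)-K\circ\tau^{\rhd}_y(W)$, the two correction terms cancel upon applying the linear map $K$, and we obtain the \emph{key lemma}
$$
	K(y\ast W)=y\cdot K(W),\qquad y\in M,\ W\in T(M),
$$
where the right-hand side is plain concatenation. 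This is the post-Lie incarnation of the statement that $K$ intertwines left $\ast$-multiplication by a generator with left concatenation by that generator, and it is the heart of the proof.

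For the inductive step I would write a length-$\ell$ monomial as $U=yU'$ with $y\in M$ and $\ell(U')=\ell-1$. Rearranging the displayed identity gives $yU'=y\ast U'-\tau^{\rhd}_y(U')$, and associativity of $\ast$ then yields $U\ast V=y\ast(U'\ast V)-\tau^{\rhd}_y(U')\ast V$. Applying $K$, the first summand becomes $y\cdot K(U'\ast V)=y\cdot K(U')\cdot K(V)$ by the key lemma followed by the induction hypothesis (as $\ell(U')=\ell-1$). For the second summand the crucial point is that $\tau^{\rhd}_y=y\rhd-$ is length-preserving on $T(M)$ — this follows from the derivation property \eqref{Dtensor1} together with $y\rhd w\in M$ for $w\in M$ — so $\tau^{\rhd}_y(U')$ again has length $\ell-1$ and the induction hypothesis applies, giving $K\big(\tau^{\rhd}_y(U')\big)\cdot K(V)$. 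Recombining, $K(U\ast V)=\big(y\cdot K(U')-K\circ\tau^{\rhd}_y(U')\big)\cdot K(V)$, and by \eqref{defK} the bracketed factor is exactly $K(yU')=K(U)$; hence $K(U\ast V)=K(U)\cdot K(V)$.

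The only genuinely delicate ingredient is the length bookkeeping that makes the induction well-founded: one must be certain that both $U'$ and $\tau^{\rhd}_y(U')$ are strictly shorter than $U$. The first is clear, and the second rests precisely on the length-preservation of $y\rhd-$ noted above; everything else is a matter of unwinding \eqref{glp} and \eqref{defK}. An essentially equivalent packaging, which I would mention as a sanity check, is to first observe that the key lemma together with $yU=y\ast U-\tau^{\rhd}_y(U)$ shows that $\ast$-products of elements of $M$ span $T(M)$, and then to verify multiplicativity directly on such $\ast$-monomials: iterating the key lemma gives $K(y_1\ast\cdots\ast y_m\ast V)=y_1\cdots y_m\cdot K(V)=K(y_1\ast\cdots\ast y_m)\cdot K(V)$, whence the general case follows by linearity.
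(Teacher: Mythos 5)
Your proof is correct and follows essentially the same route as the paper: induction on the length of $U$, with the length-one identity $K(y\ast W)=y\cdot K(W)$ (your ``key lemma'', which the paper attributes directly to \eqref{defK}) driving the inductive step via $xU = x\ast U - x\rhd U$, associativity of $\ast$, and reassembly through \eqref{defK}. Your explicit attention to the length-preservation of $\tau^{\rhd}_y$ and to reading the induction hypothesis on general (non-monomial) elements makes precise bookkeeping that the paper leaves implicit, but the argument is the same.
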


\begin{proof}
Recall that for $U,V \in T(M)$ we have $U \ast V:=U_{(1)}(U_{(2)} \rhd V).$ We prove 
$$
	K(U \ast V)=K(U)\cdot K(V)
$$ 
by induction on the length of the tensor $U \in T(M)$. The length zero case is trivial, the length one case is given by \eqref{defK}. Now let us compute, with $x\in M$, using the induction hypothesis:
\begin{eqnarray*}
	K\big((xU)\ast V\big)
	&=&K(x \ast U \ast V) - K\big((x \rhd U)\ast V)\\
	&=&K(x)\cdot K(U \ast V) - K(x \rhd U)\cdot K(V)\\
	&=&\big(xK(U)-K(x \rhd U)\big)\cdot K(V)\\
	&=&K(xU) \cdot K(V).
\end{eqnarray*}
\end{proof}

\subsubsection{An explicit formula for $K^{-1}$ in terms of set partitions}

An explicit formula for $K^{-1}(y_1\cdots y_n)$ is available in terms of set partitions of the strictly ordered set $[n]:=\{1,\ldots, n\}$: for any such partition $\pi$, let us denote its blocks by $B_1,\ldots, B_{|\pi|}$, where we have ordered them according to their maximum:
$$	
	\mop{max}B_1<\cdots <\mop{max} B_{|\pi|}.
$$
For any block $B$, say, of size $\ell$, define the element $y_B\in M$ by
$$
	y_B:=y_{b_1}\rhd\Big(y_{b_2}\rhd\big(\cdots 
	\rhd(y_{b_{\ell-1}}\rhd y_{b_\ell}) \cdots \big)\Big),
$$
where the elements $b_1<\cdots < b_\ell$ of $B$ are arranged in increasing order. For any partition $\pi$, let $(y_1\cdots y_n)^\pi\in T(M)$ be the element given by
$$
	(y_1\cdots y_n)^\pi:=y_{B_1}\cdots y_{B_{|\pi|}}.
$$

\begin{proposition}
\label{Kinverse}
$$
	K^{-1}(y_1\cdots y_n)=\sum_{\pi\smop{ set partition }\atop \smop{ of } \{1,\ldots, n\}}(y_1\cdots y_n)^\pi.
$$
\end{proposition}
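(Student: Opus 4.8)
The plan is to set $F(y_1\cdots y_n):=\sum_\pi (y_1\cdots y_n)^\pi$ for the right-hand side of the claimed identity, and to prove $F=K^{-1}$ by checking that $F$ obeys the recursion \eqref{rec-Kinv} that characterises $K^{-1}$ uniquely. Rewriting \eqref{rec-Kinv} as $K^{-1}(yU)=y\,K^{-1}(U)+y\rhd K^{-1}(U)$ for $y\in M$ and $U\in T(M)$, and recalling the initialisation $K^{-1}(1)=1$, $K^{-1}(y)=y$, it suffices to verify the same base cases for $F$ (immediate, since the empty partition gives $1$ and the unique partition of $\{1\}$ gives $y_{\{1\}}=y_1$) and the same recursion. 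Because each letter $y_i$ occurs exactly once in every monomial $(y_1\cdots y_n)^\pi$, the identity is multilinear in $y_1,\dots,y_n$, so it is enough to treat products of elements of $M$; I would therefore argue by induction on $n$.

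For the inductive step I would classify the set partitions $\pi$ of $\{1,\dots,n\}$ according to the block $B\ni 1$. Since $1$ is the global minimum it is the least element of $B$; write $B'=B\setminus\{1\}$. When $B=\{1\}$ is a singleton, its maximum $1$ is smaller than every other block maximum, so $y_{\{1\}}=y_1$ occupies the first position; deleting it leaves a partition $\sigma$ of $\{2,\dots,n\}$ with $(y_1\cdots y_n)^\pi=y_1\cdot(y_2\cdots y_n)^\sigma$, and summing over these $\pi$ produces the term $y_1\cdot F(y_2\cdots y_n)$. When $|B|\ge 2$, removing $1$ yields a partition $\sigma$ of $\{2,\dots,n\}$ in which $B'$ is a block; since $\max B=\max B'\ge 2$, the block $B$ sits in the same position of the maximum-ordering as $B'$, while all other blocks are shared by $\pi$ and $\sigma$ with unchanged maxima. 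As $y_B=y_1\rhd y_{B'}$, the word $(y_1\cdots y_n)^\pi$ is obtained from $(y_2\cdots y_n)^\sigma$ by replacing the single factor $y_{B'}$ with $y_1\rhd y_{B'}$ in place.

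The key step is to recognise the sum of these second-type contributions as the action of the derivation $\tau^\rhd_{y_1}$. For fixed $\sigma$, the partitions $\pi$ with $|B|\ge 2$ and $\pi\setminus\{1\}=\sigma$ correspond bijectively to the choice of a block of $\sigma$ into which $1$ is inserted, so their total contribution is $\sum_{B'\text{ block of }\sigma} y_{B_1}\cdots(y_1\rhd y_{B'})\cdots y_{B_{|\sigma|}}$. By the Leibniz rule \eqref{Dtensor1}, $\tau^\rhd_{y_1}=y_1\rhd(-)$ is a derivation for concatenation, whence this sum equals $y_1\rhd(y_2\cdots y_n)^\sigma$. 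Summing over $\sigma$ gives $y_1\rhd F(y_2\cdots y_n)$, and adding the two cases yields $F(y_1\cdots y_n)=y_1\cdot F(y_2\cdots y_n)+y_1\rhd F(y_2\cdots y_n)$, i.e.\ exactly the recursion \eqref{rec-Kinv}; uniqueness then forces $F=K^{-1}$.

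I expect the only real obstacle to be bookkeeping rather than conceptual: one must verify carefully that the maximum-ordering of blocks is preserved under insertion and deletion of the minimal element $1$, so that the factor $y_{B'}$ is substituted in its correct position, and one must confirm that the Leibniz expansion of $\tau^\rhd_{y_1}$ matches the sum over blocks with each block of $\sigma$ contributing exactly once. Once this correspondence is pinned down, the derivation property \eqref{Dtensor1} does all the algebraic work.
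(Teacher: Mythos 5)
Your proof is correct and is essentially the paper's own argument: both proceed by induction on the length via the recursion \eqref{rec-Kinv}, splitting set partitions according to whether the block containing the minimal element is a singleton (producing the concatenation term) or has size at least two (producing the $\rhd$-term through the Leibniz rule \eqref{Dtensor1}, after noting that inserting the minimum into a block changes neither its maximum nor the block ordering). The only cosmetic difference is that you phrase the induction as verifying that the partition sum satisfies the defining recursion of $K^{-1}$ and then invoking uniqueness, whereas the paper expands $K^{-1}(y_0y_1\cdots y_n)$ directly via the recursion and regroups the terms into the partition sum.
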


\begin{proof}
We prove this result by induction on the length $n$. The cases $n=0$ and $n=1$ are trivial, the cases $n=2$ and $n=3$ read
\begin{align*}
	K^{-1}(y_1y_2) 
	&=y_1y_2 + y_1\rhd y_2,\\
	K^{-1}(y_1y_2y_3)
	&=y_1y_2y_3 
	+y_1 (y_2\rhd y_3)
	+y_2 (y_1\rhd y_3)
	+(y_1\rhd y_2)y_3 
	+ y_1\rhd (y_2 \rhd y_3).
\end{align*}
In the case of $n-2$, we have a sum of two terms which correspond to the two partitions of the set $\{1,2\}$, one with two blocks and one with two single blocks, respectively. The case $n=3$ includes all set partitions of order three. Supposing the result is true up to length $n$ we have, using \eqref{rec-Kinv}:
\begin{align*}
	\lefteqn{K^{-1}(y_0 y_1\cdots y_n)=y_0 K^{-1}(y_1\cdots y_n) +y_0\rhd K^{-1}(y_1\cdots y_n)}\\
	&= \sum_{\pi \smop{ set partition }\atop \smop{ of } \{1,\ldots, n\}} y_0(y_1\cdots y_n)^\pi
	+y_0\rhd (y_1\cdots y_n)^\pi\\
	&= \sum_{\rho \smop{ set partition }\atop \smop{ of } \{0,\ldots, n\},\,B_1=\{0\}}(y_0y_1\cdots y_n)^\rho\\
	& \hskip 5mm +\sum_{\pi\smop{ set partition }\atop \smop{ of } \{1,\ldots, n\}}
					\sum_{j=0}^{|\pi|}y_{B_1}\cdots y_{B_{j-1}}y_{B_j\sqcup\{0\}}y_{B_{j+1}}\cdots y_{B_{|\pi|}}\\
	&=\sum_{\rho\smop{ set partition }\atop \smop{ of } \{0,\ldots, n\},\,B_1=\{0\}}(y_0 y_1\cdots y_n)^\rho
	+ \sum_{\rho\smop{ set partition }\atop \smop{ of } \{0,\ldots, n\},\,B_1\neq \{0\}}(y_0 y_1\cdots y_n)^\rho\\
	&=\sum_{\rho\smop{ set partition }\atop \smop{ of } \{0,\ldots, n\}}(y_0y_1\cdots y_n)^\rho.
\end{align*}
\end{proof}

\begin{corollary}
\label{rec-K}
Gavrilov's $K$-map is recursively given by
\begin{align}
	K(y_1\cdots y_n)
	&\stackrel{\eqref{RecursionL1}}{=} y_1K(y_2 \cdots y_n) 
			- \sum_{i=2}^n K(y_2 \cdots (y_1 \rhd y_i) \cdots y_n) \label{rec-K-univ1} \\
	&=y_1\cdots y_n-\sum_{\pi \smop{ set partition } \atop \smop{ of } \{1,\ldots, n\},\, \pi\neq\hat 0}
	K\big((y_1\cdots y_n)^\pi\big), \label{rec-K-univ}
\end{align}
where $\hat 0$ stands for the unique partition of $\{1,\ldots,n\}$ with $n$ blocks.
\end{corollary}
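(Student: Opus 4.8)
The plan is to read off each of the two identities from results already in hand, treating them independently. For \eqref{rec-K-univ1} I would apply the defining recursion \eqref{defK} with $y=y_1$ and $U=y_2\cdots y_n$, giving $K(y_1\cdots y_n)=y_1K(y_2\cdots y_n)-K\bigl(\tau^\rhd_{y_1}(y_2\cdots y_n)\bigr)$. The sole task is to expand $\tau^\rhd_{y_1}(y_2\cdots y_n)$. Since $\tau^\rhd_{y_1}$ is a derivation for the concatenation product --- this is precisely the content of \eqref{Dtensor1}, already noted when $\tau^\rhd_y$ was introduced --- the Leibniz rule yields $\tau^\rhd_{y_1}(y_2\cdots y_n)=\sum_{i=2}^n y_2\cdots(y_1\rhd y_i)\cdots y_n$, and applying the linear map $K$ term by term gives \eqref{rec-K-univ1}. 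Equivalently, one may apply the algebra isomorphism $K$ of Theorem \ref{thm:isoK} to the universal recursion \eqref{RecursionL1}, using $K(y_1\ast W)=K(y_1)\cdot K(W)=y_1K(W)$ since $K$ fixes $M$ pointwise; this route makes the annotation $\stackrel{\eqref{RecursionL1}}{=}$ transparent.

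For \eqref{rec-K-univ} I would apply $K$ to the closed formula for $K^{-1}$ from Proposition \ref{Kinverse}. Because $K\circ K^{-1}=\mathrm{id}$ and $K$ is linear, this produces $y_1\cdots y_n=\sum_\pi K\bigl((y_1\cdots y_n)^\pi\bigr)$, the sum running over all set partitions $\pi$ of $\{1,\ldots,n\}$. It then remains to peel off the term indexed by the finest partition $\hat 0$ into $n$ singletons: there each block is a singleton, so $(y_1\cdots y_n)^{\hat 0}=y_1\cdots y_n$ and hence $K\bigl((y_1\cdots y_n)^{\hat 0}\bigr)=K(y_1\cdots y_n)$. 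Transposing this single term to the left-hand side and the remaining sum to the right-hand side gives exactly \eqref{rec-K-univ}.

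Both computations are immediate, so I do not anticipate a genuine obstacle; the only points requiring care are bookkeeping ones. In \eqref{rec-K-univ1} I must be sure that $\tau^\rhd_{y_1}$ distributes as a derivation over the \emph{concatenation} product, so that no $\rhd$-associators enter --- this is guaranteed by \eqref{Dtensor1} (or its iterate \eqref{Dtensor1-iterated}) rather than by \eqref{Dtensor2}. In \eqref{rec-K-univ} I must keep the convention straight: here $\hat 0$ denotes the \emph{finest} partition, and it is precisely its summand that reproduces the plain word $y_1\cdots y_n$, leaving $K(y_1\cdots y_n)$ to be isolated. Verifying these two identifications is the extent of the work.
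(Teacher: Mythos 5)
Your proof is correct and matches the paper's intended derivation: the paper states this as a corollary with no written proof, the annotation $\stackrel{\eqref{RecursionL1}}{=}$ signalling exactly your route via Theorem \ref{thm:isoK} (equivalently the direct use of \eqref{defK} with the derivation property from \eqref{Dtensor1}), and the second identity being precisely the rearrangement of Proposition \ref{Kinverse} after isolating the summand of the finest partition $\hat 0$. Both of your bookkeeping checks (no associators enter since $y_1\in M$, and $(y_1\cdots y_n)^{\hat 0}=y_1\cdots y_n$ under the ordering of blocks by maxima) are the right points to verify, so nothing is missing.
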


\noindent In other words, from Theorem \ref{thm:isoK} and Proposition \ref{Kinverse}, we have 
$$
	y_1 \ast \cdots \ast y_n
	=y_1\cdots y_n 
		+\sum_{\pi \smop{ set partition } \atop \smop{ of } \{1,\ldots, n\},\, \pi\neq\hat 0}(y_1\cdots y_n)^\pi,
$$
which is in line with Remark \ref{rmk:GLproductrecursion}.

Extending the derivation $\tau^{\rhd}: M \to \text{Der}(T(M))$ to an algebra homomorphism $\hat{\tau}^{\rhd}: T(M) \to \text{End}_\mathbf{k}(T(M))$, and using  \eqref{RecursionL1} and \eqref{product2a} (we replace here $\hat{L}^\rhd$ with $\hat{\tau}^{\rhd}$), together with \eqref{rec-K-univ1} we can deduce a particular formula for the GL-product which already appeared in Gavrilov \cite[p.~1003]{Gavrilov2008}
$$
	A*B 
	=A_{(1)}(A_{(2)} \rhd B)
	=A_{(1)}\hat{\tau}^\rhd_{K(A_{(2)})}B.
$$


\begin{remark}\rm
Let us consider the terms $b_n:=K^{-1}(y^{n})$ for some $y\in M$. We can see these expressions as elements of the free magmatic algebra $M_y$ generated by $y$. From \eqref{rec-Kinv} we have
\begin{equation}
	b_n=(L_y+\delta_y)b_{n-1}
\end{equation}
Hence the $b_n$'s are the noncommutative Bell polynomials (\cite{SchimRid1996}, see also \cite{MunLun2011, EFLM2014}). 
\end{remark}

\subsubsection{Gavrilov's $\alpha$ and $\lambda$ maps, and the logarithm of $K(\exp^{\textstyle\cdot}(ty))$}
\label{par:Z}

The recursive expression \eqref{rec-K-univ} does not deliver easily a closed formula. Gavrilov tackled the problem from another angle in \cite{Gavrilov2006}, by solving a first-order linear differential equation verified by the generating function $t \mapsto K(\exp^{\textstyle\cdot}(ty)) \in T(M_y)[[t]]$. Namely, following reference \cite[Lemma 2]{Gavrilov2006} we have
\begin{equation}
\label{eq:Ke}
	\frac d{dt} K(\exp^{\textstyle\cdot}(ty))=K(\exp^{\textstyle\cdot}(ty))\cdot \alpha(y,t).
\end{equation}
Here $\alpha= \alpha(y,t)\in M_y[[t]]$ solves the initial value problem \eqref{eq:phi} \cite[Lemma 1]{Gavrilov2006}. The first terms of the series $\alpha(y,t)$ are given by
\begin{align}
\label{eq:alpha1}
	\alpha(y,t)
	&=e^{-t\delta_y }y\\
	&=y - t  y \rhd y + \frac{t^2}{2} \big( (y \rhd y) \rhd y + y \rhd (y \rhd y) \big) \\
	& \quad 
		- \frac{t^3}{3!} \big( ((y \rhd y) \rhd y) \rhd y 
		+ (y \rhd (y \rhd y) ) \rhd y 
		+ 2 (y \rhd y) \rhd (y \rhd y) \nonumber \\
	&\qquad	
		+ y \rhd ((y \rhd y) \rhd y) 
		+ y \rhd (y \rhd (y \rhd y))\big) +\cdots  \nonumber \\
	&=\Forest{[]}-t\,\Forest{[[]]}+\frac{t^2}{2!}\left(\Forest{[[][]]}+\Forest{[[[]]]}\right)
	-\frac{t^3}{3!}\left(\Forest{[[][][]]}+\Forest{[[[]][]]}
	+2\,\Forest{[[][[]]]}+\Forest{[[[][]]]}+\Forest{[[[[]]]]}\right)+\cdots \label{eq:alpha2}
\end{align}
In the last equality we have represented the monomials in the free magmatic algebra $(M_y, \rhd)$ as planar binary trees, the magmatic product $\rhd$ being the right Butcher product $\diamond$ here. In view of \eqref{diamond-rhd}, the derivation $\delta_y$ is given by the left grafting $y\curvearrowright -$. We encounter a planar version of the so-called Connes--Moscovici coefficients in front of the trees, which can be interpreted as the number of possible levelings of the corresponding planar binary tree obtained by inverse (right) Knuth rotation \cite{EMY2021}. The ordinary (non-planar) Connes--Moscovici coefficient of a rooted tree is obtained by summing up the coefficients of its planar representatives. 
Introducing one more generator $z$, the element
\begin{equation}
\label{eq:lambda}
	\lambda(ty,z):=e^{-t\delta_y }z\in T(M_{y,z})[[t]]
\end{equation}
satisfies the differential equation \cite[Lemma 3]{Gavrilov2006}
\begin{equation}
	\frac{d}{dt}\lambda(ty,z)
	=-\delta_y (e^{-t\delta_y}z)
	=-e^{-t\delta_y }(y\rhd z)
	=-\alpha(y,t)\rhd \lambda(ty,z).
\end{equation}

\begin{remark}
\label{rmk:alpha-lambda}
\rm
Going back to \eqref{intriguing}, we deduce from \eqref{eq:alpha2} and \eqref{eq:lambda}, using Proposition~\ref{prop:pre-glp},
\begin{eqnarray}
	\alpha(y,t) 
	&=& e^{-t\delta_y } y = \exp^{\ast} \big(-\chi(ty)\big) \rhd y,\\
	\lambda(ty,z) 
	&=& e^{-t\delta_y }z = \exp^{\ast} \big(-\chi(ty)\big) \rhd z. \label{lambda}
\end{eqnarray}
\end{remark}

The solution of \eqref{eq:Ke} is given by the exponential of the Magnus expansion \cite{M54} (in its right-sided version):
\begin{equation}
\label{Z-magnus-eq}
	K(\exp^{\textstyle\cdot}(ty))
	=\exp^{\textstyle\cdot}\Big(\int_0^t\dot{\Omega}[\alpha(y,-)](s)\, ds\Big),
\end{equation}
where $\dot{\Omega}=\dot{\Omega}[A]$ is implicitly defined for any series $A=A[t]$ in the indeterminate $t$ by
\begin{equation}
\label{eq:omega}
	\frac{d}{dt}\Omega[A](t)
	=\frac{\ad_{\Omega[A]}}{1-e^{-\ad_{\Omega[A]}}} A(t)
	=\sum_{n=0}^\infty\frac{(-1)^n B_n}{n!}\ad^n_{\Omega[A]} A(t).
\end{equation}
As before, the $B_n$ are the modified Bernoulli numbers. Integration of Eq.~(\ref{eq:omega}), leads to an infinite series for $\Omega[A]$, the first terms of which are
\[
	\Omega[A](t)
	=\int^t_0A(s_1)ds_1+\frac{1}{2}\int^t_0\left[\int^{s_1}_0A(s_2)ds_2, A(s_1)\right]ds_1+\cdots.
\]
Indeed, we have
$$
	\dot{\Omega}[s A]=\sum_{r\ge 1}s^r\widetilde A_r,
$$
where $s$ is an indeterminate, with $\widetilde A_1=A$, $\widetilde A_2=\frac12 A\pl A$ and the recursive procedure
\begin{align*}
	\widetilde A_r
	&=\sum_{m\ge 0}\frac{(-1)^mB_m}{m!}\sum_{r_1+\cdots+r_m=r-1}
	\widetilde A_{r_1}\pl\Big(\widetilde A_{r_2}\pl \big(\cdots 
	\pl (\widetilde A_{r_m}\pl A)\cdots\big)\Big).
\end{align*}
The binary operation $\pl$ is the pre-Lie product defined by
\begin{equation*}
	(A\pl B)(t):=\left[\int_0^t A(s)\, ds,\, B(t)\right].
\end{equation*}
For $A(t):=\alpha(y,t)=\sum_{\ell\ge 0}\alpha_\ell(y) t^\ell$ we therefore get
\begin{equation}
\label{eq:Z}
	Z(ty):=\log^{\textstyle\cdot} K(\exp^{\textstyle\cdot}(ty))=\sum_{n\ge 1}Z_n(y)t^n,
\end{equation}
where the coefficients $Z_n(y)$ are recursively given by $Z_1(y)=y$ and
\begin{equation}
\label{rec-Z}
	(n+1)Z_{n+1}(y)
	=\sum_{m\ge 0}\frac{(-1)^mB_m}{m!}
	\sum_{a_1+\cdots+a_m+\ell=n,\atop a_j\ge 1,\ell\ge 0} 
	\ad_{Z_{a_1}(y)}\circ\cdots\circ \ad_{Z_{a_m}(y)}\big(\alpha_\ell(y)\big).
\end{equation}


\noindent It turns out that the series $Z=Z(ty)$ is closely related to the post-Lie Magnus expansion $\chi=\chi(ty)$. Indeed, the definition of $\chi$, recalled in Paragraph \ref{sec:plm}, and Theorem \ref{thm:isoK}, which states that the $K$-map is a unital algebra isomorphism from $\big(T(M),\ast\big)$ to $\big(T(M),\cdot\big)$, together with identity \eqref{exp=exp} yield
\begin{eqnarray}
	Z(ty)	&=&\mop{log}^{\textstyle\cdot} K(\exp^{\textstyle\cdot} (ty))\nonumber\\
		&=&\mop{log}^{\textstyle\cdot} K\Big(\exp^{*} \big(\chi(ty)\big)\Big)\nonumber\\
		&=&\mop{log}^{\textstyle\cdot} \exp^{\textstyle\cdot}\big(K(\chi(ty))\big)\nonumber\\
		&=&K\big(\chi(ty)\big). \label{ZKchi}
\end{eqnarray}
Safely dropping the indeterminate $t$, the map $Z:= K\circ\chi$ makes sense as a map from $\widehat{\mop{Lie}(M)}$ into itself, where $M$ is any graded magmatic algebra and where the hat stands for completion.\\

\noindent From \eqref{chi-order-three} we therefore get
\begin{equation}
	Z(ty)=ty-\frac {t^2}2 y\rhd y+\frac{t^3}{6}y \rhd (y \rhd y) 
		+ \frac{t^3}{6}(y \rhd y) \rhd y + \frac{t^3}{12} [y \rhd y, y] +\cdots
%
\end{equation}
Here we use $K( \llbracket y \rhd y, y\rrbracket)= [y \rhd y, y]$. The series $Z$ in turn gives rise to Gavrilov's $\beta$-map \cite[Lemma 6]{Gavrilov2006}. We will return to this point in Section \ref{sect:framed-pl} below.

\section{Framed Lie algebras and Gavrilov's $\beta$ map}\label{sect:framed-pl}

\noindent We now recall Gavrilov's notion of framed Lie algebra \cite{Gavrilov2006}.

\begin{definition}\cite{Gavrilov2006}
\label{def:freeframedLie}
A framed Lie algebra is a triple $(\mathfrak l,\rhd,\lbc .\,,.\rbc)$ where $\rhd$ is any bilinear product on the vector space $\mathfrak l$, and where $\lbc.\,,.\rbc$ is a Lie bracket on $\mathfrak l$.
\end{definition}

No compatibility relations of any sort are requested between the magmatic product $\rhd$ and the Lie bracket, which we denote in bold, $\lbc.\,,.\rbc$, to stress the distinction with the two Lie brackets, $[.\,,.]$ and $\llbracket .\,,.\rrbracket$, of a post-Lie algebra. An obvious example is given by the Lie algebra of vector fields, $\mathfrak l=\mathcal {XM}$, on a manifold $\mathcal M$ endowed with an affine connection $\nabla$. In this case, the magmatic product is given by $X\rhd Y:=\nabla_XY$, and the natural Lie bracket is the usual Jacobi bracket.\\

The free framed Lie algebra generated by a single element $y$ is denoted by $\mathcal L_y$, and its completion with respect to the natural grading is denoted by $\widehat {\mathcal L}_y$. One also denotes by $\mathcal U(\mathcal L_y)$ (resp.~$\widehat{\mathcal U}(\mathcal L_y)$) the enveloping algebra of $(\mathcal L_y,\lbc.\,,.\rbc)$ (resp.~its completion). The canonical projection
$$
	p:T(\mathcal L_y) \to \hskip -10pt\to \mathcal U(\mathcal L_y)
$$
readily extends to the completion. Its restriction to the free Lie algebra $\mop{Lie}(\mathcal L_y)$ generated by $\mathcal L_y$ is a Lie algebra morphism from $\big(\mop{Lie}(\mathcal L_y),[.\,,.]\big)$ onto $(\mathcal L_y,\lbc.\,,.\rbc)$. Gavrilov showed the following
\begin{lemma}
\label{Gavrilov-main-lemma}\cite[Lemma 6]{Gavrilov2006}
There exists a unique series $\beta=\beta(y)$ in the completion $\widehat{\mathcal L}_y$ such that
\begin{equation}
\label{pkbeta}
	p\circ K(\exp^{\textstyle\cdot}(y))=\exp(\beta(y)) \in \widehat{\mathcal U}(\mathcal L_y).
\end{equation}
The series $\beta(ty)$ verifies $\beta(0)=0$ and
\begin{equation}
\label{eq-beta}
	\frac d{dt}\beta(ty)=\frac{ \ad_{\beta(ty)}}{1-e^{-\ad_{\beta(ty)}}}\alpha(y,t).
\end{equation}
\end{lemma}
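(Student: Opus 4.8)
The plan is to construct $\beta$ explicitly from the post-Lie data already assembled and then verify the differential equation by transporting the Magnus recursion through the projection $p$. The starting point is identity \eqref{Z-magnus-eq}, which exhibits $K(\exp^{\textstyle\cdot}(ty))=\exp^{\textstyle\cdot}\big(Z(ty)\big)$ with $Z(ty)=\Omega[\alpha(y,-)](t)=K(\chi(ty))$ by \eqref{ZKchi}. First I would record that $Z(ty)$ is a Lie series, i.e.\ a primitive element of the completion of $T(\mathcal L_y)$ for the unshuffle coproduct. Indeed, the Magnus expansion \eqref{eq:omega} produces $Z(ty)$ as a completed sum of iterated brackets $\ad_{Z}^{\,n}\alpha_\ell$ of the coefficients $\alpha_\ell\in\mathcal L_y$ of $\alpha(y,t)$; each $\alpha_\ell$ is a degree-one tensor, hence primitive, and commutators of primitives are primitive, so $Z(ty)\in\widehat{\mop{Lie}(\mathcal L_y)}$.

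Next I would apply the projection $p:T(\mathcal L_y)\to\mathcal U(\mathcal L_y)$, extended to the completion. Since $p$ is an algebra morphism for concatenation sending it to the associative product of $\widehat{\mathcal U}(\mathcal L_y)$, it intertwines the two exponentials:
\[
	p\circ K(\exp^{\textstyle\cdot}(ty))=p\big(\exp^{\textstyle\cdot}(Z(ty))\big)=\exp\big(p(Z(ty))\big).
\]
Because the restriction of $p$ to $\mop{Lie}(\mathcal L_y)$ is a Lie algebra morphism onto $(\mathcal L_y,\lbc.\,,.\rbc)$ and $Z(ty)$ is a Lie series, the element $\beta(ty):=p\big(Z(ty)\big)=p\big(K(\chi(ty))\big)$ lies in $\widehat{\mathcal L}_y$ and satisfies $\beta(0)=0$; setting $t=1$ yields \eqref{pkbeta}, establishing existence. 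For uniqueness I would invoke that $\exp$ restricts to a bijection from the primitive elements $\widehat{\mathcal L}_y$ onto the group-like elements of $\widehat{\mathcal U}(\mathcal L_y)$ (PBW/Milnor--Moore, since $\mathcal L_y=\mop{Prim}\,\mathcal U(\mathcal L_y)$): any $\beta$ obeying \eqref{pkbeta} must be the primitive logarithm of the group-like element $p\circ K(\exp^{\textstyle\cdot}(y))$, hence is uniquely determined.

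Finally, for the differential equation \eqref{eq-beta} I would transport \eqref{eq:omega} through $p$. Differentiating $Z(ty)=\Omega[\alpha(y,-)](t)$ and using \eqref{eq:omega} gives $\dot Z(ty)=\frac{\ad_{Z(ty)}}{1-e^{-\ad_{Z(ty)}}}\,\alpha(y,t)$, where $\ad$ is taken with the commutator of concatenation, i.e.\ the free Lie bracket. Applying $p$, which commutes with $d/dt$, converts each $\ad_{Z(ty)}$ acting on Lie elements into $\ad_{\beta(ty)}$ for $\lbc.\,,.\rbc$ (as $p|_{\mop{Lie}(\mathcal L_y)}$ is a Lie morphism and $\beta=p\circ Z$). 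Since $\alpha(y,t)=\exp^{*}(-\chi(ty))\rhd y$ is by \eqref{intriguing} a purely magmatic, hence degree-one, element of $\mathcal L_y$, one has $p(\alpha(y,t))=\alpha(y,t)$, and therefore
\[
	\frac{d}{dt}\beta(ty)=\frac{\ad_{\beta(ty)}}{1-e^{-\ad_{\beta(ty)}}}\,\alpha(y,t).
\]
The hard part will be bookkeeping rather than conceptual: one must justify that $p$ intertwines the adjoint actions while fixing $\alpha(y,t)$, which hinges on recognizing $\alpha(y,t)$ as a degree-one element of $\mathcal L_y$ and on the primitiveness of $Z(ty)$ established in the first step. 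Once these two facts are in place, the exponential identity and the entire Magnus recursion descend verbatim along $p$.
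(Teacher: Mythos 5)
Your proposal is correct and takes essentially the same route as the paper, whose proof simply sets $\beta(ty):=p\big(Z(ty)\big)$ and appeals to the Magnus description of $Z$ from Paragraph \ref{par:Z}; your write-up supplies exactly the details left implicit there (primitivity of $Z(ty)$ via the Magnus series \eqref{Z-magnus-eq}, the descent of \eqref{eq:omega} through $p$, and the fact that $p$ fixes $\alpha(y,t)$ because it is a degree-one tensor). The only minor divergence is in the uniqueness step, where the paper invokes uniqueness of solutions to the initial value problem \eqref{eq-beta} while you use injectivity of $\exp$ on primitives of $\widehat{\mathcal U}(\mathcal L_y)$ --- both arguments are valid and equally short.
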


\begin{proof}
The series $\beta(ty)=p\big(Z(ty)\big)$, with $Z$ defined in Paragraph \ref{par:Z}, is a solution to the problem. Uniqueness, as solution to an initial value problem, follows.
\end{proof}

\noindent From \eqref{ZKchi}, the series $\beta(ty)$ is explicitly given by
\begin{equation}
\label{beta-chi}
	\beta(ty)=p\circ K\circ\chi(ty).
\end{equation}
Safely dropping the indeterminate $t$, the map 
\begin{equation}
\label{betapKchi}
	\beta:=p\circ K\circ\chi\restr{\widehat{\mathcal L_y}}
\end{equation}
is a linear endomorphism of the completed framed Lie algebra $\widehat {\mathcal L}_y$. We therefore have from \eqref{chi-order-three}:
\begin{equation}
\label{beta-order-three-no-t}
	\beta(y)=y - \frac {1}2 y\rhd y + \frac{1}{6}y \rhd (y \rhd y) 
			+ \frac{1}{6}(y \rhd y) \rhd y 
			+ \frac{1}{12} \lbc y \rhd y, y\rbc
			+\cdots
\end{equation}

\begin{remark}\rm
\label{beta-g}
Note that \eqref{betapKchi} defines Gavrilov's $\beta$-map in the completion of any graded framed Lie algebra. It is obviously bijective due to the fact that $\beta$ is equal to the identity modulo higher degree terms.
\end{remark}

\section{Affine connections on manifolds and covariant derivation}
\label{sec:covderiv}

\subsection{Reminders on connections, torsion, curvature and Bianchi identities}

Let $E$ be a vector bundle on a smooth manifold $\mathcal M$, and let $\Gamma(E)$ be the $C^\infty(\mathcal M)$-module of its smooth sections. An affine connection on $E$ is a bilinear map
\begin{eqnarray*}
	\nabla:\mathcal{XM}\times \Gamma(E) & \longrightarrow 	&\Gamma(E)\\
							   (X,s) & \longmapsto	& \nabla_X s,
\end{eqnarray*}
subject to the relations
\begin{eqnarray}
	\nabla_{fX}s&=& f\nabla_X s,\\
	\nabla_X(fs)&=&(X.f)s+f\nabla_Xs \label{Leibniz-connection}
\end{eqnarray}
for any $f\in C^\infty(\mathcal M)$ and any $s\in\Gamma(E)$. For clarity, we write $\nabla^E$ if the vector bundle has to be made precise and we use the convenient notation
$$
	X\rhd s:=\nabla_X s.
$$
This includes the case of the trivial line bundle $L$, where $\Gamma(L)$ then coincides with $C^\infty(\mathcal M)$, and the natural connection is given by $X\rhd f:=X.f=\langle Df,X\rangle$. The Leibniz rule \eqref{Leibniz-connection} can therefore be rewritten as follows:
$$
	X\rhd(fs)=(X\rhd f)s+f(X\rhd s).
$$
Given a connection on two vector bundles, $E$ and $F$, connections on $E \otimes F$ and $\mop{Lin}(E,F)=E^*\otimes F$ are naturally given respectively by the Leibniz rules

\begin{eqnarray}
	X\rhd(s\otimes s')
	&=&(X\rhd s)\otimes s'+s\otimes(X\rhd s'),\\
	(X\rhd \varphi)(s)
	&=&X\rhd\varphi(s)-\varphi(X\rhd s).
\end{eqnarray}
The curvature of an affine connection is given by
\begin{equation}
	r(X,Y)s:=X\rhd (Y\rhd s)-Y\rhd(X\rhd s)-\lbc X,Y \rbc \rhd s
\end{equation}
and is also denoted by $R(X,Y,s)$. It is skew symmetric in $(X,Y)$ and $C^\infty$-linear with respect to each of the three arguments. In the case $E=T\mathcal M$, the torsion is given by
\begin{equation}
	t(X,Y):=X\rhd Y-Y\rhd X -\lbc X,Y \rbc.
\end{equation}
This is skew-symmetric, and $C^\infty$-linear with respect to both arguments. The two Bianchi identities are given by
\begin{eqnarray}
\oint_{XYZ}R(X,Y,Z)
	&=&\oint_{XYZ}(X\rhd t)(Y,Z) -\oint_{XYZ}t\big(X,\,t(Y,Z)\big) \label{bianchi-one}\\
	\oint_{XYZ}(X\rhd R)(Y,Z,W)&=&\oint_{XYZ}R\big(X,\,t(Y,Z),W)\label{bianchi-two}
\end{eqnarray}
for any $X,Y,Z,W\in\mathcal{XM}$. Here the symbol $\oint$ stands for summing over circular permutations of the three arguments. For a detailed account, see e.g.~\cite{KN1963}.

\subsection{Higher-order covariant derivatives}
\label{tc-postLie}

We keep the notations from the previous paragraph and consider the post-Lie algebra associated to a manifold with connection. Let $\mathcal R:=C^\infty(\mathcal M)$. The $\mathcal R$-module $\mathcal{XM}=\mop{Der}(\mathcal R)$ is denoted by $\mathcal V$. We adopt the notations $X \rhd Y:= \nabla_X Y$, for $X,Y \in \mathcal V$, and $X \rhd f:=X.f$, for $f \in \mathcal R$. Let $\mathcal{DM}$ be the algebra of differential operators on $\mathcal M$, which is the subalgebra of linear operators on $C^\infty(\mathcal M)$ generated by the vector fields and the multiplication operators $\mu_f:g\mapsto fg$.\\

Let $A(T_m \mathcal M)$ and $\mop{Lie}(T_m \mathcal M)$ be the free unital associative algebra (i.e.~the tensor algebra) respectively the free Lie algebra both defined over the tangent space at any point $m \in \mathcal M$. Each of these free algebras put together form respectively the free unital algebra bundle $A_{\mathcal M}$ and the free Lie algebra bundle $\mop{Lie}_{\mathcal M}$. The free $\mathcal R$-associative unital algebra $\mathcal A=T_{\mathcal R}(\mathcal V)$ on $\mathcal V$ is the $\mathcal R$-module of sections of $A_{\mathcal M}$. We clearly have
\begin{equation}
	\mathcal A=T_{\mathbb R}(\mathcal V)/\mathcal C,
\end{equation}
where $\mathcal C$ is the two-sided ideal generated by the elements $fX\cdot Y - X \cdot fY$ with $f\in C^\infty(\mathcal M)$ and $X,Y \in \mathcal V$. We denote by $\pi$ the natural projection from $T_{\mathbb R}(\mathcal V)$ onto $\mathcal A$ (let us recall that $T_{\mathbb R}(\mathcal V)$ is the free $D$-algebra generated by the magmatic algebra $(\mathcal V,\rhd)$). Similarly, the $\mathcal R$-module $\mathfrak{g}$ of sections of $\mop{Lie}_{\mathcal M}$ is the free $\mathcal R$-Lie algebra $\mop{Lie}_{\mathcal R}(\mathcal V)$ on the vector fields and we have
\begin{equation}
	\mathfrak g=\widetilde{\mathfrak g}/(\mathcal C\cap\widetilde{\mathfrak g}),
\end{equation}
where $\widetilde{\mathfrak g}=\mop{Lie}_{\mathbb R}(\mathcal V)$ is the free post-Lie algebra generated by $(\mathcal V,\rhd)$. The tautological action of $\mathcal V$ by derivations on $\mathcal R$ is extended to $\widetilde{\mathfrak g}$ as follows:
$$
	[X,Y] \rhd  f:= X \rhd (Y \rhd f) 
						- (X \rhd Y) \rhd f 
						- Y \rhd (X \rhd f) 
						- (Y \rhd X) \rhd f,
$$ 
and similarly, for $X\in \mathcal V$ and $U\in T_{\mathbb R}(\mathcal V)$,
$$
	(X\cdot U)\rhd f:=X\rhd(U\rhd f)-(X\rhd U)\rhd f.
$$
These rules, which define the action recursively with respect to the degree, are adapted from the second post-Lie axiom and the second $D$-algebra axiom, respectively. 

\begin{theorem}\label{higher-cov}
The map $\rhd:T_{\mathbb R}(\mathcal V)\times \mathcal R\to \mathcal R$ defined above factorizes into a map
$$
	\rhd:\mathcal A\times \mathcal R\to \mathcal R.
$$
In other words, $\mop{Id}_{\mathcal V}$ extends to a surjective $\mathcal R$-linear morphism 
$$
	\rho: \mathcal A\to\hskip -10pt \to\mathcal{DM}.
$$
It restricts to
$$
	\rho: \mathfrak{g} \to\hskip -10pt \to \mathcal V.
$$
\end{theorem}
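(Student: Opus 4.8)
The plan is to realise the action as a single map $\rho\colon T_{\mathbb R}(\mathcal V)\to\mop{End}_{\mathbb R}(\mathcal R)$, $\rho(U)(g):=U\rhd g$, to check its image lies in $\mathcal{DM}$, then that it kills the ideal $\mathcal C$ so that it descends to $\mathcal A$, and finally to read off the two surjectivity statements. Write $\hat X\in\mathcal{DM}$ for the first-order operator $g\mapsto X.g$ attached to $X\in\mathcal V$, and $\mu_f$ for multiplication by $f\in\mathcal R$. The second $D$-algebra axiom \eqref{Dtensor2}, specialised to the action on $\mathcal R$ (and using $X\rhd h=\hat X(h)$ for $h\in\mathcal R$), gives the basic recursion
\begin{equation}\label{plan-star}
	\rho(X\cdot U)=\hat X\circ\rho(U)-\rho(\tau^{\rhd}_X U),\qquad X\in\mathcal V,\ U\in T_{\mathbb R}(\mathcal V).
\end{equation}
Since $\tau^{\rhd}_X$ is length-preserving (it is a derivation by \eqref{Dtensor1}), an induction on the length of $U$ shows that $\rho(U)$ is assembled from compositions of vector fields and lower covariant derivatives, so $\rho(U)\in\mathcal{DM}$, with $\rho(\mathbf 1)=\mop{Id}$ and $\rho|_{\mathcal V}=\mop{Id}_{\mathcal V}$.

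The heart of the argument is that $\mathcal C\subseteq\ker\rho$, and I would obtain it geometrically. By the recursion \eqref{plan-star} one identifies $\rho$ with the iterated covariant derivative of functions,
$$
	\rho(X_1\cdots X_n)(g)=\big(\nabla^n g\big)(X_1,\dots,X_n),
$$
which for $n=2$ reads $X_1.(X_2.g)-(\nabla_{X_1}X_2).g$. Now $\nabla^n g$ is a covariant $n$-tensor, hence $\mathcal R$-multilinear in $(X_1,\dots,X_n)$; in particular a factor $f\in\mathcal R$ may be shifted freely between two adjacent arguments, which is exactly the vanishing $\rho\big(A\cdot(fX\cdot Y-X\cdot fY)\cdot B\big)=0$. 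Thus $\rho$ annihilates every generator of $\mathcal C$ in every position, the action descends to $\mathcal A\times\mathcal R\to\mathcal R$, and the induced map $\mathcal A\to\mathcal{DM}$ is $\mathcal R$-linear because $\rho(f\cdot U)=\mu_f\circ\rho(U)$. Purely algebraically the same conclusion follows by induction on the length of $A$ from \eqref{plan-star}, the computation $\rho(fX\cdot Y)=\rho(X\cdot fY)=\mu_f\circ\big(\hat X\hat Y-\widehat{\nabla_X Y}\big)$ on a generator, and the stability $\tau^{\rhd}_Z\mathcal C\subseteq\mathcal C$ (verified by checking $Z\rhd(fX\cdot Y-X\cdot fY)$ is again a sum of generators, the coefficients being $Z.f$, $\nabla_Z X$, $\nabla_Z Y$).

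For surjectivity onto $\mathcal{DM}$, note that by \eqref{product2}--\eqref{product2a} a Grossman--Larson product of vector fields maps to a plain composition, $\rho(X_1\ast\cdots\ast X_k)=\hat X_1\circ\cdots\circ\hat X_k$, and by $\mathcal R$-linearity the induced map sends $g\cdot(X_1\ast\cdots\ast X_k)$ to $\mu_g\circ\hat X_1\circ\cdots\circ\hat X_k$. Using the commutation $\hat X\circ\mu_f=\mu_f\circ\hat X+\mu_{X.f}$, every differential operator can be rewritten with all multiplication operators on the left, i.e.\ as a sum of operators of the form $\mu_g\circ\hat X_1\circ\cdots\circ\hat X_k$; hence $\rho$ is onto $\mathcal{DM}$. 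For the restriction, I would establish the co-Leibniz identity $U\rhd(gh)=\sum(U_{(1)}\rhd g)(U_{(2)}\rhd h)$ for the action on $\mathcal R$ — the $\mathcal R$-valued analogue of Proposition \ref{magma-deshuffle}, proved by the same induction via \eqref{plan-star} — so that a primitive $U$ (an element of the free Lie algebra) acts on $\mathcal R$ as a derivation, that is, as a vector field. Therefore $\rho(\widetilde{\mathfrak g})\subseteq\mathcal V$, whence $\rho(\mathfrak g)\subseteq\mathcal V$, and surjectivity here is immediate since $\mathcal V$ sits in $\mathfrak g$ in degree one with $\rho|_{\mathcal V}=\mop{Id}$.

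The step I expect to be the genuine obstacle is the vanishing $\rho(\mathcal C)=0$ for the full \emph{two-sided} ideal, rather than on generators alone. The difficulty is structural: $\rho$ is multiplicative for the Grossman--Larson product but not for concatenation, whereas $\mathcal C$ is a concatenation ideal; and the derivations $\tau^{\rhd}_{fX}$ are only $\mathcal R$-linear modulo $\mathcal C$, so the direct manipulations are circular. The tensoriality of $\nabla^n$ is precisely what dissolves this circularity in one stroke, and the algebraic alternative needs the length induction together with the closed identity $\tau^{\rhd}_Z\mathcal C\subseteq\mathcal C$ to make the bookkeeping terminate.
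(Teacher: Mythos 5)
Your main route is correct, but it is organized quite differently from the paper's proof. The paper proves, in one self-contained induction on $n$ using \eqref{Dtensor1} and \eqref{Dtensor2}, the full multilinearity statement
$$
	(f_1x_1\cdots f_nx_n)\rhd g=f_1\cdots f_n\,\big((x_1\cdots x_n)\rhd g\big),
$$
which simultaneously kills the ideal $\mathcal C$ and yields the $\mathcal R$-linearity of $\rho$; the surjectivity and the restriction to $\mathfrak g$ are left implicit there. You instead identify the algebraic recursion with the classical iterated covariant derivative $\nabla^n g$ and cite the standard fact that it is a tensor. The identification is correct (the two recursions match, by induction on length), and the citation is legitimate; but be aware of what this buys: the classical tensoriality is established in the literature by exactly the induction the paper performs, so in this algebraic framework --- where the theorem \emph{is} the algebraic formulation of that tensoriality --- your argument relocates the computation rather than avoiding it. Conversely, your explicit treatment of the parts the paper glosses over is a genuine improvement: surjectivity onto $\mathcal{DM}$ via $\rho(x_1\ast\cdots\ast x_k)=\hat x_1\circ\cdots\circ\hat x_k$ and the normal form $\mu_g\circ\hat x_1\circ\cdots\circ\hat x_k$ of differential operators, and the inclusion $\rho(\mathfrak g)\subseteq\mathcal V$ via the co-Leibniz identity $U\rhd(gh)=(U_{(1)}\rhd g)(U_{(2)}\rhd h)$, the $\mathcal R$-valued analogue of Proposition \ref{magma-deshuffle}, which makes primitive elements act as derivations.

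One caveat: your ``purely algebraic alternative'' does not close as sketched. Induction on the length of $A$ in $A\cdot c\cdot B$, with the generator computation $\rho(fX\cdot Y)=\rho(X\cdot fY)$ as base and the stability $\tau^{\rhd}_Z\mathcal C\subseteq\mathcal C$ for the step, handles the inductive step but not the base case with nontrivial right factor: to get $\rho(c\cdot B)=0$ for $B\neq\mathbf 1$ you must compare $\rho(fX\cdot Y\cdot B)=\mu_f\hat X\circ\rho(Y\cdot B)-\rho\big(\tau^{\rhd}_{fX}(Y\cdot B)\big)$ with $\rho(X\cdot fY\cdot B)=\hat X\circ\rho(fY\cdot B)-\rho\big(\tau^{\rhd}_X(fY\cdot B)\big)$, and that comparison requires knowing $\rho(fY\cdot B)=\mu_f\circ\rho(Y\cdot B)$ and how to pull $f$ out from inside $\tau^{\rhd}_{fX}(Y\cdot B)$ --- that is, the $\mathcal R$-multilinearity of $\rho$ on shorter words. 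So the induction hypothesis has to be the multilinearity statement itself, which is precisely how the paper sets up its induction; your geometric route sidesteps this only because tensoriality of $\nabla^n g$ encodes that stronger statement.
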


\begin{proof}
It suffices to prove that
$$
	\rho(x_1\cdots x_n)=(x_1\cdots x_n)\rhd y
$$
is $C^\infty(\mathcal M)$-linear in each argument $x_i\in\mathcal{XM}$. This is obvious for $n=1$, and proven by induction for $n\ge 2$ using \eqref{Dtensor1} and \eqref{Dtensor2}:
\begin{align*}
	\lefteqn{(f_1x_1\cdots f_nx_n)\rhd y=f_1x_1\rhd\big((f_2x_2\cdots f_nx_n)\rhd y\big)
		- \big(f_1x_1\rhd(f_2x_2\cdots f_nx_n)\big)\rhd y} \hspace{1.5cm}\\
	&= f_1x_1\rhd\big(f_2\cdots f_n(x_2\cdots x_n)\rhd y\big)
		- \big(f_1x_1\rhd(f_2x_2\cdots f_nx_n)\big)\rhd y\\
	&= f_1\cdots f_nx_1\rhd\big(x_2\cdots x_n\rhd y\big)+\big(f_1x_1\rhd(f_2\cdots f_n)\big)(x_2\cdots x_n\rhd y)\\
	& \qquad -f_1\sum_{i=2}^n\Big(f_2x_2\cdots \big((x_1\rhd f_i)x_i+f_i(x_1\rhd x_i)\big)\cdots x_n\Big)\rhd y\\
	&= f_1\cdots f_n\Big(x_1\rhd\big((x_2\cdots x_n)\rhd y\big)-\big(x_1\rhd(x_2\cdots x_n)\big)\rhd y\Big).
\end{align*}
\end{proof}

\begin{remark}\label{hcd-extend}\rm
The previous formalism can be generalized to any vector bundle $E$ endowed with a connection $\nabla^E$, replacing $\mathcal{XM}$ by the $C^\infty(\mathcal M)$-module $\Gamma(E)$ of smooth sections of $E$. The higher-order covariant derivatives are also recursively defined by \eqref{Dtensor2}, and Theorem \ref{higher-cov} also holds for the action $\rhd:T_{\mathbb R}(V)\times \Gamma(E)\to\Gamma(E)$.
\end{remark}

We note that several notations are used in the literature for higher covariant derivatives, namely \cite{Gavrilov2008}
\begin{eqnarray*}
	\rho(x_1\cdots x_n)y
	&=&(x_1\cdots x_n)\rhd y\\
	&=&\nabla^n_{x_1\cdots x_n}y\\
	&=&(x_1\cdots x_n)\nabla^n y\\
	&=&\nabla^ny(x_n;\cdots;x_1).
\end{eqnarray*}

\begin{proposition}
\label{prop:g-post-lie}
$\mathfrak g$ (resp.~$\mathcal A$) is a post-Lie algebra (resp.~a $D$-algebra), and $\mathfrak g=\mathfrak d(\mathcal A)$.
\end{proposition}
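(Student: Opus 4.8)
The plan is to obtain $\mathcal A$ and $\mathfrak g$ as quotients of the free objects produced in Section \ref{sec:postLie}, precisely as $\mathcal U(\mathfrak g)$ was obtained from $T(\mathfrak g)$ in Proposition \ref{prop:quotient}. Recall that $T_{\mathbb R}(\mathcal V)$ is the free $D$-algebra on the magmatic algebra $(\mathcal V,\rhd)$ and that $\widetilde{\mathfrak g}=\mop{Lie}_{\mathbb R}(\mathcal V)=\mathfrak d\big(T_{\mathbb R}(\mathcal V)\big)$ is its post-Lie algebra of primitives, while $\mathcal A=T_{\mathbb R}(\mathcal V)/\mathcal C$ and $\mathfrak g=\widetilde{\mathfrak g}/(\mathcal C\cap\widetilde{\mathfrak g})$. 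Everything reduces to the single assertion that the two-sided associative ideal $\mathcal C$, generated by the elements $\mathfrak c_{f,X,Y}:=fX\cdot Y-X\cdot fY$, is also a two-sided ideal for $\rhd$. Granting this, the $D$-algebra structure of $T_{\mathbb R}(\mathcal V)$ (axioms \eqref{Dtensor1}, \eqref{Dtensor2}, associativity, and $\mathbf 1\rhd-=\mathrm{id}$) passes to $\mathcal A$, and since $\mathcal V\subseteq\mathfrak d(\mathcal A)$ generates $\mathcal A$ the triple $(\mathcal A,\cdot,\rhd)$ is a $D$-algebra; Lemma \ref{lem:DpostLie} then endows $\mathfrak d(\mathcal A)$ with a post-Lie structure. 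The geometric input is exactly the two defining properties of the connection: $\mathcal R$-linearity in the direction, $\nabla_{fX}=f\nabla_X$, and the Leibniz rule \eqref{Leibniz-connection}, $\nabla_X(fY)=(X.f)Y+f\nabla_XY$.

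For the left inclusion $T_{\mathbb R}(\mathcal V)\rhd\mathcal C\subseteq\mathcal C$ I would compute, for $W\in\mathcal V$ and a generator, using that $W\rhd-$ is a derivation for $\cdot$ by \eqref{Dtensor1} together with the two connection properties,
\begin{equation*}
	W\rhd\mathfrak c_{f,X,Y}=\mathfrak c_{W.f,\,X,\,Y}+\mathfrak c_{f,\,W\rhd X,\,Y}+\mathfrak c_{f,\,X,\,W\rhd Y}\in\mathcal C .
\end{equation*}
Because $\mathcal C$ is already a two-sided ideal for $\cdot$ and each $W\in\mathcal V$ acts by a derivation, this gives $W\rhd(A\cdot\mathfrak c\cdot B)\in\mathcal C$; the extension to arbitrary $U\in T_{\mathbb R}(\mathcal V)$ is a short induction on the length of $U$ using the associator rule \eqref{Dtensor2}, in the form $(xV)\rhd c=x\rhd(V\rhd c)-(x\rhd V)\rhd c$. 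This is the statement that the connection induces a well-defined connection $\nabla^{A_{\mathcal M}}$ on the tensor-algebra bundle $A_{\mathcal M}$ whose module of sections is $\mathcal A$.

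The right inclusion $\mathcal C\rhd T_{\mathbb R}(\mathcal V)\subseteq\mathcal C$ is where I would exploit the higher-covariant-derivative picture rather than grind out a non-primitive analogue of the induction in Proposition \ref{prop:quotient}. The internal product $u\rhd U$, read with its second argument in $\mathcal A=\Gamma(A_{\mathcal M})$, coincides with the action of the higher covariant derivatives of $\nabla^{A_{\mathcal M}}$: both agree on $\mathcal V$ and obey the same recursions \eqref{Dtensor1}--\eqref{Dtensor2}, so $\pi(u\rhd U)=u\rhd^{A_{\mathcal M}}\pi(U)$, where $\pi:T_{\mathbb R}(\mathcal V)\to\mathcal A$ is the projection. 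By Theorem \ref{higher-cov} together with Remark \ref{hcd-extend} applied to $E=A_{\mathcal M}$, this action is $C^\infty$-linear in the first argument, i.e. $\mathcal C$ acts as zero; hence $\pi(c\rhd U)=c\rhd^{A_{\mathcal M}}\pi(U)=0$ and $c\rhd U\in\mathcal C$ for every $c\in\mathcal C$. Thus $\mathcal A$ is a $D$-algebra and $\mathfrak d(\mathcal A)$ is post-Lie.

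It remains to prove $\mathfrak g=\mathfrak d(\mathcal A)$. One inclusion is immediate: every $x\in\widetilde{\mathfrak g}=\mathfrak d\big(T_{\mathbb R}(\mathcal V)\big)$ acts by a $\cdot$-derivation, which descends to $\mathcal A$, so $\mathfrak g=\pi(\widetilde{\mathfrak g})\subseteq\mathfrak d(\mathcal A)$. For the reverse inclusion I would first observe that the unshuffle coproduct descends to an $\mathcal R$-linear map $\Delta_\shuffle:\mathcal A\to\mathcal A\otimes_{\mathcal R}\mathcal A$ — here the non-primitive part of $\Delta_\shuffle(\mathfrak c_{f,X,Y})$, namely $fX\otimes Y+Y\otimes fX-X\otimes fY-fY\otimes X$, vanishes over $\otimes_{\mathcal R}$, so the generators of $\mathcal C$ become primitive — making $(\mathcal A,\cdot,\Delta_\shuffle)$ a connected graded cocommutative $\mathcal R$-Hopf algebra. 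The argument of Theorem \ref{thm:Dtensor} (via \eqref{Dtensor1-iterated}) then identifies $\mathfrak d(\mathcal A)$ with $\mop{Prim}_{\mathcal R}(\mathcal A)$, and the Friedrichs criterion over $\mathcal R$ gives $\mop{Prim}_{\mathcal R}(\mathcal A)=\mop{Lie}_{\mathcal R}(\mathcal V)=\mathfrak g$. I expect this last identification to be the main obstacle: passing from a field to the ring $\mathcal R=C^\infty(\mathcal M)$ in the structure theory of cocommutative Hopf algebras requires knowing that $\mathcal V=\mathcal{XM}$ is a finitely generated projective $\mathcal R$-module (Serre--Swan), which is the one place where one must argue beyond a direct transcription of the free-algebra formalism.
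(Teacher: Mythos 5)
Your handling of the quotient step is correct, and for the harder half of it you take a genuinely different route from the paper. The paper works inside the free post-Lie algebra $\widetilde{\mathfrak g}$ with the Lie ideal $\mathcal C'=\mathcal C\cap\widetilde{\mathfrak g}$ generated by the elements $[fA,B]-[A,fB]$: it proves $U\rhd\mathcal C'\subseteq\mathcal C'$ by a Leibniz computation (the bracket analogue of your correct identity $W\rhd(fX\cdot Y-X\cdot fY)\in\mathcal C$), and then shows by a second direct computation that the generators of $\mathcal C'$ act as \emph{zero}, $([fA,B]-[A,fB])\rhd U=0$; the $D$-algebra statement for $\mathcal A$ is declared ``similar'' and the equality $\mathfrak g=\mathfrak d(\mathcal A)$ is declared ``clear''. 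You instead work with the full associative ideal $\mathcal C\subseteq T_{\mathbb R}(\mathcal V)$, do the post-Lie side last, and obtain the right-ideal property $\mathcal C\rhd T_{\mathbb R}(\mathcal V)\subseteq\mathcal C$ not by computation but by identifying $\pi(u\rhd U)$ with the higher covariant derivative $u\rhd^{A_{\mathcal M}}\pi(U)$ and invoking Theorem \ref{higher-cov} and Remark \ref{hcd-extend} for $E=A_{\mathcal M}$. This is legitimate and economical: your left inclusion is exactly what makes the induced connection on $A_{\mathcal M}$ well defined, so there is no circularity, the identification follows by induction since both actions agree on $\mathcal V$ and obey the recursion \eqref{Dtensor2}, and tensoriality then says $\mathcal C=\mop{Ker}\pi$ acts as zero. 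The paper's hand computation yields the slightly finer statement that the generators annihilate everything on the nose, but both arguments suffice for the quotient structures.

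The genuine gap is in the last assertion, specifically the inclusion $\mathfrak d(\mathcal A)\subseteq\mathfrak g$. The step ``$L^\rhd_u$ a derivation $\Rightarrow$ $u$ primitive'', which you import from the argument of Theorem \ref{thm:Dtensor}, requires the map $u\mapsto L^\rhd_u$ to be faithful enough to detect non-primitive components, and in the geometric setting it is not: the paper itself (Subsection \ref{ssec:JandKideals}) exhibits a nonzero ideal $\mathcal K=\mop{Ker}\,\Phi\subseteq\mathfrak g$ of elements $u$ with $u\rhd a=0$ for all $a\in\mathcal V$; such a $u$ is a derivation killing the generators of $\mathcal A$, hence $L^\rhd_u=0$ on all of $\mathcal A$. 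For $0\neq u\in\mathcal K$, lifting to $\widetilde{\mathfrak g}$ and using \eqref{Dtensor2} gives $(u\cdot u)\rhd W=u\rhd(u\rhd W)-(u\rhd u)\rhd W=0$ for every $W$, so $u\cdot u\in\mathfrak d(\mathcal A)$; yet $u\cdot u$ is not primitive, since the reduced part of $\Delta_\shuffle(u\cdot u)$ equals $2\,u\otimes u$, which is nonzero (evaluate at a point of $\mathcal M$ where the section $u$ does not vanish). So derivation elements and primitive elements genuinely part ways in the presence of curvature, your claimed identity $\mathfrak d(\mathcal A)=\mop{Prim}_{\mathcal R}(\mathcal A)$ cannot be established by this route, and indeed this example shows the assertion $\mathfrak g=\mathfrak d(\mathcal A)$ is far from ``clear'' as stated: it seems to need either a primitivity or grading restriction in the definition of $\mathfrak d$, or an argument the paper does not supply. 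Two further remarks. First, your Serre--Swan worry is misplaced: Friedrichs' criterion $\mop{Prim}_{\mathcal R}\big(T_{\mathcal R}(\mathcal V)\big)=\mop{Lie}_{\mathcal R}(\mathcal V)$ holds for any module over a commutative $\mathbb Q$-algebra via the Eulerian idempotent, whose image consists of Lie elements by a universal identity with rational coefficients; the faithfulness failure above is the real obstruction. Second, as written your proof that $\mathfrak g$ is post-Lie is routed through the problematic equality; decouple it by noting that once $\mathcal C$ is a two-sided $\rhd$-ideal, $\mathcal C\cap\widetilde{\mathfrak g}$ is an ideal of $\widetilde{\mathfrak g}$ for both operations, so the post-Lie structure descends directly to $\mathfrak g=\pi(\widetilde{\mathfrak g})$ --- this is the paper's argument and is independent of the last assertion.
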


\begin{proof}
It is sufficient to show that the Lie ideal $\mathcal C':=\mathcal C\cap \widetilde{\mathfrak g}$, generated by the elements $[fA,B]-[A,fB]$, $f \in \mathcal R$, $A,B \in \widetilde{\mathfrak g}$, is also an ideal for the product $\rhd$ extended to $\widetilde{\mathfrak g}$. Let $U,A,B\in\mathfrak g$ and $f\in \mathcal R$. From the straightforward computation
\begin{eqnarray*}
	U\rhd([fA,B]-[A,fB])
	&=&[(U\rhd f)A,B]+[f(U\rhd A),B]+[fA,U\rhd B]\\
	&  &-[U\rhd A,fB]-[A,(U\rhd f)B]-[A,f(U\rhd B)]
\end{eqnarray*}
we get $U\rhd\mathcal C'\subseteq\mathcal C'$ for any $U\in\mathfrak g$. From
\begin{align*}
	\lefteqn{([fA,B]-[A,fB])\rhd U=fA\rhd(B\rhd U)-(fA\rhd B)\rhd U - B\rhd(fA\rhd U)}\\
	& \quad  +(B\rhd fA)\rhd U - A\rhd(fB\rhd U)+(A\rhd fB)\rhd U\\
	& \quad +fB\rhd(A\rhd U) -(fB\rhd A)\rhd U\\
	&=-(A\rhd f)B\rhd U+(A\rhd f)B\rhd U-(B\rhd f)A\rhd U+(B\rhd f)A\rhd U\\
	&=0
\end{align*}
we get $\mathcal C'\rhd U\subseteq\mathcal C'$ for any $U\in\mathfrak g$. The proof of the fact that $\mathcal A$ is a $D$-algebra is similar, and the last assertion is clear.
\end{proof}

\noindent Let us recall two more Lie brackets at hand:
\begin{itemize}
\item
The usual Jacobi bracket $\lbc .\,,. \rbc$ on $ \mathcal V$, defined by 
$$
	\lbc X,Y \rbc\  \rhd f 
	= X \rhd (Y \rhd f) - Y \rhd (X \rhd f)
$$
for any $X,Y\in\mathfrak g$ and any $f \in \mathcal R$,
\item
The Grossman--Larson bracket $\llbracket  .\,,.  \rrbracket$ on $\mathfrak{g}$, which satisfies for any $Z \in \mathfrak{g}$ 
$$
	\llbracket X,Y \rrbracket \rhd Z 
	= X \rhd (Y \rhd Z) - Y \rhd (X \rhd Z).
$$
\end{itemize}
From the post-Lie algebra identity $\llbracket X,Y \rrbracket = [ X,Y ] + X \rhd Y - Y \rhd X$, for $X,Y \in \mathfrak{g}$ we get
\begin{equation}
\label{rhoLie}
	\rho\llbracket X,Y \rrbracket 
	= \rho(X) \circ \rho(Y) - \rho(Y) \circ \rho(X)
	= \lbc\rho(X),\rho(Y)\rbc. 
\end{equation}
Hence, $(\mop{Lie}_{\mathcal M},\,\llbracket.\,,.\rrbracket)$ is a Lie algebroid on $\mathcal M$ with anchor map $\rho$. We refer the reader to \cite{munthe2020invariant} which discusses in detail the fact that Post-Lie algebroids are action algebroids.\\

The canonical projection $p:T_{\mathbb R}(\mathcal V) \to \hskip -10pt\to \mathcal U( \mathcal V)$ restricts to $p:\widetilde{\mathfrak g}\to\hskip -10pt \to \mathcal V$. The two following diagrams commute (see also \cite[Lemma 2]{Gavrilov2008}): 
$$
\xymatrix{(T_{\mathbb R}(\mathcal V),*)\ar[rr]^K_\sim\ar@{>>}[d]^\pi &&(T_{\mathbb R}(\mathcal V),\cdot)\ar@{>>}[d]^p\\
(\mathcal A,*)\ar@{>>}[drr]_\rho &&\mathcal U(\mathcal V)\ar@{>>}[d]^{\widetilde\pi}\\
&& \mathcal{DM}
}
\hskip 12mm
\xymatrix{(\widetilde{\mathfrak g},\llbracket .\,,.\rrbracket)\ar[rr]^K_\sim\ar@{>>}[d]^\pi &&(\widetilde{\mathfrak g},[.\,,.])\ar@{>>}[dd]^p\\
(\mathfrak g,\llbracket .\,,.\rrbracket)\ar@{>>}[drr]_\rho &&\\
&& \mathcal V
}
$$
Here the map $\widetilde\pi$ stands for the natural projector from the universal enveloping algebra $\mathcal U(\mathcal V,\lbc.\,,.\rbc)$ onto $\mathcal{DM}$, and, in view of Theorem \ref{thm:isoK}, all arrows are algebra (resp.~Lie algebra) morphisms. 

\begin{remark}
\label{alt-beta}\rm
Gavrilov's $\beta$-map is a bijection from $\overline{\mathcal V}$ into itself, where $\overline{\mathcal V}$ is the complete filtered framed Lie algebra obtained from $\mathcal V=\mathcal{XM}$ by extending the scalars from real numbers $\mathbb R$ to power series without constant terms, in one or several indeterminates, e.g.~$\overline{\mathcal V}=t\mathcal V[[t]]$ or $\overline{\mathcal V}=t\mathcal V[[t,s]]+s\mathcal V[[t,s]]$. On the other hand, both graded post-Lie algebras, $\widetilde{\mathfrak g}$ and $\mathfrak g$, come with their own post-Lie Magnus expansions, $\widetilde\chi$ respectively $\chi$. Using the natural extensions of $\widetilde\chi$, $\chi$ and the projection $\pi$ to the associated completed versions of $\widetilde{\mathfrak g}$ and $\mathfrak g$, we have
\begin{equation} 
\pi\circ\widetilde\chi=\chi\circ\pi.
\end{equation}
In view of \eqref{beta-chi} we therefore have
\begin{equation}
\label{beta-chi-two}
	\beta = p\circ K\circ \widetilde\chi\restr{\overline{\mathcal V}}
		= \rho\circ\pi\circ\widetilde\chi\restr{\overline{\mathcal V}}
		= \rho\circ\chi\circ\pi\restr{\overline{\mathcal V}},
\end{equation}
hence
\begin{equation}
\label{betachirho}
	\beta=\rho\circ\chi\restr{\overline{\mathcal V}}\ .
\end{equation}
The situation can be summarized by the following diagram:
$$
\xymatrix{
\overline{\widetilde{\mathfrak g}}\ar[rr]^{\widetilde\chi}_\sim\ar@{>>}[d]^\pi &&(\overline{\widetilde{\mathfrak g}},\llbracket .\,,.\rrbracket)\ar[rr]^K_\sim\ar@{>>}[d]^\pi &&(\overline{\widetilde{\mathfrak g}},[.\,,.])\ar@{>>}[dd]^p\\
\overline{\mathfrak g}\ar[rr]^\chi_\sim &&(\overline{\mathfrak g},\llbracket .\,,.\rrbracket)\ar@{>>}[drr]_\rho &&\\
\overline{\mathcal V}\ar@{^{(}->}[u]\ar[rrrr]^\beta_\sim &&&& \overline{\mathcal V}
}
$$
\end{remark}

\subsection{Differential operators}
\label{par:diff}

It is easily seen that we can identify the algebra $\mathcal{DM}$ of differential operators with $\mathcal U(\mathcal V)/\mathcal I$, where $\mathcal I$ is the two-sided ideal generated by the elements $X(fv)-(fX)v-(X\rhd f)v$ for $f\in C^\infty(\mathcal M)$, $X\in \mathcal V$ and $v\in \mathcal U(\mathcal V)$.\\

Now we develop Example \ref{example2} outlined above. Supposing that the connection is flat with constant torsion, the $D$-algebra structure on $\mathcal{DM}$ will immediately arise in view of the following result

\begin{lemma}
\label{lem:dalgExample2}
When the affine connection $\nabla$ on $\mathcal M$ is flat with constant torsion, $(\mathcal V,\rhd,\lbc.\,,.\rbc)$ is a post-Lie algebra, and both projections
$T(\mathcal {XM})\mopl{$\longrightarrow\hskip -6mm\longrightarrow$}^{p} \mathcal U(\mathcal {XM})\mopl{$\longrightarrow\hskip -6mm\longrightarrow$}^{\widetilde\pi}\mathcal {DM}$
are $D$-algebra morphisms.
\end{lemma}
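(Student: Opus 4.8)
The statement has two independent parts: the post-Lie structure carried by $\mathcal V$ itself, and the fact that the two projections respect both $D$-algebra operations. For the first part I would simply specialise Example~\ref{exm:connection}. Taking $\rhd=\nabla$ and $-t(\cdot,\cdot)$ for the Lie bracket, the second post-Lie axiom \eqref{eq2} is precisely the vanishing of the curvature $r$, while the first axiom \eqref{eq1} is precisely the identity $(\nabla_X t)(Y,Z)=0$ encoding constant torsion; the skew-symmetry of $t$ and the first Bianchi identity \eqref{bianchi-one} (which under $r=0=\nabla t$ reduces to the Jacobi identity for $t$) turn $-t$ into a Lie bracket. Thus $(\mathcal V,\rhd,-t)$ is a post-Lie algebra, and combining \eqref{twoLie} with \eqref{torsion} shows that its double bracket $\llbracket\cdot,\cdot\rrbracket$ equals the Jacobi bracket $\lbc\cdot,\cdot\rbc$, which is the sense in which the framed Lie algebra $(\mathcal V,\rhd,\lbc\cdot,\cdot\rbc)$ is post-Lie.

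For the projection $p$ the work is already done by Proposition~\ref{prop:quotient}. Indeed $T(\mathcal{XM})=T_{\mathbb R}(\mathcal V)$ is the free $D$-algebra over the magmatic algebra $(\mathcal V,\rhd)$ (Theorem~\ref{thm:Dtensor}, Remark~\ref{rmk:freeD}), and $\mathcal U(\mathcal{XM})$ is the enveloping algebra of the post-Lie algebra above, realised as the quotient of $T(\mathcal{XM})$ by the two-sided ideal $J$ generated by the elements $\jmath_{X,Y}=XY-YX+t(X,Y)$. Proposition~\ref{prop:quotient} shows $J$ to be an ideal simultaneously for the associative product and for $\rhd$, so the quotient inherits a $D$-algebra structure and the canonical surjection $p$ is a $D$-algebra morphism.

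The substance of the lemma lies in the projection $\widetilde\pi$. Writing $\mathcal{DM}=\mathcal U(\mathcal V)/\mathcal I$, where $\mathcal I$ is the two-sided ideal generated by $\eta_{X,f,v}:=X(fv)-(fX)v-(X\rhd f)v$, the plan is to prove that $\mathcal I$ is moreover a two-sided ideal for $\rhd$. Granting this, the $D$-algebra structure of $\mathcal U(\mathcal V)$ descends to the quotient, $\widetilde\pi$ becomes the associated $D$-algebra morphism, and the $D$-algebra structure on $\mathcal{DM}$ announced in Example~\ref{example2} is obtained as a by-product. To establish $U\rhd\mathcal I\subseteq\mathcal I$ I would distribute $U\rhd-$ over the associative products occurring in $\eta_{X,f,v}$ by means of \eqref{Dtensor1-iterated}, feeding in the geometric Leibniz rule $X\rhd(fs)=(X\rhd f)s+f(X\rhd s)$ and the tensoriality $\nabla_{fX}=f\nabla_X$, so that each generator is carried into a combination of generators of the same shape. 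To establish $\mathcal I\rhd U\subseteq\mathcal I$ I would argue as in the proof of Proposition~\ref{prop:quotient}: Proposition~\ref{prop:pre-glp}, that is relation \eqref{Dtensor2-iter}, lets one strip off left factors and reduce, by induction on word-length, to the generators $\eta_{X,f,v}$ alone.

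The main obstacle is precisely this final reduction. The generators $\jmath_{X,Y}$ of Proposition~\ref{prop:quotient} are primitive and satisfy $\jmath_{X,Y}\rhd U=0$ identically, which trivialises that case; by contrast the $\eta_{X,f,v}$ interlace the associative product with multiplication by functions, so $\eta_{X,f,v}\rhd U$ need not vanish and must instead be recognised as an element of $\mathcal I$. This is the step that genuinely uses the interaction of the connection with the $C^\infty(\mathcal M)$-module structure, and it proceeds in close parallel with the treatment of the $\mathcal R$-linearity ideal $\mathcal C'$ in Proposition~\ref{prop:g-post-lie}. Once it is settled, the surjectivity of $p$ and of $\widetilde\pi$ renders both morphism assertions immediate.
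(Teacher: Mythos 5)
Your proposal follows essentially the same route as the paper: the paper's own (two-line) proof likewise reduces everything to showing that the ideal $\mathcal I$ with $\mathcal{DM}=\mathcal U(\mathcal V)/\mathcal I$ is a two-sided ideal for $\rhd$, treating the post-Lie structure and the statement for $p$ as consequences of Example \ref{exm:connection} and Proposition \ref{prop:quotient}, and deferring the remaining verification to the reader as an argument ``similar to the one of Proposition \ref{prop:quotient}'' using the cocommutativity of the coproduct --- precisely the plan you describe via \eqref{Dtensor1-iterated} and \eqref{Dtensor2-iter}. Your added observations (identifying the double bracket of $(\mathcal V,-t,\rhd)$ with the Jacobi bracket as the sense of the first assertion, and flagging the non-primitivity of the generators $\eta_{X,f,v}$ as the genuine extra difficulty relative to the generators $\jmath_{X,Y}$) are correct and, if anything, more explicit than the paper's proof.
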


\begin{proof}
It suffices to show that the ideal $\mathcal I$ is also a two-sided ideal with respect to the magmatic product $\rhd$. The proof, left to the reader, uses the cocommutativity of the coproduct on $\mathcal U(\mathcal{XM})$, and proceeds similarly to the one of Proposition \ref{prop:quotient}. 
\end{proof}

\section{Special polynomials}
\label{sect:special}

We now study Gavrilov's special polynomials \cite{Gavrilov2006} from the post-Lie viewpoint. 

\subsection{Torsion and curvature revisited in the post-Lie framework}
\label{tc-postLie-bis}

We use the notations introduced at the beginning of Section \ref{sec:covderiv}.

\begin{definition}
\label{torsion-curvature-elts}
The torsion of two elements $a,b \in \mathcal V$ is defined by
$$
	t(a.b)
	:=a \rhd b - b \rhd a - \lbc a,b\rbc. 
$$
The curvature of three elements $a,b,c\in \mathcal V$ is defined by
$$
	r(a.b)(c)
	:= a \rhd (b \rhd c)  
		- b \rhd (a \rhd c) 
			- \lbc a,b\rbc \rhd c. 
$$
\end{definition}

This is sometimes denoted $R(a.b.c)$. One can show that $t \in {\mop{Lin}}_\mathcal R(\mathcal V \otimes_{\mathcal R} \mathcal V,\mathcal V)$ (a tensor of type (2,1)) and that $R \in {\mop{Lin}}_{\mathcal R}(\mathcal V \otimes_{\mathcal R} \mathcal V \otimes_{\mathcal R} \mathcal V,\mathcal V)$  (a tensor of type (3,1)). We can rewrite the curvature in our post-Lie framework as follows:
\begin{equation}
\label{rs1}
	r(a.b)(c) = s(a.b) \rhd c.
\end{equation}
On the righthand side of \eqref{rs1}, we identify a new element.

\begin{definition}
\label{def:curvature element}
The \textsl{curvature element} $s(a.b) \in \mathfrak g$ is defined by
\begin{equation}
\label{rs2}
	s(a.b)
	:= \llbracket a,b \rrbracket - \lbc a,b\rbc 
	= [a,b] +  t(a.b).
\end{equation}
\end{definition}

In turn, $s(a.b)$ permits to express the torsion in terms of the three Lie brackets involved:
\begin{equation}
\label{rs3}
	t(a.b)= \llbracket a,b \rrbracket -[a,b]-\lbc a,b\rbc.
\end{equation}

\subsection{The ideals $\mathcal J$ and $\mathcal K$}
\label{ssec:JandKideals}

\begin{proposition}
\label{prop:kernel}
Let $\mathcal J:=\mop{Ker}\rho \subset \mathfrak{g}$, and let $\tilde{\mathcal J}$ be the ideal of $\mathfrak{g}$, for the Grossman--Larson bracket (GL-bracket), generated by the curvature elements $s(a.b)$, $a,b \in \mathcal V$. Then
$$
	\tilde{\mathcal J} = \mathcal J.
$$
We also have the decomposition 
\begin{equation}
\label{gVJ}
	\mathfrak{g} = \mathcal V \oplus \mathcal J.
\end{equation}
\end{proposition}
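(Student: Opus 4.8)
The plan is to prove the two assertions in tandem, since the decomposition \eqref{gVJ} will feed back into the proof that $\tilde{\mathcal J}=\mathcal J$. First I would establish the inclusion $\tilde{\mathcal J}\subseteq\mathcal J$, which is the easy direction. By \eqref{rhoLie} the anchor $\rho$ intertwines the Grossman--Larson bracket $\llbracket.\,,.\rrbracket$ on $\mathfrak g$ with the commutator of differential operators on $\mathcal{DM}$; in particular $\mathcal J=\mop{Ker}\rho$ is a GL-ideal, being the kernel of a Lie algebra morphism. So it suffices to check that each generator $s(a.b)$ lies in $\mathcal J$. From \eqref{rs2} and \eqref{rhoLie} we compute $\rho\big(s(a.b)\big)=\rho\llbracket a,b\rrbracket-\rho\lbc a,b\rbc=\lbc\rho(a),\rho(b)\rbc-\lbc a,b\rbc$, and since $\rho$ is the identity on $\mathcal V$ and $\lbc.\,,.\rbc$ is the Jacobi bracket realized by the commutator of the first-order operators $\rho(a),\rho(b)$, this vanishes. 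Hence every generator is in $\mathcal J$, and because $\mathcal J$ is a GL-ideal, the whole ideal $\tilde{\mathcal J}$ it generates is contained in $\mathcal J$.

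Next I would prove the direct sum decomposition \eqref{gVJ}. The sum being direct amounts to $\mathcal V\cap\mathcal J=\{0\}$: if $X\in\mathcal V$ satisfies $\rho(X)=0$ then $X=\mop{Id}_{\mathcal V}(X)=0$, since $\rho$ restricts to the identity on $\mathcal V\subset\mathfrak g$ (this is exactly the content of the restricted surjection $\rho:\mathfrak g\twoheadrightarrow\mathcal V$ in Theorem \ref{higher-cov}, which splits the inclusion $\mathcal V\hookrightarrow\mathfrak g$). For the spanning property, I would argue that $\mathfrak g=\mathcal V+\mathcal J$ using the grading of the free $\mathcal R$-Lie algebra $\mathfrak g=\mop{Lie}_{\mathcal R}(\mathcal V)$ by bracket-length. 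In degree one $\mathfrak g$ coincides with $\mathcal V$. In higher degree, any iterated bracket can be rewritten modulo $\mathcal J$ by repeatedly replacing the Jacobi bracket $\lbc a,b\rbc$ with $\llbracket a,b\rrbracket-s(a.b)$ via \eqref{rs2}: since $s(a.b)\in\mathcal J$ and $\rho\llbracket a,b\rrbracket=\lbc\rho(a),\rho(b)\rbc$ lands in $\mathcal V$, every bracket monomial is congruent modulo $\mathcal J$ to an element of $\mathcal V$. Equivalently, $X-\iota\big(\rho(X)\big)\in\mathcal J$ for every $X\in\mathfrak g$, where $\iota:\mathcal V\hookrightarrow\mathfrak g$ is the canonical inclusion; this gives both $\mathfrak g=\mathcal V+\mathcal J$ and reconfirms directness.

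Finally I would close the loop by proving the reverse inclusion $\mathcal J\subseteq\tilde{\mathcal J}$. Here I would use the decomposition just established together with the observation that $\mathfrak g/\tilde{\mathcal J}$ is a \emph{post-Lie} quotient in which, by construction, the two Lie brackets agree: modulo $\tilde{\mathcal J}$ we have $\llbracket a,b\rrbracket\equiv\lbc a,b\rbc$ on generators, so the induced GL-bracket on $\mathfrak g/\tilde{\mathcal J}$ coincides with the image of the Jacobi bracket. The anchor $\rho$ therefore factors through $\mathfrak g/\tilde{\mathcal J}$, giving a well-defined Lie morphism $\bar\rho:\mathfrak g/\tilde{\mathcal J}\to\mathcal V$; the point is to show $\bar\rho$ is injective, which yields $\mathcal J=\mop{Ker}\rho\subseteq\tilde{\mathcal J}$. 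Injectivity follows because the inclusion $\mathcal V\hookrightarrow\mathfrak g$ descends to a section of $\bar\rho$ that is also surjective: since $\mathcal V$ generates $\mathfrak g$ as a GL-Lie algebra and all GL-brackets of elements of $\mathcal V$ are congruent modulo $\tilde{\mathcal J}$ to elements of $\mathcal V$, the composite $\mathcal V\to\mathfrak g/\tilde{\mathcal J}$ is surjective, so $\bar\rho$ is a bijection with inverse induced by $\iota$. I expect the main obstacle to be the spanning/surjectivity bookkeeping in the last two paragraphs: one must carefully track that rewriting nested Jacobi brackets in terms of GL-brackets and curvature elements does not produce genuinely new obstructions beyond those already in $\tilde{\mathcal J}$, i.e.\ that the GL-bracket of two elements of $\mathcal V$ really does reduce modulo $\tilde{\mathcal J}$ to a single element of $\mathcal V$ plus curvature elements, and that this reduction is compatible with the $\mathcal R$-module structure so that the grading argument goes through over the ring $\mathcal R$ rather than merely over $\mathbb R$.
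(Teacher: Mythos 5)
Your proposal is correct and follows essentially the same route as the paper: the easy inclusion $\tilde{\mathcal J}\subseteq\mathcal J$ and the decomposition \eqref{gVJ} both come from $\rho$ being a GL-bracket morphism (by \eqref{rhoLie}) that restricts to the identity on $\mathcal V$, hence is a projection, and the hard inclusion $\mathcal J\subseteq\tilde{\mathcal J}$ rests on the same induction on bracket length, rewriting iterated GL-brackets of vector fields modulo $\tilde{\mathcal J}$ using \eqref{rs2} together with the GL-ideal property of $\tilde{\mathcal J}$. Your packaging of that induction as surjectivity of $\mathcal V\to\mathfrak g/\tilde{\mathcal J}$ followed by the section/left-inverse argument for $\bar\rho$ is an equivalent reformulation of the paper's statement that $u-\rho(u)\in\tilde{\mathcal J}$ for all $u$, i.e.\ that $\mathfrak g=\mathcal V+\tilde{\mathcal J}$.
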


\begin{proof}
The direct sum decomposition \eqref{gVJ} is immediate in view of $\mathcal V=\rho(\mathfrak{g})$ and $\mathcal J=(I-\rho)(\mathfrak{g})$. From \eqref{rhoLie} and \eqref{rs2}, we immediately get the inclusion $\tilde{\mathcal J} \subseteq \mathcal J$, as well as the fact that $\mathcal J$ is an ideal for the GL-bracket.

\medskip

Conversely, we use the grading $\mathfrak{g}=\oplus_{n \ge 0} \mathfrak{g}_n$ by the length of the iterated brackets. Looking at the definition of the GL-bracket, it is easy to show that $\mathfrak{g}^{(m)}:=\oplus_{i=1}^m \mathfrak{g}_i$ is the $\mathcal R$-linear span of iterated GL-brackets of length smaller or equal to $m$. By using Jacobi identity as many times as necessary, any such iterated bracket can be rewritten as a sum 
$$
	\sum_i \llbracket a_i,v_i \rrbracket
$$
with $a_i \in \mathcal V$ and $v_i \in \mathfrak{g}^{(n-1)}$. Suppose now that any element of $\mathfrak{g}^{(n-1)} \cap \mathcal J$ is in $\mathfrak{g}^{(n-1)} \cap \tilde{\mathcal J}$. This is trivial for $n-1=1$ and clear for $n-1=2$ in view of \eqref{rs2}. Considering any element $u=\sum_i \llbracket a_i,v_i \rrbracket \in \mathfrak{g}^{(n)}$ , we have 
\begin{align}
	u - \rho(u) 
	&= \sum_i \Big( \llbracket a_i,v_i \rrbracket - \lbc a_i,\rho(v_i)\rbc  \Big)\\
	&= \sum_i \llbracket a_i,v_i - \rho(v_i) \rrbracket 
		+  \sum_i \Big( \llbracket a_i,\rho(v_i) \rrbracket - \lbc a_i,\rho(v_i)\rbc  \Big)\\
	&= \sum_i \llbracket a_i,v_i - \rho(v_i) \rrbracket 
		+ \sum_i s\big(a_i.\rho(v_i)\big), 	
\end{align}
which proves $\mathcal J \subseteq \tilde{\mathcal J}$ by induction on $n$.
\end{proof}

\begin{proposition}
\label{prop:identities}
For any $u\in\mathfrak g$ and $b,c,d\in \mathcal V$ we have
\begin{eqnarray*}
	\big(u\rhd r(b.c)\big)(d)
	&=&\llbracket u,\,s(b.c)\rrbracket\rhd d,\\
	\big((u\rhd r)(b.c)\big)(d)
	&=&\Big(\llbracket u,\,s(b.c)\rrbracket-s\big((u\rhd b).c\big)-s\big(b.(u\rhd c)\big)\Big)\rhd d.
\end{eqnarray*}
\end{proposition}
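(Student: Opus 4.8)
The plan is to reduce both identities to the defining relation of the Grossman--Larson bracket together with the curvature identity \eqref{rs1}, once $u\rhd r$ is correctly read as the action of a derivation. The essential preliminary point is that, by Proposition \ref{prop:g-post-lie}, every $u\in\mathfrak g$ acts on $\mathcal A$ through a derivation $L^\rhd_u=u\rhd-$, since $\mathfrak g=\mathfrak d(\mathcal A)$; by Remark \ref{hcd-extend} this derivation extends to all the tensor modules attached to $\mathcal V$. Consequently the Leibniz rules for the covariant derivative of a tensor product and of an endomorphism hold not only in directions $u\in\mathcal V$ but in arbitrary directions $u\in\mathfrak g$, which is exactly what is needed to differentiate the type $(3,1)$ tensor $r$ along a general element of $\mathfrak g$.

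For the first identity I view $r(b.c)$ as the endomorphism $d\mapsto r(b.c)(d)=s(b.c)\rhd d$ of $\mathcal V$, using \eqref{rs1}. The Leibniz rule $(u\rhd\varphi)(s)=u\rhd\varphi(s)-\varphi(u\rhd s)$ for the covariant derivative of an endomorphism then gives
\[
	(u\rhd r(b.c))(d)=u\rhd\big(s(b.c)\rhd d\big)-s(b.c)\rhd(u\rhd d),
\]
and the right-hand side is precisely $\llbracket u,s(b.c)\rrbracket\rhd d$ by the defining property of the Grossman--Larson bracket recalled above. This settles the first formula.

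For the second identity I expand $(u\rhd r)(b.c)(d)$ by the full Leibniz rule for $r$ as a type $(3,1)$ tensor,
\[
	(u\rhd r)(b.c)(d)=u\rhd\big(r(b.c)(d)\big)-r\big((u\rhd b).c\big)(d)-r\big(b.(u\rhd c)\big)(d)-r(b.c)(u\rhd d).
\]
Grouping the first and last summands reconstitutes $(u\rhd r(b.c))(d)$, which I replace by $\llbracket u,s(b.c)\rrbracket\rhd d$ using the first identity, while the two middle summands equal $s\big((u\rhd b).c\big)\rhd d$ and $s\big(b.(u\rhd c)\big)\rhd d$ by \eqref{rs1}. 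Collecting the three contributions under the common operator $-\rhd d$ yields the announced expression.

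The only genuinely delicate step is justifying that these Leibniz rules are legitimate for $u$ ranging over all of $\mathfrak g$ rather than over $\mathcal V$ alone; once the derivation property of $L^\rhd_u$ on the relevant tensor modules is established, both computations are routine bookkeeping. I would therefore devote most of the write-up to making the extension of the covariant differential to $\mathfrak g$-directions precise and to checking its compatibility with the recursive $D$-algebra action of Section \ref{sec:covderiv}, after which the two displayed computations close the proof.
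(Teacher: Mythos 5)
Your proposal is correct and follows essentially the same route as the paper's proof: both identities come from the Leibniz rule for the covariant derivative of $r$ combined with \eqref{rs1} and the defining property $\llbracket u,v\rrbracket\rhd d = u\rhd(v\rhd d)-v\rhd(u\rhd d)$ of the Grossman--Larson bracket, the only cosmetic difference being that you expand the full four-term Leibniz rule for the $(3,1)$-tensor and then regroup two terms, where the paper applies the three-term rule for the endomorphism-valued $2$-tensor directly (the two are equivalent by the very definition of the induced connection on $\mop{Lin}(E,F)$). Your extra care in justifying the Leibniz rules for directions $u\in\mathfrak g$ rather than $u\in\mathcal V$ addresses a point the paper uses implicitly, and invoking Proposition \ref{prop:g-post-lie} together with Remark \ref{hcd-extend} (with an induction on bracket length, since commutators of pairing-derivations are again pairing-derivations) is the right way to settle it.
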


\begin{proof}
From Leibniz rule we have
\begin{eqnarray}
	u\rhd\big(s(b.c)\rhd d\big)
	&=&u\rhd\big(r(b.c)(d)\big)\nonumber\\
	&=&\big(u\rhd r(b.c)\big)(d)+r(b.c)(u\rhd d)\nonumber\\
	&=&\big(u\rhd r(b.c)\big)(d)+s(b.c)\rhd(u\rhd d).\label{rs-one}
\end{eqnarray}
Now, we have
\begin{equation}
\label{rs-two}
	u\rhd\big(s(b.c)\rhd d\big)-s(b.c)\rhd(u\rhd d)
	=\llbracket u, s(b.c)\rrbracket \rhd d.
\end{equation}
From \eqref{rs-one} and \eqref{rs-two} we get
$$
	\big(u\rhd r(b.c)\big)(d)=\llbracket u,s(b.c)\rrbracket\rhd d,
$$
which proves the first assertion. The second one comes from the first together with the Leibniz rule
\begin{eqnarray*}
	\big(u\rhd r)(b.c)\big)(d)
	&=&\big(u\rhd r(b.c)\big)(d)-r\big((u\rhd b).c\big)(d)-r(b.(u\rhd c)\big)(d)\\
	&=&\big(u\rhd r(b.c)\big)(d)-s\big((u\rhd b).c\big)\rhd d-s(b.(u\rhd c)\big)\rhd d.
\end{eqnarray*}
\end{proof}

\noindent Now let us consider the map
\begin{eqnarray*}
	\Phi:\mathfrak g		&\longrightarrow	& \mop{End}_{\mathbb R}(\mathcal V)\\
				u 	&\longmapsto		& u\rhd -,
\end{eqnarray*}
and introduce $\mathcal K:=\mop{Ker}\Phi$. Using earlier notation, $\Phi(u)=L_u^\rhd$.

\begin{proposition}
The restriction of $\Phi$ to $\mathcal J$ takes its values into $\mop{End}_{\mathcal R}(\mathcal V)$. Moreover, $\mathcal K$ is an ideal for the Grossman--Larson bracket, and we have the strict inclusions
$$
	\{0\}\subsetneq\mathcal K\subsetneq \mathcal J.
$$
\end{proposition}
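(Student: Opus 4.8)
The plan is to treat the three assertions—$\mathcal R$-linearity of $\Phi|_{\mathcal J}$, the ideal property of $\mathcal K$, and the chain of strict inclusions—separately, all resting on the single Leibniz-type identity
$$
	u\rhd(fZ)=(u\rhd f)\,Z+f\,(u\rhd Z),\qquad u\in\mathfrak g,\ f\in\mathcal R,\ Z\in\mathcal V,
$$
which expresses that each $u\in\mathfrak g=\mathfrak d(\mathcal A)$ acts on sections as a derivation over $\mathcal R$ with symbol $\rho(u)$, extending the connection Leibniz rule \eqref{Leibniz-connection} (Theorem \ref{higher-cov}, Remark \ref{hcd-extend}), and where $u\rhd f=\rho(u)(f)$. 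First I would prove the $\mathcal R$-linearity: if $u\in\mathcal J=\mop{Ker}\rho$ then $u\rhd f=\rho(u)(f)=0$ for all $f$, so the identity collapses to $u\rhd(fZ)=f\,(u\rhd Z)$, i.e.\ $\Phi(u)\in\mop{End}_{\mathcal R}(\mathcal V)$. The very same identity gives $\mathcal K\subseteq\mathcal J$: for $u\in\mathcal K$ both $u\rhd Z$ and $u\rhd(fZ)$ vanish (as $fZ\in\mathcal V$), whence $(u\rhd f)\,Z=0$ for every $Z\in\mathcal V$; choosing $Z$ non-vanishing shows $u\rhd f=0$ for all $f$, so $\rho(u)=0$ and $u\in\mathcal J$.

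Next, to see that $\mathcal K$ is a Grossman--Larson ideal I would use the defining relation $\llbracket v,u\rrbracket\rhd Z=v\rhd(u\rhd Z)-u\rhd(v\rhd Z)$ of the GL-bracket, for $v\in\mathfrak g$, $u\in\mathcal K$, $Z\in\mathcal V$. If $u\in\mathcal K$ the first term vanishes because $u\rhd Z=0$, and the second vanishes because $v\rhd Z\in\mathcal V$ (recall $\Phi(v)\in\mop{End}_{\mathbb R}(\mathcal V)$) and $u$ annihilates all of $\mathcal V$. Hence $\llbracket v,u\rrbracket\in\mathcal K$; since $\llbracket\,.\,,.\rrbracket$ is antisymmetric, $\mathcal K$ is two-sided.

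For the strict inclusions, the right-hand one is immediate: by Proposition \ref{prop:kernel} the curvature elements $s(a.b)$ lie in $\mathcal J$, while $s(a.b)\rhd c=r(a.b)(c)$ by \eqref{rs1}, so $s(a.b)\notin\mathcal K$ as soon as the curvature is non-zero—this is precisely where the presence of curvature forces $\mathcal K\subsetneq\mathcal J$. The left-hand inclusion $\{0\}\subsetneq\mathcal K$ requires a genuine element: no degree-one element can work, since $\mathcal K\cap\mathcal V\subseteq\mathcal J\cap\mathcal V=\{0\}$ by the decomposition \eqref{gVJ}, so the element must sit in degree $\ge 2$. Here I would feed the first Bianchi identity \eqref{bianchi-one}, rewritten in the post-Lie language as a cyclic relation among the operators $s(a.b)\rhd(-)$ and the torsion contributions, into the identities of Proposition \ref{prop:identities}, producing a cyclic combination $w$ of curvature elements $s$ (and of their covariant images) with $w\rhd d=0$ for all $d\in\mathcal V$, while its top free-Lie component does not cancel, so that $w\neq 0$ in $\mathfrak g$.

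The main obstacle is exactly this last step: writing down the explicit $w$ and verifying both that $\Phi(w)=0$—which is where the Bianchi identity enters, matching the cyclic curvature sum against the torsion terms so that the total action on every $d\in\mathcal V$ telescopes to zero—and that $w$ survives in the quotient $\mathfrak g=\widetilde{\mathfrak g}/(\mathcal C\cap\widetilde{\mathfrak g})$, i.e.\ that the defining relations of $\mathfrak g$ do not accidentally annihilate it. By contrast, the $\mathcal R$-linearity, the ideal property, the inclusion $\mathcal K\subseteq\mathcal J$, and the strictness $\mathcal K\subsetneq\mathcal J$ are short, purely formal manipulations with the Leibniz rule and the Grossman--Larson bracket, as sketched above.
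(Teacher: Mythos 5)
Your treatment of the first four assertions is correct and coincides with the paper's own proof: the $\mathcal R$-linearity of $\Phi|_{\mathcal J}$ and the inclusion $\mathcal K\subseteq\mathcal J$ are exactly the paper's Leibniz-rule arguments, the Grossman--Larson ideal property is the paper's observation $\Phi(\llbracket u,v\rrbracket)=[\Phi(u),\Phi(v)]$ written out term by term, and the strictness $\mathcal K\subsetneq\mathcal J$ is witnessed, as in the paper, by a curvature element $s(a.b)$, which acts on $\mathcal V$ by $r(a.b)$ and hence escapes $\mathcal K$ whenever the connection is not flat.

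The genuine gap is the remaining assertion $\{0\}\subsetneq\mathcal K$, which you explicitly defer as ``the main obstacle'': this is the only nontrivial point of the proposition, and moreover the tool you point to would not do the job. The first (algebraic) Bianchi identity \eqref{bianchi-one} cycles \emph{all three} of its arguments; in post-Lie form it reads
\[
	\oint_{abc} s(a.b)\rhd c=\oint_{abc}\Big((a\rhd t)(b,c)-t\big(a,t(b,c)\big)\Big),
\]
so the slot being acted upon is itself part of the cyclic sum, and no relation of the form $w\rhd d=0$ with $d$ a \emph{free} variable --- which is what membership of $w$ in $\mathcal K=\mop{Ker}\Phi$ requires --- can be factored out of it. The paper instead uses the \emph{second} (differential) Bianchi identity \eqref{bianchi-two}, whose fourth argument $W$ is free, combined with the second identity of Proposition \ref{prop:identities}, which converts each derivative-of-curvature term into the required shape, $\big((a\rhd r)(b.c)\big)(d)=\big(\llbracket a,s(b.c)\rrbracket-s((a\rhd b).c)-s(b.(a\rhd c))\big)\rhd d$. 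Summing this cyclically, equating it with $\oint_{abc}s\big(a.t(b.c)\big)\rhd d$ via \eqref{bianchi-two}, and using the skew-symmetry of $s$, the free variable $d$ factors out and one obtains the explicit element
\[
	w=\oint_{abc}\Big(\llbracket a,\,s(b.c)\rrbracket
	-s\big(a.t(b.c)\big)+s\big(a.(b\rhd c-c\rhd b)\big)\Big)\in\mathcal K,
\]
which condenses to $\oint_{abc}\big(\llbracket a,s(b.c)\rrbracket+s(a.\lbc b,c\rbc)\big)$. Your residual worry about nontriviality of such an element in $\mathfrak g$ is legitimate --- indeed its top, degree-three, free-Lie component cancels by the Jacobi identity, so nonvanishing must come from the lower-degree terms --- and the paper itself only asserts it; but since your sketch produces no explicit candidate at all, it does not establish $\mathcal K\neq\{0\}$, and the proposition remains unproved as written.
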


\begin{proof}
The first assertion is immediate from the Leibniz rule. From $\llbracket u,v\rrbracket\rhd a=u\rhd(v\rhd a)-v\rhd(u\rhd a)$ for any $u,v\in \mathfrak g$ and $a\in \mathcal V$ we get $\Phi(\llbracket u,v\rrbracket)=[\Phi(u),\,\Phi(v)]$ (bracket of operators on $\mathcal V$), hence $\mathcal K$ is an ideal for the Grossman--Larson bracket.

\smallskip

Now, for any $f\in \mathcal R$, $a\in \mathcal V$ and $u\in\mathcal K$ we have $u\rhd a=u\rhd fa=0$, therefore $u\rhd f=0$ by Leibniz rule, hence $\mathcal K\subset\mathcal J$. Moreover, for any $a,b\in \mathcal V$, the curvature element $s(a.b)$ belongs to $\mathcal J$, but has no reason to belong to $\mathcal K$ unless the connection is flat. Finally, from the second identity of Proposition \ref{prop:identities} and from the differential Bianchi identity \eqref{bianchi-two}, the expression
$$
	\llbracket a,s(b.c) \rrbracket+\llbracket b,s(c.a) \rrbracket
	+\llbracket c,s(a.b) \rrbracket 
	-s\big(a.t(b.c)\big)-s\big(b.t(c.a)\big)-s\big(c.t(a.b)\big)
$$
$$
	+s\big(a.(b\rhd c-c\rhd b)\big)+s\big(b.(c\rhd a-a\rhd c)\big)+s\big(c.(a\rhd b-b\rhd a)\big)
$$
defines a nontrivial element of $\mathcal K$, which can also be rewritten as
$$
	\llbracket a,s(b.c) \rrbracket+\llbracket b,s(c.a) \rrbracket
	+\llbracket c,s(a.b) \rrbracket +s\big(a.[b,c]\big)+s\big(b.[c,a]\big)+s\big(c.[a,b]\big) \in \mathcal K.
$$
\end{proof}

\subsection{Lie monomials}
\label{ssec:Liepoly}

We denote by $T_s^r(\mathcal V)$ the space of tensors of type $(r,s)$, namely 
$$
	T^r_s(\mathcal V):=\underbrace{\mathcal V \otimes _\mathcal{R}  \cdots \otimes _\mathcal{R} \mathcal V}_r
	\otimes _\mathcal{R} \underbrace{{\mathcal V}^* \otimes _\mathcal{R} \cdots \otimes _\mathcal{R} {\mathcal V}^*}_s,
$$
such that $T^1_0(\mathcal V):=\mathcal V$ and $T^0_0(\mathcal V):=\mathcal R$.

\begin{definition}
A Lie monomial of degree $n$ is a $\mathcal R$-linear map $\alpha:T^n_0(\mathcal V)\to\mathfrak g_n$ defined by an  iteration of Lie brackets. In particular, it is a tensor of type $(n,n)$.
\end{definition}

\noindent As an example, consider
$$
	\alpha(a.b.c.d.e):=\Big[[a,b],\, \big[c,[d,e]\big]\Big],
$$
which defines a Lie monomial of degree $5$. The following statement is straightforward and left to the reader:

\begin{proposition}
\label{lem:LieMon}
A degree $n$ Lie monomial $w \mapsto \alpha(w) \in \mathfrak{g}_n$ defines three tensors
\begin{align}
	\alpha &\in T^n_n(\mathcal V), \label{i}\\
	-t_\alpha&:=\rho \circ \alpha \in T^1_n(\mathcal V), \label{ii}\\
	R_\alpha&:=x_1\cdots x_{n+1}\mapsto \Big(\big((I-\rho)\circ \alpha\big)(x_1\cdots x_n)\Big)\rhd x_{n+1} 
	\in T^1_{n+1}(\mathcal V). \label{iii}
\end{align}
\end{proposition}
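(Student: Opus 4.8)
The plan is to reduce all three assertions to the single principle that, for the finitely generated projective $\mathcal R$-module $\mathcal V=\Gamma(T\mathcal M)$ (Serre--Swan), an $\mathcal R$-(multi)linear map out of a tensor power of $\mathcal V$ is the same datum as a tensor. Concretely, since $T^p_0(\mathcal V)=\mathcal V^{\otimes_{\mathcal R}p}$ and $(\mathcal V^{\otimes p})^*\cong(\mathcal V^*)^{\otimes p}$ for $\mathcal V$ finitely generated projective, one has canonical identifications
$$
	\mop{Lin}_{\mathcal R}\big(T^p_0(\mathcal V),T^q_0(\mathcal V)\big)
	\cong T^q_0(\mathcal V)\otimes_{\mathcal R}\big(T^p_0(\mathcal V)\big)^*
	\cong T^q_0(\mathcal V)\otimes_{\mathcal R}(\mathcal V^*)^{\otimes p}
	=T^q_p(\mathcal V).
$$
Thus in each case it suffices to exhibit the claimed object as an $\mathcal R$-linear map defined on the appropriate $T^\bullet_0(\mathcal V)$.

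For (i) and (ii) this is immediate. A Lie monomial is built by iterating the bracket $[.\,,.]$, which on $\mathfrak g\subseteq\mathcal A$ is the commutator of the $\mathcal R$-bilinear concatenation product, hence is itself $\mathcal R$-bilinear; iterating, $\alpha$ is $\mathcal R$-multilinear and so is a well-defined $\mathcal R$-linear endomorphism of $\mathcal V^{\otimes n}=T^n_0(\mathcal V)$ with image in $\mathfrak g_n\subseteq T^n_0(\mathcal V)$ (the degree-$n$ part of the free $\mathcal R$-Lie algebra embeds in the tensor algebra by Poincar\'e--Birkhoff--Witt, valid since $\mathcal R\supseteq\mathbb Q$ and $\mathcal V$ is projective). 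This gives $\alpha\in\mop{Lin}_{\mathcal R}\big(T^n_0(\mathcal V),T^n_0(\mathcal V)\big)=T^n_n(\mathcal V)$. For (ii), $\rho$ is $\mathcal R$-linear by Theorem \ref{higher-cov}, so $-t_\alpha=\rho\circ\alpha\colon T^n_0(\mathcal V)\to\mathcal V$ is $\mathcal R$-linear, i.e.\ an element of $\mop{Lin}_{\mathcal R}\big(T^n_0(\mathcal V),\mathcal V\big)=T^1_n(\mathcal V)$.

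Assertion (iii) is the substantial one, and I would organise it around the splitting $\mathfrak g=\mathcal V\oplus\mathcal J$ of Proposition \ref{prop:kernel}. Since $\rho$ restricts to the identity on $\mathcal V$, the operator $I-\rho$ is the $\mathcal R$-linear projection of $\mathfrak g$ onto $\mathcal J=\mop{Ker}\rho$; hence $u:=(I-\rho)\circ\alpha\colon T^n_0(\mathcal V)\to\mathcal J$ is $\mathcal R$-linear. I would then present $R_\alpha$ as the composite
$$
	T^{n+1}_0(\mathcal V)=T^n_0(\mathcal V)\otimes_{\mathcal R}\mathcal V
	\xrightarrow{\,u\otimes\mop{Id}\,}\mathcal J\otimes_{\mathcal R}\mathcal V
	\xrightarrow{\,\rhd\,}\mathcal V,
$$
so that everything reduces to showing that $\rhd$ is $\mathcal R$-\emph{bilinear} on $\mathcal J\times\mathcal V$ and hence descends to $\mathcal J\otimes_{\mathcal R}\mathcal V$. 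Left $\mathcal R$-linearity, $(fv)\rhd Z=f(v\rhd Z)$, holds already on $\mathcal A\times\mathcal V$ by Theorem \ref{higher-cov} together with Remark \ref{hcd-extend}; right $\mathcal R$-linearity, $v\rhd(gZ)=g(v\rhd Z)$ for $v\in\mathcal J$, is exactly the statement that $\Phi$ maps $\mathcal J$ into $\mop{End}_{\mathcal R}(\mathcal V)$, established in Subsection \ref{ssec:JandKideals}. Granting this, $R_\alpha$ is $\mathcal R$-linear on $T^{n+1}_0(\mathcal V)$ and therefore lies in $\mop{Lin}_{\mathcal R}\big(T^{n+1}_0(\mathcal V),\mathcal V\big)=T^1_{n+1}(\mathcal V)$.

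The only genuine obstacle is the right $\mathcal R$-linearity in the last slot, and it is precisely here that the projection onto $\mathcal J$ is indispensable: on the complementary summand $\mathcal V$ the product $\rhd$ is the bare covariant derivative, which by the Leibniz rule $\nabla_X(gZ)=(X.g)Z+g\nabla_XZ$ fails to be $\mathcal R$-linear in $Z$, the first-order term $(X.g)Z$ obstructing tensoriality. Discarding the $\mathcal V$-component through $I-\rho$ removes this term, while the discarded part is recorded separately as the torsion tensor $-t_\alpha=\rho\circ\alpha$ of (ii). In this way the argument also makes transparent the conceptual content of the statement: through $\mathfrak g=\mathcal V\oplus\mathcal J$, a Lie monomial decomposes into a torsion tensor in $T^1_n(\mathcal V)$ and a curvature tensor in $T^1_{n+1}(\mathcal V)$.
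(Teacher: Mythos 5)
Your proposal is correct. There is, however, no proof in the paper to compare it against: Proposition \ref{lem:LieMon} is explicitly stated as ``straightforward and left to the reader,'' so your argument simply supplies the omitted details. It does so cleanly and in the way the surrounding text suggests: parts (i) and (ii) are indeed pure $\mathcal R$-multilinearity bookkeeping (the bracket on $\mathfrak g$ is $\mathcal R$-bilinear and $\rho$ is $\mathcal R$-linear by Theorem \ref{higher-cov}), and you correctly isolate the only substantive point in (iii) --- $\mathcal R$-linearity in the last slot --- and settle it by precisely the fact the paper establishes just beforehand, namely that $\Phi$ maps $\mathcal J=\mop{Ker}\rho$ into $\mop{End}_{\mathcal R}(\mathcal V)$ (Subsection \ref{ssec:JandKideals}), the projection $I-\rho$ discarding the non-tensorial Leibniz term. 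Since Proposition \ref{prop:kernel} and the proposition on $\Phi$ both precede the statement in the paper, your use of them introduces no circularity.
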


Note the minus sign in \eqref{ii}, so that the definition matches torsion and curvature for $\alpha(a.b):=[a,b]$. For later use, we also define
\begin{equation}
	s_\alpha:=(I-\rho)\circ\alpha,
\end{equation}
so that
\begin{equation}
	R_\alpha(x_1\cdots x_{n+1})=s_\alpha(x_1\cdots x_n)\rhd x_{n+1}.
\end{equation}
Let us give the $\mathcal V \oplus \mathcal J$ decomposition of Lie monomials of low degrees:
\begin{itemize}
	\item Degree one: $\alpha=\mop{Id}_{\mathcal V}$ and the $\mathcal J$-part is equal to zero.
	\item Degree two: $\alpha(a.b)=[a,b]=\underbrace{s(a.b)}_{\in\mathcal J}-\underbrace{t(a.b)}_{\in \mathcal V}$. The torsion $t=\rho\circ\alpha$ belongs to $T_2^1(\mathcal V)$, the curvature $R_\alpha=R$ belongs to $T_3^1(\mathcal V)$.
\item Degree three:
$$
	\alpha(a.b.c)=\big[[a,b],c\big]=\underbrace{\llbracket s(a.b),
	c\rrbracket +s\big((c\rhd a).b\big)+s\big(a.(c\rhd b)\big) - s\big(t(a.b).c\big)}_{\in\mathcal J}
$$
\begin{equation}
\label{vj-degthree}
	\underbrace{-s(a.b)\rhd c+(c\rhd t)(a.b)+t\big(t(a.b).c\big)}_{\in \mathcal V}.
\end{equation}
\end{itemize}

\subsection{Special polynomials}
\label{ssec:specialpoly}

We use the language of operads here. Let us recall that a $\mathbb S$-module $\mathcal P$ is a collection $(\mathcal P_n)_{n\ge 0}$ of modules over some base commutative unital ring, together with a right action of the symmetric groupoid $\mathbb S=\bigsqcup_{n\ge 0} S_n$, i.e., a right action of the symmetric group $S_n$ on $\mathcal P_n$ for each $n\ge 0$. An operad is a $\mathbb S$-module $\mathcal P$ together with global compositions
$$	
	\gamma:\mathcal P_n\otimes\mathcal P_{k_1}\otimes\cdots\otimes \mathcal P_{k_n}
	\to {\mathcal P}_{k_1 + \cdots + k_n}
$$
functorial with respect to symmetric group actions, and subject to associativity and unitality axioms \cite{LV, Me}. We denote by $\mathcal P(\mathcal V)$ the operad of $\mathbb R$-multilinear maps\footnote{The standard notation in the literature is $\mop{End}\mathcal V$ or $\mop{Endop}\mathcal V$.} on $\mathcal V$, namely $\mathcal P_n(\mathcal V)=\mop{Hom}_{\mathbb R}({\mathcal V}^{\otimes n},\mathcal V)$. The symmetric groups act on the right by permuting the variables, and the compositions $\gamma$ are obviously defined. The unit for the composition is $\mop{Id}_{\mathcal V}\in\mathcal P_1(\mathcal V)$.\\

A monomial of degree (also named arity) $n\ge 0$ is a nonzero element of $\mathcal P_n(\mathcal V)$. A polynomial is a finite sum of monomials, possibly of different arities.

\begin{definition}$\strut$
\label{def-pol}
\begin{itemize}
	\item A geometrically special polynomial \cite{Gavrilov2008b} is a polynomial $\omega$ for which there exists a $\mathcal R$-linear map $\widetilde\omega:\mathcal A\to \mathcal V$ such that $\omega=\widetilde\omega\circ\pi$, where $\pi$ is the natural projection from $T_{\mathbb R}(\mathcal V)$ onto $\mathcal A$. 
	\item A special polynomial \cite{Gavrilov2006} is a polynomial made, by means of iterated compositions, of derivatives of torsion and curvature, possibly permuted.
	\item A polynomial of Lie type is a polynomial made, by means of iterated compositions, of derivatives of $t_\alpha$'s and $R_\alpha$'s (where $\alpha$ is a Lie monomial), possibly permuted.
\end{itemize}
\end{definition}

The corresponding sets are respectively denoted by $\mathcal P_{\mathcal R}(\mathcal V)$, $\mathcal S(\mathcal V)$ and $\mathcal P_{\smop{Lie}}(\mathcal V)$. It is obvious from Definition \ref{def-pol} above that those are three suboperads of $\mathcal P(\mathcal V)$. More precisely, $\mathcal S(\mathcal V)$ is the suboperad generated by $\{\nabla^nt,\nabla^nR,\,n\ge 0\}$, and $\mathcal P_{\smop{Lie}}(\mathcal V)$ is the suboperad generated by $\{\nabla^nt_\alpha,\nabla^nR_\alpha,\,n\ge 0\, , \alpha\hbox{ Lie monomial}\}$. For later use, we define an extended version:

\begin{definition}\label{def-pol-ext}
An extended geometrically special polynomial \cite{Gavrilov2008b} is a finite sum of monomials $\omega:\mathcal V^{\otimes n}\to\mathcal{DM}$ for which there exists a $\mathcal R$-linear map $\widetilde\omega:\mathcal V^{\otimes n}_{\mathcal R}\to \mathcal {DM}$ such that $\omega=\widetilde\omega\circ\pi$.
\end{definition}
The higher-order covariant derivative $\rho:\mathcal V^{\otimes n}\to\mathcal{DM}$ is an example of extended geometrically special monomial.\\

The inclusion $\mathcal S(\mathcal V)\subset \mathcal P_{\mathcal R}(\mathcal V)$ holds, i.e., any special polynomial is geometrically special, and the reciprocal is conjectured \cite[Section 7]{Gavrilov2006}. The inclusion $\mathcal P_{\smop{Lie}}(\mathcal V)\subset \mathcal P_{\mathcal R}(\mathcal V)$ is obvious from Proposition \ref{lem:LieMon}. We partly answer to Gavrilov's conjecture as follows:

\begin{theorem}
\label{thm:special}
$\mathcal P_{\smop{Lie}}(\mathcal V)= \mathcal S(\mathcal V)$.
\end{theorem}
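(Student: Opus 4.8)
The plan is to prove the two inclusions separately, $\mathcal{S}(\mathcal V)\subseteq\mathcal P_{\smop{Lie}}(\mathcal V)$ being the easy one. For the degree-two Lie monomial $\alpha_0(a.b):=[a,b]$ the computation recorded after Proposition \ref{lem:LieMon} gives $t_{\alpha_0}=t$ and $R_{\alpha_0}=R$, so the generators $\nabla^n t,\nabla^n R$ of $\mathcal S(\mathcal V)$ occur among the generators $\nabla^n t_\alpha,\nabla^n R_\alpha$ of $\mathcal P_{\smop{Lie}}(\mathcal V)$; since both are suboperads of $\mathcal P(\mathcal V)$, the inclusion follows.

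For the reverse inclusion I would first note that $\mathcal S(\mathcal V)$ is stable under covariant differentiation: differentiating a generator $\nabla^n t$ (resp.~$\nabla^nR$) yields $\nabla^{n+1}t$ (resp.~$\nabla^{n+1}R$), and by the Leibniz rule $\nabla$ of an operadic composition is a sum of compositions in which a single factor is differentiated, whence $\nabla\mathcal S(\mathcal V)\subseteq\mathcal S(\mathcal V)$. Therefore it suffices to show that $t_\alpha=-\rho\circ\alpha$ and $R_\alpha$ — equivalently the $\mop{End}_{\mathcal R}(\mathcal V)$-valued map $\Phi\circ s_\alpha$ with $s_\alpha=(I-\rho)\circ\alpha$ — lie in $\mathcal S(\mathcal V)$ for \emph{every} Lie monomial $\alpha$; as $\mathcal S(\mathcal V)$ is a $\nabla$-stable suboperad this implies $\mathcal P_{\smop{Lie}}(\mathcal V)\subseteq\mathcal S(\mathcal V)$.

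I would prove this by induction on the degree of $\alpha$, writing a degree-$n$ monomial ($n\ge 2$) as $\alpha=[\beta,\gamma]$, so that $\alpha(w)=[\beta(u),\gamma(v)]$ for a splitting $w=u\cdot v$ of the inputs. Decomposing $B:=\beta(u)$ and $C:=\gamma(v)$ along $\mathfrak g=\mathcal V\oplus\mathcal J$, I would expand $\rho\circ\alpha$ and $\Phi\circ s_\alpha$ using $[X,Y]=\llbracket X,Y\rrbracket-X\rhd Y+Y\rhd X$ and $\lbc X,Y\rbc=X\rhd Y-Y\rhd X-t(X.Y)$, together with the two morphism identities $\rho\llbracket X,Y\rrbracket=\lbc\rho X,\rho Y\rbc$ (Eq.~\eqref{rhoLie}) and $\Phi\llbracket X,Y\rrbracket=[\Phi X,\Phi Y]$. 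The $\rhd$-products of the four $\mathcal V$/$\mathcal J$-pieces are then rewritten via Proposition \ref{prop:identities}: a bracket $\llbracket u,v\rrbracket$ with $u\in\mathcal V,\ v\in\mathcal J$ becomes, under $\Phi$, the covariant derivative $\Phi(\llbracket u,v\rrbracket)=\nabla_u(\Phi(v))$ of the tensorial operator $\Phi(v)$, while $s(a.b)\rhd c=R(a.b.c)$ produces curvature. Since the inductive hypothesis gives $\rho B,\rho C\in\mathcal S(\mathcal V)$ and $\Phi s_\beta,\Phi s_\gamma\in\mathcal S(\mathcal V)$, every contribution in which the inner arguments are torsion/curvature values or the directions are special vector fields lands in $\mathcal S(\mathcal V)$ (differentiating a special tensor in a special direction is again a composition in $\mathcal S(\mathcal V)$).

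The main obstacle is the systematic cancellation of the terms that are \emph{not} special, namely torsion and curvature (and their derivatives) evaluated on \emph{bare} covariant derivatives $\nabla_{x_i}x_j$ of the inputs. These appear unavoidably from two sources: explicitly inside $s_\alpha$ as curvature elements with bare arguments — for instance the terms $s\big((c\rhd a).b\big)+s\big(a.(c\rhd b)\big)$ in the degree-three expansion \eqref{vj-degthree} — and implicitly when a covariant derivative is distributed by the Leibniz rule over $R(p.q.-)$, producing $R(\nabla p,q,-)+R(p,\nabla q,-)$. The whole point is that these two families occur with opposite signs and cancel, leaving only compositions of $\nabla^k t,\nabla^k R,t,R$; this is precisely the mechanism by which, in degree three, $\Phi\circ s_\alpha$ collapses to $R_\alpha(a.b.c.d)=-(\nabla_c R)(a,b,d)-R\big(t(a.b).c.d\big)$ and $-t_\alpha(a.b.c)=-R(a.b.c)+(\nabla_c t)(a,b)+t\big(t(a.b).c\big)$. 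Making this cancellation rigorous for arbitrary $\alpha$ is the technical heart of the argument, and I would formalise it by an inductive bookkeeping that matches, term by term, the bare-argument pieces of $s_\alpha$ against the Leibniz-correction pieces generated by $\Phi(\llbracket u,v\rrbracket)=\nabla_u(\Phi(v))$. Conceptually, the cancellation reflects that $\rho\circ\alpha$ and $R_\alpha$ are intrinsic tensors vanishing identically when the connection is flat and torsion-free, which forces them to be polynomials in torsion, curvature and their covariant derivatives.
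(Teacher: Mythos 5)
Your overall strategy -- reducing Theorem \ref{thm:special} to the claim that $t_\alpha$ and $R_\alpha$ are special for every Lie monomial $\alpha$, then inducting on the degree -- is the right one, and your base cases (degrees one, two, three via \eqref{vj-degthree}) and key identities (Proposition \ref{prop:identities}, the decomposition $\mathfrak g=\mathcal V\oplus\mathcal J$, Eq.~\eqref{rhoLie}) are exactly the ingredients needed. But the induction step contains a genuine gap: you split a degree-$n$ monomial as $\alpha=[\beta,\gamma]$ with \emph{both} factors arbitrary Lie monomials, and at the decisive moment you only assert that the non-special terms (torsion/curvature evaluated on bare derivatives $\nabla_{x_i}x_j$) cancel, deferring the proof to an ``inductive bookkeeping'' that is never set up. That cancellation \emph{is} the content of the theorem, so the proof is unfinished precisely where the work lies. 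Worse, for your general splitting the bookkeeping involves terms your expansion does not even enumerate: writing $B=\beta(u)=\rho B+s_\beta(u)$ and $C=\gamma(v)=\rho C+s_\gamma(v)$, the expansion of $[B,C]=\llbracket B,C\rrbracket-B\rhd C+C\rhd B$ produces the cross term $s_\beta(u)\rhd s_\gamma(v)$, a $\rhd$-product of two elements of $\mathcal J$. This is neither a curvature evaluation ($R_\beta$ requires a $\mathcal V$-argument in its last slot) nor an instance of your identity $\Phi\llbracket u,v\rrbracket=\nabla_u\Phi(v)$, and $\rho$ does not intertwine $\rhd$-products with data computable from $\rho B$, $\rho C$ alone: already $\rho\big(a\rhd[b,c]\big)=a\rhd\rho([b,c])+(a\rhd t)(b.c)$ picks up a $\nabla t$ correction. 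None of these contributions is addressed by your plan.

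The missing idea -- and it is how the paper closes exactly this gap -- is to normalise $\alpha$ \emph{before} expanding: by iterated use of the Jacobi identity, every Lie monomial of degree $n+1$ is a linear combination of monomials of the form $\alpha(x_1\cdots x_{n+1})=[x_j,\beta(x_1\cdots\widehat{x_j}\cdots x_{n+1})]$, where $x_j$ is a single input and $\beta$ is a Lie monomial of degree $n$. With one factor a bare vector field, the only products that ever occur are $\llbracket x_j,s_\beta(X)\rrbracket$ (covariant differentiation of the operator $\Phi\circ s_\beta$) and $s_\beta(\,\cdot\,)\rhd x$ with $x\in\mathcal V$ (i.e.\ $R_\beta$), so the two identities you quote suffice, and the Leibniz corrections you worry about assemble automatically into covariant derivatives of the induction-hypothesis tensors. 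Indeed, with $X:=x_1\cdots\widehat{x_j}\cdots x_{n+1}$ one finds
\begin{equation*}
	t_\alpha(x_1\cdots x_{n+1})=t\big(x_j.t_\beta(X)\big)+(x_j\rhd t_\beta)(X)-R_\beta(X.x_j),
	\qquad
	R_\alpha(x_1\cdots x_{n+2})=(x_j\rhd R_\beta)(X.x_{n+2})-R\big(x_j.t_\beta(X).x_{n+2}\big),
\end{equation*}
and every term on the right is special by the induction hypothesis. If you restructure your induction around this Jacobi reduction, your argument becomes a complete proof essentially identical to the paper's; otherwise you must genuinely carry out the term-by-term matching for $[\beta,\gamma]$, including the $\mathcal J\rhd\mathcal J$ contributions, which your proposal does not do.
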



\begin{proof}
The inclusion $\mathcal S(\mathcal V)\subset {\mathcal P}_{\smop{Lie}}(\mathcal V)$ is obvious. In order to show the reverse inclusion, it suffices to prove that $t_\alpha$ and $R_\alpha$ are special polynomials for any Lie monomial $\alpha$. We proceed by induction on the degree: the claim is obvious in degrees one and two, and for $\alpha(a.b.c)=\big[[a,b],c\big]$ we have from \eqref{vj-degthree}:
$$
	t_\alpha(a.b.c)=R(a.b.c)-(c\rhd t)(a.b)-t\big(t(a.b).c)\big)
$$
and
$$
	R_\alpha(a.b.c.d)=-(c\rhd R)(a.b.d)-R\big(t(a.b).c.d\big),
$$
which proves the claim. By an iterated use of the Jacobi identity, any Lie monomial of degree $n+1$ can be written as a linear combination of Lie monomials of the form
$$
	\alpha(x_1\cdots x_{n+1})=[x_j,\beta(x_1\cdots\widehat{x_j}\cdots x_{n+1})]
$$
where $\beta$ is a Lie monomial of degree $n$. For example, consider the equality
$$
	\big[[a,b],[c,d]\big]=-\Big[d,\big[[a,b],c\big]\Big]+\Big[c,\big[[a,b],d\big]\Big].
$$
We therefore compute, with $X:=x_1\cdots\widehat{x_j}\cdots x_{n+1}$:
\begin{eqnarray*}
	\lefteqn{\alpha(x_1\cdots x_{n+1})
	=[x_j,\beta(x_1\cdots\widehat{x_j}\cdots x_{n+1})]}\\
	&=&\llbracket x_j,\beta(X) \rrbracket-x_j\rhd \beta(X)+\beta(X)\rhd x_j\\
	&\stackrel{\eqref{Dtensor1}}{=}&\llbracket x_j,\beta(X) \rrbracket-\beta(x_j\rhd X)+\beta(X)\rhd x_j\\
	&=&-\llbracket x_j,t_\beta(X) \rrbracket+t_\beta(x_j\rhd X)-t_\beta(X)\rhd x_j\\
	&  & \quad +\llbracket x_j,s_\beta(X) \rrbracket-s_\beta(x_j\rhd X)+s_\beta(X)\rhd x_j\\
	&=&-\llbracket x_j,t_\beta(X) \rrbracket+\lbc x_j,t_\beta(X)\rbc+t_\beta(x_j\rhd X)-t_\beta(X)\rhd x_j
-\lbc x_j,t_\beta(X)\rbc\\
	& & \quad +\llbracket x_j,s_\beta(X) \rrbracket-s_\beta(x_j\rhd X)+s_\beta(X)\rhd x_j\\
	&=&-s\big(x_j.t_\beta(X)\big)-t\big(x_j.t_\beta(X)\big)-(x_j\rhd t_\beta)(X)\\
	& & \quad +\llbracket x_j,s_\beta(X) \rrbracket-s_\beta(x_j\rhd X)+s_\beta(X)\rhd x_j
\end{eqnarray*}
from which we get
\begin{equation*}
	t_\alpha(x_1\cdots x_{n+1})=t\big(x_j.t_\beta(X)\big)+(x_j\rhd t_\beta)(X)-R_\beta(X.x_j)
\end{equation*}
and
\begin{eqnarray*}
	R_\alpha(x_1\cdots x_{n+2})
	&=&\Big(\llbracket x_j,s_\beta(X) \rrbracket-s\big(x_j.t_\beta(X)\big)-s_\beta(x_j\rhd X)\Big)\rhd x_{n+2}\\
	&=&x_j\rhd\big(s_\beta(X)\rhd x_{n+2}\big)-s_\beta(X)\rhd(x_j\rhd x_{n+2})\\
	&&-R(x_j.t_\beta(X).x_{n+2})-R_\beta\big((x_j\rhd X.x_{n+2})\big)\\
	&=&x_j\rhd R_\beta(X.x_{n+2})-R_\beta\big(X.(x_j\rhd x_{n+2})\big)\\
	&&-R(x_j.t_\beta(X).x_{n+2})-R_\beta\big((x_j\rhd X).x_{n+2}\big)\\
	&=&(x_j\rhd R_\beta)(X.x_{n+2})-R_\beta\big((x_j\rhd X).x_{n+2}\big),
\end{eqnarray*}
which ends up the induction step and therefore proves the result.
\end{proof}

\section{Gavrilov's double exponential}
\label{sec:double-exp}

Gavrilov's double exponential \cite{Gavrilov2006} is a formal series in two indeterminates, $t$ and $s$, without constant term, which can be explicitly written as follows:
\begin{equation}
\label{eq:double-exp}
	q_*(tv, sw)
	= \beta^{-1} \Big(\text{BCH}\Big\{\beta(tv),\, \beta\big(s\lambda(tv,w)\big)\Big\}\Big).
\end{equation}
Here $v$ and $w$ are two vector fields on a smooth manifold $\mathcal M$ endowed with an affine connection $\nabla$. The notation $\mathrm{BCH}$ refers to the usual Baker--Campbell--Hausdorff series in the completed Lie algebra 
$$
	\overline {\mathcal V}=(t\mathcal{XM}[[s,t]]+s\mathcal{XM}[[s,t]],\lbc.\,,.\rbc),
$$
and $\beta$ stands for Gavrilov's $\beta$-map described earlier in Section \ref{sect:framed-pl}. The map $\lambda$ was introduced in Paragraph \ref{par:Z}. The double exponential \eqref{eq:double-exp} can be informally described in geometrical terms as follows: starting from a point $x \in \mathcal M$ in the direction given by the vector field $v$ at $m$ and following the geodesic $\exp_x^\nabla t'v(x)$ up to time $t'=t$, one reaches the point $y=\exp_x^\nabla t v(x) \in \mathcal M$. Let $W(y)$ be the vector field $w$ at $x$ parallel-transported to the point $y$. Following the geodesic $\exp_y^\nabla t'W(y)$ up to time $t'=s$, one reaches a third point $z=\exp_y^\nabla sW(y)$ on $\mathcal M$. Gavrilov's double exponential permits to express this point following a geodesic starting from $x \in \mathcal  M$
$$
	z=\exp_x^\nabla q_*(tv, sw)(x) \in \mathcal M.
$$
\vskip 3mm
\centerline{\includegraphics[scale=0.5]{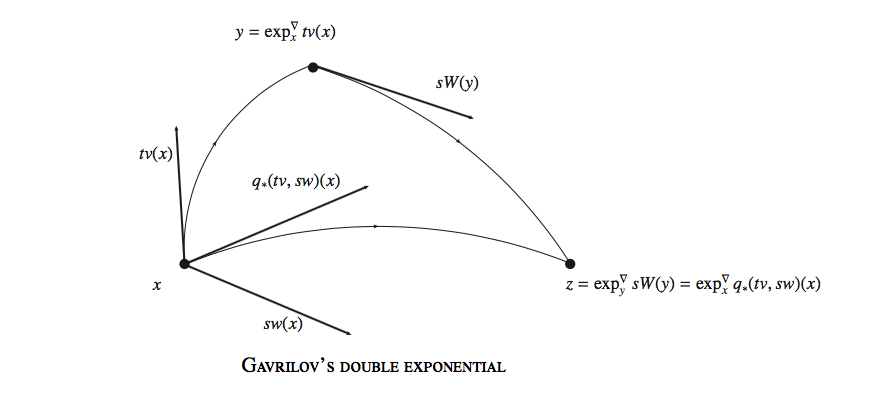}}

\subsection{Heuristic approach}

Let us briefly outline how Formula \eqref{eq:double-exp} can be heuristically obtained from this geometric description. Any tangent vector $u$ at any point $x \in \mathcal M$ gives rise to a vector field on $\mathcal M$ (at least on a sufficiently small neighborhood of $x$) by parallel-transporting $u$ at any point $x'$ along the unique geodesic joining $x$ to $x'$. We denote somewhat abusively by $\beta(u)$ this vector field, and we denote its flow by $\exp t\beta(u)$ or $\exp\beta(tu)$. Let $\delta$ be the unique tangent vector at $x$ such that $x'=\exp^\nabla_x(\delta)$. We denote by $\lambda(\delta,u)$ the parallel transport of $u$ at $x'$ along the geodesic $t\mapsto \exp^\nabla_m(t\delta)$. The picture above can be read as the composition of two flows:
$$
	\exp\beta(tv)\exp\beta(sW)=\exp\beta\big(q_*(tv,sw)\big),
$$
hence
\begin{equation}\label{qbeta}
	\exp\beta(tv)\exp\beta\big(\lambda(tv,sw) \big)=\exp\beta\big(q_*(tv,sw)\big),
\end{equation}
which gives \eqref{eq:double-exp}. The value of the series $q_*(tv,sw)$ at point $x$ indeed depends only on the values at $x$ of the two vector fields $v$ and $w$ \cite[Proposition 4]{Gavrilov2006}. We'll give in Subsection \ref{ssect:double-exp-bis} our own proof of this crucial fact (Remark \ref{rmk:final}).

\subsection{Another expression of the double exponential}\label{ssect:double-exp-bis}

As before, let $\mathcal R=C^\infty(\mathcal M)$, let $\mathcal V=(\mathcal{XM},\rhd,\lbc.\,,.\rbc)$ be the framed Lie algebra of vector fields, and let $(\mathfrak g,\rhd,[.\,,.])=\mop{Lie}_{\mathcal R}(\mathcal V)$ be the post-Lie algebra defined in Paragraph \ref{tc-postLie}. Let $\widehat{\mathcal U(\mathfrak g)}$ be the completion of the enveloping algebra of $\mathfrak g$, endowed with both associative products $\cdot$ and the GL-product, $*$.

\begin{proposition}\label{prop:hans}
Let $v,w\in \mathfrak g$, and let $\widetilde w\in\widehat{\mathfrak g}$ such that $(\exp^{\textstyle\cdot} v)\rhd \widetilde w= w$. Then the following holds in $\widehat{\mathcal U(\mathfrak g)}$:
\begin{equation}
\label{postLieBCHrel}
	\exp^{\textstyle\cdot} v*\exp^{\textstyle\cdot} \widetilde w=\exp^{\textstyle\cdot} z^{\textstyle\cdot}(v,w)
\end{equation}
with
$$
	z^{\textstyle\cdot}(v,w)=\mop{BCH}^{\textstyle\cdot}(v,w)=v+w+\frac 12[v,w]+\frac 1{12}\big[[v,w],w-v\big]+\cdots
$$
\end{proposition}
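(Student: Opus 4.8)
The plan is to reduce the twisted Grossman--Larson product on the left-hand side of \eqref{postLieBCHrel} to an ordinary concatenation product of exponentials, and then invoke the classical Baker--Campbell--Hausdorff theorem in the enveloping algebra $(\mathcal U(\mathfrak g),\cdot)$ of the Lie algebra $(\mathfrak g,[.\,,.])$. All the work lies in the first reduction; the BCH step is then automatic.

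First I would record three structural facts. Since $v\in\mathfrak g$ is primitive for $\Delta_\shuffle$, the element $g:=\exp^{\textstyle\cdot} v$ is group-like, so $\Delta_\shuffle(g)=g\otimes g$, $\epsilon(g)=1$, and its antipode in the Grossman--Larson Hopf algebra is $S_*(g)=g^{*-1}$, the inverse of $g$ for the product $*$ (which exists in the completion). Next, by the iterated Leibniz rule \eqref{Dtensor1-iterated}, any group-like $h$ acts through $\rhd$ as a unital algebra endomorphism of $(\mathcal U(\mathfrak g),\cdot)$, because $h\rhd(VW)=(h\rhd V)(h\rhd W)$ and $h\rhd\mathbf 1=\mathbf 1$ (the latter holds since $h$ is group-like). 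In particular $h$ preserves primitivity and commutes with the exponential series, so $h\rhd\exp^{\textstyle\cdot} w=\exp^{\textstyle\cdot}(h\rhd w)$.

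The core computation starts from Theorem \ref{thm:productantipode}. Applying \eqref{productnew} with $A=g$ and $B=\exp^{\textstyle\cdot} w$, and using group-likeness of $g$ together with $S_*(g)=g^{*-1}$ and the morphism property above, I obtain
\begin{equation*}
	\exp^{\textstyle\cdot} v\cdot\exp^{\textstyle\cdot} w
	=g*\big(g^{*-1}\rhd\exp^{\textstyle\cdot} w\big)
	=g*\exp^{\textstyle\cdot}\big(g^{*-1}\rhd w\big).
\end{equation*}
It then remains to identify $g^{*-1}\rhd w$ with $\widetilde w$. This follows from the defining relation $(\exp^{\textstyle\cdot} v)\rhd\widetilde w=w$, i.e. $g\rhd\widetilde w=w$, together with the associativity relation \eqref{product2}:
\begin{equation*}
	g^{*-1}\rhd w
	=g^{*-1}\rhd\big(g\rhd\widetilde w\big)
	=\big(g^{*-1}*g\big)\rhd\widetilde w
	=\mathbf 1\rhd\widetilde w
	=\widetilde w.
\end{equation*}
Hence $\exp^{\textstyle\cdot} v*\exp^{\textstyle\cdot}\widetilde w=\exp^{\textstyle\cdot} v\cdot\exp^{\textstyle\cdot} w$.

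Finally, since $(\mathcal U(\mathfrak g),\cdot,\Delta_\shuffle)$ is the enveloping algebra of $(\mathfrak g,[.\,,.])$ and $v,w$ are primitive, the classical Baker--Campbell--Hausdorff theorem yields $\exp^{\textstyle\cdot} v\cdot\exp^{\textstyle\cdot} w=\exp^{\textstyle\cdot}\big(\mop{BCH}^{\textstyle\cdot}(v,w)\big)$, with $\mop{BCH}^{\textstyle\cdot}(v,w)=v+w+\frac 12[v,w]+\frac1{12}\big[[v,w],w-v\big]+\cdots$ (one checks the third-order term agrees with the symmetric form $\frac1{12}([v,[v,w]]+[w,[w,v]])$). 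Combining with the previous identity gives \eqref{postLieBCHrel} with $z^{\textstyle\cdot}(v,w)=\mop{BCH}^{\textstyle\cdot}(v,w)$. The main technical point to handle carefully is the passage to the completion $\widehat{\mathcal U(\mathfrak g)}$: one must verify that $\exp^{\textstyle\cdot} v$ and $\exp^{\textstyle\cdot}\widetilde w$ are genuinely well-defined group-like elements there, that $g^{*-1}$ exists, and that the $\rhd$-action is continuous with respect to the grading so that it really commutes with the infinite exponential series. Everything else is formal manipulation with the two Hopf-algebra structures on $\mathcal U(\mathfrak g)$.
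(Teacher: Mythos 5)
Your proof is correct, and it is essentially the paper's argument run in the opposite direction. The paper expands the left-hand side directly: group-likeness of $g=\exp^{\textstyle\cdot}v$ and the definition \eqref{glp} of the Grossman--Larson product give $g*\exp^{\textstyle\cdot}\widetilde w=g\cdot\big(g\rhd\exp^{\textstyle\cdot}\widetilde w\big)$, the automorphism property of $L^{\rhd}_{g}$ on $(\widehat{\mathcal U(\mathfrak g)},\cdot)$ turns this into $g\cdot\exp^{\textstyle\cdot}\big(g\rhd\widetilde w\big)=g\cdot\exp^{\textstyle\cdot}w$ (the hypothesis $(\exp^{\textstyle\cdot}v)\rhd\widetilde w=w$ is used exactly here, with no inversion needed), and classical BCH finishes. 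You instead start from $g\cdot\exp^{\textstyle\cdot}w$, convert it to a Grossman--Larson product via the antipode formula \eqref{productnew} of Theorem \ref{thm:productantipode}, and then must undo the twist by verifying $g^{*-1}\rhd w=\widetilde w$ through \eqref{product2}; all of this is sound --- $S_*(g)=g^{*-1}$ for group-like $g$, and the inverse of a group-like element is again group-like, so it acts as an algebra morphism --- but it re-derives in two steps what the definition of $*$ gives in one, since \eqref{productnew} is itself proved in the paper from \eqref{glp} and \eqref{product2}. Both routes rest on the same two facts (group-likeness of the exponentials, and that group-like elements act through $\rhd$ as unital algebra automorphisms, hence commute with the exponential series) and the same final appeal to classical BCH in the enveloping algebra of $(\mathfrak g,[.\,,.])$, so nothing is gained or lost mathematically; the paper's version is simply shorter because the hypothesis is stated in precisely the form consumed by the forward expansion. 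Your closing remark about checking well-definedness in the completion is a legitimate point that the paper treats implicitly, as all operations involved respect the grading.
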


\begin{proof}
Using that $\exp^{\textstyle\cdot} v$ is grouplike for the coproduct $\Delta_\shuffle$ and that $L_{\exp^{\textstyle\cdot} v}^\rhd=(\exp^{\textstyle\cdot} v)\rhd -$ is an automorphism for the product $\cdot$, we get
\begin{eqnarray*}
	\exp^{\textstyle\cdot} v*\exp^{\textstyle\cdot} \widetilde w
	&=&\exp^{\textstyle\cdot} v\cdot\big((\exp^{\textstyle\cdot} v)\rhd \exp^{\textstyle\cdot} \widetilde w\big)\\
	&=&\exp^{\textstyle\cdot} v\cdot \Big(\exp^{\textstyle\cdot}\big((\exp^{\textstyle\cdot} v)\rhd \widetilde w\big)\Big)\\
	&=&\exp^{\textstyle\cdot} v\cdot\exp^{\textstyle\cdot} w\\
	&=&\exp^{\textstyle\cdot} z^{\textstyle\cdot}(v,w).
\end{eqnarray*}
\end{proof}

\noindent From \eqref{betachirho}, $\beta=\rho\circ\chi\restr{\overline{\mathcal V}}$, and \eqref{qbeta} we get
\begin{equation}
	\rho\Big(\exp^*\chi(tv)*\exp^*\chi\big(\lambda(tv,sw)\big)\Big)=\rho\Big(\exp^*\chi\big(q_*(tv,sw)\big)\Big),
\end{equation}
hence
\begin{equation}
	\rho\Big(\exp^{\textstyle\cdot}(tv)*\exp^{\textstyle\cdot}\big(\lambda(tv,sw)\big)\Big)
	=\rho\Big(\exp^{\textstyle\cdot}\big(q_*(tv,sw)\big)\Big).
\end{equation}
\noindent From Proposition \ref{prop:hans} together with \eqref{lambda}, saying that $\lambda(tv,sw)=\exp^{\ast} (-\chi(tv))\rhd sw$, we therefore get
\begin{equation}
\label{local}
	\rho\Big(\exp^{\textstyle\cdot}\big(z^{\textstyle\cdot}(tv,sw)\big)\Big)
	=\rho\Big(\exp^{\textstyle\cdot}\big(q_*(tv,sw)\big)\Big).
\end{equation}
This in turn yields
\begin{equation}
	\chi\big(z^{\textstyle\cdot}(tv,sw)\big)=\chi\big(q_*(tv,sw)\big) \hbox{ modulo }\mathcal J.
\end{equation}
Bearing in mind that $q_*(tv,sw)$ belongs to $\overline {\mathcal V}$ contrarily to $z^{\textstyle\cdot}(tv,sw)$, and using \eqref{betachirho} again, we finally get

\begin{theorem}
\label{thm:q}
\begin{equation*}
	q_*(tv,sw)=\beta^{-1}\Big(\rho\circ\chi\big(z^{\textstyle\cdot}(tv,sw)\big)\Big).
\end{equation*}
\end{theorem}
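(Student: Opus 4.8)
The plan is to assemble the identities established just before the statement and to exploit the bijectivity of $\beta$. The starting point is Equation \eqref{local}, namely $\rho\big(\exp^{\textstyle\cdot}(z^{\textstyle\cdot}(tv,sw))\big)=\rho\big(\exp^{\textstyle\cdot}(q_*(tv,sw))\big)$, which itself rests on the geometric relation \eqref{qbeta}, on Proposition \ref{prop:hans}, and on the identity \eqref{lambda} writing $\lambda(tv,sw)=\exp^{\ast}(-\chi(tv))\rhd sw$. The first thing I would stress is that $\rho$ is an algebra morphism for the Grossman--Larson product $\ast$ (sending it to composition of differential operators), \emph{not} for the concatenation product $\textstyle\cdot$; the natural way to evaluate $\rho$ on a $\textstyle\cdot$-exponential is therefore to convert it via the post-Lie Magnus expansion. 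Using \eqref{exp=exp} one has $\exp^{\textstyle\cdot}(x)=\exp^{\ast}(\chi(x))$, and since $\rho$ respects $\ast$ this gives $\rho\big(\exp^{\textstyle\cdot}(x)\big)=\exp^{\circ}\big(\rho(\chi(x))\big)$, where $\exp^{\circ}$ denotes the exponential in $\widehat{\mathcal{DM}}$ for the composition of operators.

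Applying this to both arguments of \eqref{local} turns it into $\exp^{\circ}\big(\rho(\chi(z^{\textstyle\cdot}))\big)=\exp^{\circ}\big(\rho(\chi(q_*))\big)$. Because $v$ and $w$ carry no constant term, the same holds for $q_*$ and $z^{\textstyle\cdot}$, so both $\rho(\chi(z^{\textstyle\cdot}))$ and $\rho(\chi(q_*))$ lie in the augmentation ideal of $\widehat{\mathcal{DM}}$, where the composition-exponential is injective; taking logarithms therefore yields $\rho\big(\chi(z^{\textstyle\cdot}(tv,sw))\big)=\rho\big(\chi(q_*(tv,sw))\big)$. Since $\mathcal J=\mop{Ker}\rho$ by Proposition \ref{prop:kernel}, this is exactly the congruence $\chi(z^{\textstyle\cdot})\equiv\chi(q_*)\pmod{\mathcal J}$ recorded in the text.

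To conclude I would invoke that $q_*(tv,sw)$ belongs to $\overline{\mathcal V}$, on which $\beta=\rho\circ\chi$ by \eqref{betachirho}. Hence $\beta(q_*)=\rho(\chi(q_*))=\rho(\chi(z^{\textstyle\cdot}))$, where the last equality is the identity obtained above. As $\beta$ is a bijection of $\overline{\mathcal V}$ (Remark \ref{beta-g}), applying $\beta^{-1}$ produces $q_*(tv,sw)=\beta^{-1}\big(\rho\circ\chi(z^{\textstyle\cdot}(tv,sw))\big)$, which is the asserted formula.

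The step requiring the most care is the bookkeeping around the two different completions: $z^{\textstyle\cdot}(tv,sw)$ lives in $\widehat{\mathfrak g}$ and \emph{not} in $\overline{\mathcal V}$, so $\rho\circ\chi$ is applied to it as a genuine map on $\widehat{\mathfrak g}$, whereas $\beta^{-1}$ only inverts $\rho\circ\chi$ on $\overline{\mathcal V}$. The identity is legitimate precisely because $\rho(\chi(z^{\textstyle\cdot}))$ equals $\beta(q_*)$, an element that does lie in the image of $\beta$; verifying that this common value lands in $\overline{\mathcal V}$ (it does, as $\rho$ takes values in $\mathcal V$) and that the passage from equal exponentials to equal logarithms is valid are the two points where one must be attentive.
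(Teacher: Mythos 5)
Your proposal is correct and follows essentially the same route as the paper: starting from \eqref{local}, converting the $\textstyle\cdot$-exponentials via $\exp^{\textstyle\cdot}=\exp^{\ast}\circ\,\chi$ and the $\ast$-morphism property of $\rho$, taking logarithms to obtain $\rho\circ\chi\big(z^{\textstyle\cdot}(tv,sw)\big)=\rho\circ\chi\big(q_*(tv,sw)\big)$ (the congruence modulo $\mathcal J$), and then using $q_*(tv,sw)\in\overline{\mathcal V}$ together with $\beta=\rho\circ\chi\restr{\overline{\mathcal V}}$ and the bijectivity of $\beta$. In fact you make explicit two points the paper leaves implicit --- the injectivity of the composition-exponential on the augmentation ideal and the domain bookkeeping between $\widehat{\mathfrak g}$ and $\overline{\mathcal V}$ --- which only strengthens the argument.
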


\begin{remark}\label{rmk:final}\rm
As already proved by Gavrilov (\cite[main Theorem]{Gavrilov2007} and \cite[Section 4]{Gavrilov2006}), for any $x \in \mathcal M$, the (formal) tangent vector $q_*(tv,sw)(x)$ depends only on the two tangent vectors $v(x)$ and $w(x)$. Our interpretation of this fact in the post-Lie framework is the following: observe that the expression $\rho\big(\exp^{\textstyle\cdot}\big(z^{\textstyle\cdot}(tv,sw)\big)\big)$ is geometrically special in the sense of Definition \ref{def-pol-ext}. In other words, for any function $f \in \mathcal{R}$, the expression $\rho\big(\exp^{\textstyle\cdot}\big(z^{\textstyle\cdot}(tv,sw)\big)\big)f(x)$ depends on the vector fields $v$ and $w$ through $v(x)$ and $w(x)$ alone. Identity \eqref{local} then implies the same for $\rho\big(\exp^{\textstyle\cdot}\big(q_*(tv,sw)\big)\big)f(x)$. As $q_*(tv,sw)$ is a (formal) vector field, we have that
\begin{equation}
\label{a1}
	\rho\big(\exp^{\textstyle\cdot}\big(q_*(tv,sw)\big)\big)f(x) 
	= \rho\big(\exp^{\ast}(Q )\big)f(x)=\mathrm{Exp}(Q)f(x),
\end{equation}
where $Q\in\overline{\mathcal V}$ is the unique geodesic formal vector field such that $Qf(x)=q_*(tv,sw)f(x)$ for any function $f$ in $\mathcal{R}$, and where $\mop{Exp}(Q)\in\mathcal{DM}[[s,t]]$ stands for its formal flow. The geodesic property of $Q$ is expressed as
\begin{equation}
\label{geodesic}
	Q\rhd Q=0,
\end{equation}
from which we get
\begin{equation}\label{geodesic-b}
\exp^{\ast}(Q )=\exp^{\textstyle\cdot}(Q).
\end{equation}
Here we used \eqref{exprelation1} together with the fact that \eqref{geodesic} implies that the inverse post-Lie Magnus expansion reduces to the identity map. From \eqref{local}, \eqref{a1} and \eqref{geodesic-b}, the evaluation $\mathrm{Exp}(Q)f(x)=\rho\big(\exp^{\textstyle\cdot}(Q )\big)f(x)$ of the formal flow at $x$ depends on $v$ and $w$ only through $v(x)$ and $w(x)$. The same is therefore true for
$$
	Qf(x)=\rho\big(\log^{\textstyle\cdot}\exp^{\textstyle\cdot}(Q )\big)f(x).
$$
we finally deduce from $Qf(x)=q_*(tv,sw)f(x)$, that $q_*(tv,sw)f(x)$ depends on $v$ and $w$ through $v(x)$ and $w(x)$ only. 
\end{remark}

\section*{Conclusion}
\label{conclusion}

In this work, we have explored Gavrilov's results in \cite{Gavrilov2006,Gavrilov2008,Gavrilov2008b,Gavrilov2010} from the post-Lie algebra perspective, thus showing the advocating the important role of this notion in differential geometry. This approach should be relevant in even broader contexts, such as Lie algebroids \cite{munthe2020invariant} and Lie--Rinehart algebras \cite{FMKM2021}, their algebraic counterparts.


\appendix

\section{Proofs of Section 2}
\label{app:proofs}

The final computation of the proof of Theorem \ref{thm:Dtensor}:
\allowdisplaybreaks
\begin{align*}
	\lefteqn{\big((XY-YX)U\big)\rhd V=\big(X(YU)\big)\rhd V-\big(Y(XU)\big)\rhd V}\\
	&=X\rhd(YU\rhd V)-(X\rhd YU)\rhd V -(X\leftrightarrow Y)\\
	&=X\rhd\big(Y\rhd(U\rhd V)\big)-X\rhd\big((Y\rhd U)\rhd V\big)\\
	&\hskip 5mm -\big((X\rhd Y)U\big)\rhd V-\big(Y(X\rhd U)\big)\rhd V -(X\leftrightarrow Y)\\
	&=X\rhd\big(Y\rhd(U\rhd V)\big)-X\rhd\big((Y\rhd U)\rhd V\big)\\
	&\hskip 5mm -(X\rhd Y)\rhd(U\rhd V)+\big((X\rhd Y)\rhd U\big)\rhd V\\
	&\hskip 5mm -Y\rhd\big((X\rhd U)\rhd V\big)+\big(Y\rhd(X\rhd U)\big)\rhd V -(X\leftrightarrow Y)\\
	&=XY\rhd(U\rhd V)-YX\rhd(U\rhd V)-\big((XY-YX)\rhd U\big)\rhd V\\
	&=\mathrm{a}_\rhd(XY-YX, U,V),
\end{align*}
which proves Theorem \ref{thm:Dtensor}.

\smallskip

The induction in the proof of Proposition \ref{magma-deshuffle} is given here. The length zero case is trivial and the length one case is the coderivation property mentioned above. Supposing $U=x\cdot U'$ we compute, using the induction hypothesis:
\allowdisplaybreaks
\begin{align*}
	\lefteqn{\Delta_\shuffle(U\rhd V)=\Delta_\shuffle(x U'\rhd V)}\\
	&=\Delta_\shuffle\big(x \rhd (U' \rhd V)-(x \rhd U') \rhd V\big)\\
	&=(x\otimes 1+1\otimes x) \rhd \Delta_\shuffle(U \rhd V) 
		- \Delta_\shuffle(x \rhd U') \rhd \Delta_\shuffle(V)\\
	&=x \rhd (U'_{(1)} \rhd V_{(1)}) \otimes U'_{(2)} \rhd V_{(2)}
		+U'_{(1)} \rhd V_{(1)} \otimes x \rhd (U'_{(2)} \rhd V_{(2)})\\
	&\hskip 8mm -(x\rhd U'_{(1)}) \rhd V_{(1)} \otimes U'_{(2)}\rhd V_{(2)}
		- U'_{(1)}\rhd V_{(1)} \otimes (x \rhd U'_{(2)})\rhd V_{(2)}\\
	&=x U'_{(1)}\rhd V_{(1)}\otimes U'_{(2)}\rhd V_{(2)}
		+U'_{(1)}\rhd V_{(1)}\otimes xU'_{(2)}\rhd V_{(2)}\\
	&=U_{(1)}\rhd V_{(1)}\otimes U_{(2)}\rhd V_{(2)},
\end{align*}
which proves Proposition \ref{magma-deshuffle}.

\smallskip

The induction in the proof of Proposition \ref {prop:pre-glp} is given here. The case $\ell=1$ is just a reformulation of \eqref{Dtensor2}. For $\ell\ge 2$ we can suppose $U=xU'$ where $x\in M$ and where $U'$ is a monomial of length $\ell-1$. We compute
\allowdisplaybreaks
\begin{align*}
\lefteqn{U\rhd(V\rhd W)=xU'\rhd(V\rhd W)}\\
&\stackrel{\eqref{Dtensor2}}{=}x\rhd\big(U'\rhd(V\rhd W)\big) - (x\rhd U')\rhd(V\rhd W)\\
&=x\rhd\Big(\big(U'_{(1)}(U'_{(2)}\rhd V)\big)\rhd W\Big) 
	-(x\rhd U')\rhd(V\rhd W) \hbox{ (from induction)}\\
&\stackrel{\eqref{Dtensor2}}{=}\Big(x U'_{(1)}(U'_{(2)}\rhd V)
	+x\rhd\big(U'_{(1)} (U'_{(2)}\rhd V\big) \\
	&\hskip 8mm  -(x\rhd U')_{(1)}\big((x\rhd U')_{(2)}\rhd V\big)\Big)\rhd W
 \hbox{ (from induction again)}\\
&\stackrel{\eqref{Dtensor1}}{=}\Big(x U'_{(1)} (U'_{(2)}\rhd V)+U'_{(1)} \big(x\rhd(U'_{(2)}\rhd V)\big) - (x\rhd U'_{(2)})\rhd V\Big)\rhd W\\
&\hskip 8mm \hbox{ (from the fact that $L_x$ is a coderivation)}\\
&=\Big(x U'_{(1)} (U'_{(2)}\rhd V)+U'_{(1)} (x U'_{(2)}\rhd V)\Big)\rhd W\\
&=\big(U_{(1)}(U_{(2)}\rhd V)\big)\rhd W,
\end{align*}
which yields \eqref{Dtensor2-iter} and therefore proves Proposition \ref{prop:pre-glp}.

\section{Planar multi-grafting}
\label{ssec:multigrafting}

We use the left grafting representation here (see Remark \ref{lg}). We have $T\big((\mop{Mag}(A),\rhd)\big) = (\mathcal F^{pl}_A,\rhd)$, where $\mathcal F^{pl}_A$ is the linear span of ordered forests of planar rooted trees, and $\rhd$ is extended by means of \eqref{Dtensor1} and \eqref{Dtensor2}. Recall that any planar rooted tree $\tau \in T^{pl}_A$ with the root decorated by $a \in A$ can be written in terms of the so-called $B^a_+$-operator, that is, $\tau= B^a_+[\tau_1 \cdots \tau_n]$, for $\tau_1 \cdots \tau_n \in \mathcal F^{pl}_A$. It adds a root decorated by $a \in A$ and connects it via an edge to every root in the forest $\tau_1 \cdots \tau_n$. For example, denoting the empty tree by $1$, we have
$$
	\Forest{[a]}=B^a_+[1],
	\quad
	\Forest{[a[b]]}=B^a_+[\Forest{[b]}],
	\quad
	\Forest{[a[c[b]]]}=B^a_+[\Forest{[c[b]]}],
	\quad
	\Forest{[a[c][b]]}=B^a_+[\Forest{[c]}\Forest{[b]}].
$$  

We will now consider a multivariate extension of the grafting operation by defining the following brace operations
\begin{equation}
\label{multigraft1}
	\rhd^{\scriptscriptstyle{n+1}}: T_{n+1}\big(\mop{Mag}(A)\big) \times \mop{Mag}(A) \to \mop{Mag}(A).
\end{equation}
Here, $T_{k}\big(\mop{Mag}(A)\big)$ denotes the $k$-th component in the tensor algebra. The multi-grafting in \eqref{multigraft1} is recursively defined for $\tau_1, \tau_2 \in T^{pl}_A$ and a planar forest $\omega$ of length $n$ by 
\begin{equation}
\label{multigraft2}
	(\tau_1 \omega) \rhd^{\scriptscriptstyle{n+1}}\tau_2 
	:= \tau_1 \rhd (\omega \rhd^{\scriptscriptstyle{n}} \tau_2)
		- (\tau_1 \rhd \omega) \rhd^{\scriptscriptstyle{n}} \tau_2.
\end{equation}
\noindent In the case of $\tau_1,\ldots,\tau_n \in T^{pl}_A$ and $\tau_2 =\Forest{[a]} $ this simplifies to
$$
	(\tau_1 \cdots \tau_n) \rhd^{\scriptscriptstyle{n}} \Forest{[a]} 
	= (\tau_1 \cdots \tau_n) \rhd^{\scriptscriptstyle{n}} B_+^a[1]  
	= B_+^a[\tau_1 \cdots \tau_n]. 
$$
Rule \eqref{multigraft2} can be summarised combinatorially as follows: graft the trees $\tau_1, \ldots, \tau_n$ in all possible ways onto the tree $\tau$, (i) excluding the grafting of any $\tau_i$ onto any $\tau_j$ and (ii) when grafting several trees,  $\tau_{j_1}, \ldots, \tau_{j_k}$, $1\le j_1 < \cdots < j_{k} \le n$, onto a vertex $v$ of $\tau$, then they must be grafted to the left of the leftmost edge going out from the vertex $v$ of $\tau$ in such a way, that the order among those trees is preserved. As an example, we consider 
\allowdisplaybreaks
\begin{align*}
	\sigma \tau \rhd^{\scriptscriptstyle{2}} \,\Forest{[c[a][b]]}
	= \ &\Forest{[c[\sigma][\tau][a][b]]}
	+ \Forest{[c[\sigma][a[\tau]][b]]} +\Forest{[c[\sigma][a][b[\tau]]]}
	+ \Forest{[c[\tau][a[\sigma]][b]]} +\Forest{[c[a[\sigma][\tau]][b]]}+ \Forest{[c[a[\sigma]][b[\tau]]]}
	+ \Forest{[c[\tau][a][b[\sigma]]]}+\Forest{[c[a[\tau]][b[\sigma]]]}+\Forest{[c[a][b[\sigma][\tau]]]}.
\end{align*}

\medskip


\end{document}